\newtheorem{theorem}{Theorem}[section]
\newtheorem{lemma}[theorem]{Lemma}
\newtheorem{proposition}[theorem]{Proposition}
\newtheorem{definition}[theorem]{Definition}
\newtheorem{remark}[theorem]{Remark}
\newtheorem{hypothesis}[theorem]{Hypothesis}
\let\originalleft\left
\let\originalright\right
\renewcommand{\left}{\mathopen{}\mathclose\bgroup\originalleft}
\renewcommand{\right}{\aftergroup\egroup\originalright}
\newcommand{\Tr}{\mathop{\mathrm{Tr}}}
\renewcommand{\d}{\/\mathrm{d}\/}
\def\w{\textbf{W}^{\varepsilon}_{{\theta}^{\varepsilon}}}
\def\L{\mathbb{L}}
\def\A{\mathrm{A}}
\def\I{\mathrm{I}}
\def\F{\mathrm{F}}
\def\C{\mathrm{C}}
\def\f{\boldsymbol{f}}
\def\B{\mathrm{B}}
\def\D{\mathrm{D}}
\def\X{\mathbb{X}}
\def\x{\boldsymbol{x}}
\def\h{\boldsymbol{h}}
\def\z{\boldsymbol{z} }
\def\v{\boldsymbol{v}}
\def\w{\boldsymbol{w}}
\def\W{\mathrm{W}}
\def\N{\mathbb{N}}
\def\V{\mathbb{V}}
\def\u{\mathrm{U}}
\def\P{\mathrm{P}}
\def\u{\boldsymbol{u}}
\def\H{\mathbb{H}}
\newcommand{\R}{\mathbb{R}}
\renewcommand{\d}{\/\mathrm{d}\/}
\newcommand{\Addresses}{{
		\footnote{
			\noindent \textsuperscript{1}School of Mathematical Sciences, Guizhou Normal University, Guiyang 550001, P.R. China.

			\noindent \textsuperscript{2,3}Department of Mathematics, Indian Institute of Technology Roorkee-IIT Roorkee,
			Haridwar Highway, Roorkee, Uttarakhand 247667, INDIA.
			\par\nopagebreak
			\noindent  \textit{e-mail:} \texttt{Manil T. Mohan: \href{maniltmohan@ma.iitr.ac.in}{maniltmohan@ma.iitr.ac.in}, \href{maniltmohan@gmail.com}{maniltmohan@gmail.com}.}
			
			\textit{e-mail:} \texttt{Renhai Wang: \href{rwang-math@outlook.com}{rwang-math@outlook.com}.}
			
			\textit{e-mail:} \texttt{Kush Kinra: \href{kkinra@ma.iitr.ac.in}{kkinra@ma.iitr.ac.in}.}
			
			\noindent \textsuperscript{*}Corresponding author.
			
			\textit{Key words:} Pullback random attractor, Asymptotic autonomy, stochastic Navier-Stokes equations, backward tempered set, time-semi-uniform asymptotic compactness, backward uniform-tail estimate, backward flattening estimate.
			
			Mathematics Subject Classification (2020): Primary 37L55; Secondary 37B55, 35B41, 35B40.

}}}
\begin{document}
	
	\title[Asymptotically autonomous robustness of random attractors for 2D SNSE]{Asymptotically autonomous robustness in probability of random attractors for  stochastic Navier-Stokes equations on unbounded Poincar\'e domains
		\Addresses}
	
	\author[R. Wang, K. Kinra  and M. T. Mohan]
	{Renhai Wang\textsuperscript{1}, Kush Kinra\textsuperscript{2} and Manil T. Mohan\textsuperscript{3*}}

	\maketitle
	
	\begin{abstract}
The asymptotically autonomous robustness of random attractors of stochastic fluid equations defined on \emph{bounded} domains has been considered in the literature. In this article,  we initially consider this topic (almost surely and in probability) for a non-autonomous stochastic 2D Navier-Stokes equation driven by additive and multiplicative noise defined on some \emph{unbounded Poincar\'e domains}. There are two significant keys to study this topic: what is the asymptotically autonomous limiting set of the time-section of random attractors as time goes to negative infinity, and how to show the precompactness of a time-union of random attractors over an \emph{infinite} time-interval $(-\infty,\tau]$. We guess and prove that such a limiting set is just determined by the random attractor of a stochastic Navier-Stokes equation driven by an autonomous forcing satisfying a convergent condition. The  uniform ``tail-smallness'' and ``flattening effecting'' of the solutions are derived in order to justify that the usual asymptotically compactness of the solution operators is \emph{uniform} over $(-\infty,\tau]$. This in fact leads to the precompactness of the time-union of random attractors over $(-\infty,\tau]$. The idea of uniform tail-estimates due to Wang \cite{UTE-Wang} is employed to overcome the noncompactness of Sobolev embeddings on unbounded domains. Several rigorous calculations are given to deal with the pressure terms when we derive these uniform tail-estimates.
	\end{abstract}
	

	\section{Introduction} \label{sec1}\setcounter{equation}{0}
\subsection{Statement of problems}
The well-posedness and
global/pullback/exponential /trajectory/random attractors of 2D Navier-Stokes
equations defined on bounded domains have been well-discussed in the literature, see  \cite{Ball1997JNS,FSX,FMRT,GZ,MS,SS,R.Temam} and many others. The existence of deterministic and random attractors of a Navier-Stokes equation defined on the whole space $\mathbb{R}^2$ is an interesting and challenging open problem. Motivated by several interesting works such as Caraballo, Lukaszewicz and Real \cite{CLR,CLR1}, Brz\'ezniak et.al. \cite{BCLLLR,BL}, Gu, Guo and Wang \cite{GGW},
 and others, one can, however, try these kind of analysis on unbounded Poincar\'e domains. By a Poincar\'e  domain, we mean a domain in which the Poincar\'e inequality is satisfied.  A typical example of unbounded Poincar\'e domains in $\mathbb{R}^2$ is $\mathcal{O}=\R\times(-L,L)$ with $L>0$, see Temam \cite[p.306]{R.Temam} and Robinson \cite[p.117]{Robinson2}.

		\begin{hypothesis}\label{assumpO}
			Let $\mathcal{O}$ be an open, connected and unbounded subset of $\R^2$, the boundary of which is uniformly of class $\mathrm{C}^3$ (see \cite{Heywood}). We  assume that, there exists a positive constant $\lambda $ such that the following Poincar\'e inequality  is satisfied:
			\begin{align}\label{poin}
				\lambda\int_{\mathcal{O}} |\psi(x)|^2 \d x \leq \int_{\mathcal{O}} |\nabla \psi(x)|^2 \d x,  \ \text{ for all } \  \psi \in \H^{1}_0 (\mathcal{O}).
			\end{align}
		\end{hypothesis}
	
This work is devoted to the study of asymptotically autonomous robustness of random attractors
for a 2D non-autonomous stochastic Navier-Stokes
fluid defined on the unbounded Poincar\'e domain $\mathcal{O}$:
		\begin{equation}\label{1}
			\left\{
			\begin{aligned}
				\frac{\partial \u}{\partial t}-\nu \Delta\u+(\u\cdot\nabla)\u+\nabla p&=\boldsymbol{f}+S(\u)\circ\frac{\d \W}{\d t}, \ \ \  \text{ in }\  \mathcal{O}\times(\tau,\infty), \\ \nabla\cdot\u&=0, \hspace{30mm}  \text{ in } \ \ \mathcal{O}\times(\tau,\infty), \\ \u&=0,\hspace{30mm}  \text{ in } \ \ \partial\mathcal{O}\times(\tau,\infty), \\
				\u(x,\tau)&=\u_0(x),\hspace{23mm} x\in \mathcal{O} \ \text{ and }\ \tau\in\R,
			\end{aligned}
			\right.
		\end{equation}
		where $\tau\in\mathbb{R}$, $\u(x,t) \in \R^2$, $p(x,t)\in\R$ and $\f(x,t)\in \R^2$ denotes the velocity field, pressure and external forcing, respectively, the positive constant $\nu$ is known as the \emph{kinematic viscosity} of the fluid, the term $S(\u)$ is referred as the \textbf{diffusion  coefficient} of the noise, and it is either independent of $\u$, that is, $S(\u)=\h\in\D(\A)$ (additive noise) or equal to $\u$ (multiplicative noise), the symbol $\circ$ means that  the stochastic integral is understood in the sense of Stratonovich, $\W=\W(t,\omega)$ is an one-dimensional two-sided Wiener process defined on a standard  probability space $(\Omega, \mathscr{F}, \mathbb{P})$, and $\D(\A)$ is the domain of the Stokes operator.

By the asymptotically autonomous robustness of random attractors, we mean that the time-section of random attractors is robust (or stable) in terms of the Hausdorff semi-distance of the energy space as the time-parameter goes to negative infinity.

\subsection{Literature survey}
The theories and applications of global/pullback/exponential /trajectory/ attractors of deterministic dynamical systems can be referred to some prominent works of Hale and Raugel \cite{Hale1,Hale2}, Temam \cite{R.Temam}, Robinson \cite{Robinson2,Robinson1}, Ball \cite{Ball}, Chueshov and Lasiecka, \cite{Chueshov1,Chueshov2}, Chepyzhov and Vishik  \cite{CV2}, Caraballo, Lukaszewicz and Real \cite{CLR,CLR1}, Carvalho, Langa and Robinson \cite{CLR2} and many others. In order to capture the long-time dynamics of stochastic equations driven  by uncertain forcing, attractors of deterministic dynamical systems were extended to be random attractors of random dynamical systems, see Arnold \cite{Arnold}, Brze\'zniak, Capi\'nski and Flandoli \cite{BCF}, Crauel, Debussche and Flandoli \cite{CDF,CF} and Schmalfu{\ss} \cite{Schmalfussr}. Since evolution equations arriving from physics and other fields of science are often driven by stochastic and non-autonomous forcing simultaneously, random attractors of autonomous random dynamical systems are generalized by Wang \cite{SandN_Wang} under the framework of non-autonomous random dynamical systems. In light of these theoretical results, random attractor have been extensively investigated in \cite{BCLLLR,FY,GGW,GLW,KM3,KM7,
LG,PeriodicWang,rwang1,rwang2}, etc. for autonomous and non-autonomous stochastic equations.  As it is well known in the literature that  to study pathwise attractors, one needs to convert a stochastic system into a pathwise deterministic one, which is possible for additive or linear multiplicative noise. Very recently, a new concept called mean random attractors of mean random dynamical system was proposed by Wang \cite{Wang} in order to study the long-term behavior of solutions of stochastic It\^{o} evolution equations driven by nonlinear noise. We mention that the existence of mean random attractors of stochastic It\^{o} Navier-Stokes equations driven by  nonlinear noise has been studied by Wang \cite{Wang1}, see \cite{Chenzhang2,KM4,Wang4,Wang8,Wang9}, etc. for other stochastic models.

\subsection{Motivations, conditions and  main results}		
It is well known that the time dependence of forcing term reflects the non-autonomous feature of evolution systems. This could be the most important feature distinguishing from autonomous evolution systems. Intuitively, if the non-autonomous forcing term $\f(x,t)$ in \eqref{1} converges asymptotically to an autonomous forcing term in some sense, then the non-autonomous dynamics of \eqref{1} become more and more autonomous. In such a  case we call this phenomenon asymptotically autonomous dynamics of \eqref{1}. Our main motivation is to investigate the asymptotically autonomous robustness of random attractors of  \eqref{1} driven by additive and multiplicative noise when $\f$ and  $\h$ satisfy the following conditions:
		\begin{hypothesis}\label{Hypo_f-N}
 $\f\in\mathrm{L}^{2}_{\emph{loc}}
(\R;\L^2(\mathcal{O}))$ converges to a time-independent function $\f_{\infty}\in\L^2(\mathcal{O})$:
			\begin{align*}
				\lim_{\tau\to -\infty}\int^{\tau}_{-\infty}
\|\f(t)-\f_{\infty}\|^2_{\L^2(\mathcal{O})}\d t=0.
			\end{align*}
	\end{hypothesis}

\begin{hypothesis}\label{AonH-N}
There exists a constant ${\aleph}>0$ such that
$\h\in\D(\A)$ satisfies
	\begin{align*}
		\bigg|
\sum_{i,j=1}^2\int_{\mathcal{O}}\u_i(x)
\frac{\partial \h_j(x)}{\partial x_i}\u_j(x)\d x\bigg|\leq {\aleph}\|\u\|^2_{\mathbb{L}^2(\mathcal{O})}, \ \ \forall\ \u\in\L^2(\mathcal{O}).
	\end{align*}
\end{hypothesis}

\begin{remark}
	Hypothesis \ref{Hypo_f-N} implies the following conditions (see Caraballo et al. \cite{CGTW}):
	\begin{itemize}
		\item Uniformness condition:
		\begin{align}\label{G3}
			&\sup_{s\leq \tau}\int_{-\infty}^{s}e^{\kappa(r-s)} \|\f(t)\|^2_{\L^2(\mathcal{O})}\d r<+\infty,  \ \mbox{$\forall\ \kappa>0$, $\tau\in\mathbb{R}$}.
		\end{align}
		\item The tails  of the forcing $\f$ are backward-uniformly small:
		\begin{align}\label{f3-N}
			\lim_{k\rightarrow\infty}\sup_{s\leq \tau}\int_{-\infty}^{s}e^{\kappa(r-s)}
			\int_{\mathcal{O}\cap\{|x|\geq k\}}|\f(x,r)|^{2}\d x\d r=0,  \ \mbox{$\forall\  \kappa>0$, $\tau\in\mathbb{R}$.}
		\end{align}
	\end{itemize}
\end{remark}

\begin{remark}
\begin{itemize}	\item [(i)] An example of Hypothesis \ref{Hypo_f-N} is
	$\f(x,t)=\f_\infty(x)e^t+\f_\infty(x)$ with $\f_{\infty}\in\L^2(\mathcal{O})$.
	\item[(ii)] An example of $\h=(h_1,h_2)$ in Hypothesis \ref{AonH-N} is any $\h\in\D(\A)$  with
	$\sup\limits_{x\in\mathcal{O}}\left|
	\frac{\partial h_j(x)}{\partial x_i}\right|<\infty,$
	$i,j=1,2$.
	\item[(iii)] Hypothesis \ref{AonH-N} is only used for
	additive noise case, where $S(\u)=\h\in\D(\A)$.
	\item[(iv)] Hypothesis \ref{Hypo_f-N} is used to prove that the solutions of \eqref{1} is
	asymptotically autonomous
	in $\L^2(\mathcal{O})$.
\end{itemize}
\end{remark}

With these conditions, we are able to state our main results, which reveal the asymptotically autonomous dynamics of \eqref{1}.
\begin{theorem}\label{MT1-N}
(\texttt{Additive noise case})
Under Hypotheses \ref{assumpO}-\ref{AonH-N}, the non-autonomous random dynamical system $\Phi$ generated by \eqref{1} with $S(\u)=\h$ has a
unique pullback random attractor
$\mathcal{A}
    =\{\mathcal{A}(\tau,\omega):\tau\in\mathbb{R},
    \omega\in\Omega\}$ such that
$\bigcup_{s\in(-\infty,\tau]}\mathcal{A}
(s,\omega)$ is precompact in $\L^2(\mathcal{O})$ and  $$\lim_{t \to +\infty} e^{- \gamma t}\sup_{s\in(-\infty,\tau]
}\|\mathcal{A}(s-t,\theta_{-t} \omega ) \|_{\L^2(\mathcal{O})} =0,$$ for any
$\gamma>0$, $\tau\in \mathbb{R}$ and  $\omega\in\Omega$.  In addition, the time-section  $\mathcal{A}(\tau,\omega)$ is asymptotically
autonomous robust in $\L^2(\mathcal{O})$, and the limiting set of $\mathcal{A}(\tau,\omega)$ as $\tau\rightarrow-\infty$ is just determined
by the random attractor $\mathcal{A}_{\infty}=
\{\mathcal{A}(\omega):
    \omega\in\Omega\}$ of a stochastic Navier-Stokes equation \eqref{A-SNSE} with the autonomous forcing $\f_\infty$, that is,
	\begin{align}\label{MT2-N}
		\lim_{\tau\to -\infty}\mathrm{dist}_{\L^2(\mathcal{O})}
(\mathcal{A}(\tau,\omega),
\mathcal{A}_{\infty}(\omega))=0, \ \ \forall\ \omega\in\Omega.
	\end{align}
Furthermore, we also justify the
\texttt{asymptotically autonomous robustness in probability}:
\begin{align}\label{MT3-N}
\lim_{\tau\to -\infty}\mathbb{P}\Big(\omega\in\Omega:\mathrm{dist}_{\L^2(\mathcal{O})}
(\mathcal{A}(\tau,\omega),
\mathcal{A}_{\infty}(\omega))\geq\delta\Big){=0},\ \ \ \forall\ \delta>0.
	\end{align}
\end{theorem}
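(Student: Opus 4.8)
The plan is to run the standard three-step programme for non-autonomous random dynamical systems (conjugation, uniform a priori estimates, asymptotic compactness), but with every estimate made \emph{backward-uniform} in the initial time $s\in(-\infty,\tau]$, so that the attractor inherits the time-semi-uniform properties asserted. First I would remove the noise: since $\h$ is deterministic the Stratonovich and It\^o forms coincide, and with the one-dimensional Ornstein--Uhlenbeck process $\z(\theta_t\omega)$ solving $\d\z+\z\,\d t=\d\W$ the substitution $\v=\u-\h\z$ turns \eqref{1} into a pathwise random Navier--Stokes system for $\v$ with the extra lower-order terms $\nu\z\A\h$, $\z(\h\cdot\nabla)\v$, $\z(\v\cdot\nabla)\h$, $\z^2(\h\cdot\nabla)\h$ and $\z\h$. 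Well-posedness of this system by the usual 2D energy/compactness method gives a non-autonomous RDS $\Phi$ on $\L^2(\mathcal{O})$; note $\langle\z(\h\cdot\nabla)\v,\v\rangle=0$ because $\h$ is divergence free, while $|\langle\z(\v\cdot\nabla)\h,\v\rangle|\le|\z|\,\aleph\,\|\v\|^2_{\L^2(\mathcal{O})}$ by Hypothesis~\ref{AonH-N}, which is exactly what makes the stretching term tractable.

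Second, testing the $\v$-equation with $\v$, using the Poincar\'e inequality \eqref{poin}, the bound just mentioned, the uniformness condition \eqref{G3} on $\f$, and Gronwall together with the temperedness of $t\mapsto\z(\theta_t\omega)$, I would produce a backward-tempered pullback absorbing set $\mathcal{D}=\{D(\tau,\omega)\}$ whose radius is \emph{backward-uniformly} bounded, i.e.\ $\sup_{s\le\tau}\|D(s,\omega)\|_{\L^2(\mathcal{O})}<\infty$ for every $\tau,\omega$. Undoing the conjugation transfers this to $\u$; once the attractor is known to sit in $\mathcal{D}$, the exponential-growth assertion $\lim_{t\to+\infty}e^{-\gamma t}\sup_{s\le\tau}\|\mathcal{A}(s-t,\theta_{-t}\omega)\|_{\L^2(\mathcal{O})}=0$ follows from the temperedness of $\mathcal{D}$ and of $\z$.

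Third --- the heart of the argument --- I would prove \emph{time-semi-uniform asymptotic compactness}: for any $s_n\le\tau$, $t_n\to\infty$ and $\v_{0,n}\in D(s_n-t_n,\theta_{-t_n}\omega)$, the set $\{\Phi(t_n,s_n-t_n,\theta_{-t_n}\omega)\v_{0,n}\}$ is precompact in $\L^2(\mathcal{O})$, with all bounds independent of $s_n\in(-\infty,\tau]$. I would split $\mathcal{O}=(\mathcal{O}\cap\{|x|\le k\})\cup(\mathcal{O}\cap\{|x|\ge k\})$. On the exterior part I would run Wang's uniform tail-estimate \cite{UTE-Wang}: test the $\v$-equation against $\v\,\rho(|x|^2/k^2)$ for a smooth cut-off $\rho$ and control the convective and stretching terms; the delicate point on a domain with boundary is the pressure contribution $\int_{\mathcal{O}}p\,\v\cdot\nabla\big(\rho(|x|^2/k^2)\big)\,\d x$, which I would handle by extracting an $\L^2$-bound for $p$ over exterior annuli from the pressure Poisson equation $-\Delta p=\nabla\cdot((\u\cdot\nabla)\u)-\nabla\cdot\f-\ldots$ and elliptic regularity, using the backward-uniform energy bound, the vanishing tails of $\h,\A\h$ (as $\h\in\D(\A)$), and the backward uniform tail condition \eqref{f3-N}. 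This gives $\limsup_{k\to\infty}\sup_{s\le\tau}\|\v(\cdot)\|_{\L^2(\mathcal{O}\cap\{|x|\ge k\})}=0$ along the absorbed trajectories. On the bounded part I would use the backward flattening estimate --- project $\v$ onto high Stokes (or Fourier) modes, show by a second energy estimate that the high-mode energy is backward-uniformly small, and invoke compactness of the finite-dimensional low-mode space. Combining the two pieces gives the precompactness; together with the backward-uniformly bounded absorbing set, the abstract existence theorem for pullback random attractors \cite{SandN_Wang} yields $\mathcal{A}$, uniqueness being the standard minimality argument, and the time-semi-uniformity upgrades (via $\mathcal{A}(s,\omega)=\Phi(t,s-t,\theta_{-t}\omega)\mathcal{A}(s-t,\theta_{-t}\omega)\subseteq\Phi(t,s-t,\theta_{-t}\omega)D(s-t,\theta_{-t}\omega)$) to the precompactness of $\bigcup_{s\le\tau}\mathcal{A}(s,\omega)$. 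I expect this third step --- in particular keeping the pressure estimate uniform in $s\in(-\infty,\tau]$ --- to be the main obstacle.

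Finally, for the asymptotic autonomy, the autonomous equation \eqref{A-SNSE} with forcing $\f_\infty$ is covered by the same construction, so it has a compact random attractor $\mathcal{A}_\infty(\omega)$ inside the corresponding (autonomous) absorbing set $\mathcal{D}_\infty$. The key lemma is that $\Phi$ is \emph{backward convergent} to the autonomous cocycle $\Phi_\infty$: if $\v_{0,n}\to\v_0$ in $\L^2(\mathcal{O})$ and $s_n\to-\infty$, then $\Phi(t,s_n,\omega)\v_{0,n}\to\Phi_\infty(t,\omega)\v_0$ uniformly on compact $t$-intervals; this follows by subtracting the two $\v$-equations, an energy estimate with Gronwall, and the consequence of Hypothesis~\ref{Hypo_f-N} that $\int_{s_n}^{s_n+T}\|\f(r)-\f_\infty\|^2_{\L^2(\mathcal{O})}\,\d r\to0$ for each fixed $T$. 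With backward convergence of the cocycle, the backward-uniformly bounded and backward-uniformly asymptotically compact absorbing set, and the invariance of $\mathcal{A}_\infty$, the standard upper-semicontinuity argument --- write $\v_n\in\mathcal{A}(s_n,\omega)$ as $\v_n=\Phi(t,s_n-t,\theta_{-t}\omega)\w_n$ with $\w_n\in\mathcal{A}(s_n-t,\theta_{-t}\omega)\subseteq D(s_n-t,\theta_{-t}\omega)$, extract convergent subsequences, pass to the limit in the cocycle relation, and use $\mathcal{A}_\infty(\omega)=\bigcap_{t\ge0}\overline{\bigcup_{r\ge t}\Phi_\infty(r,\theta_{-r}\omega)\mathcal{D}_\infty}$ --- yields \eqref{MT2-N}. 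The in-probability statement \eqref{MT3-N} is then immediate: along any sequence $\tau_n\to-\infty$ one has $\mathrm{dist}_{\L^2(\mathcal{O})}(\mathcal{A}(\tau_n,\omega),\mathcal{A}_\infty(\omega))\to0$ for every $\omega$ by \eqref{MT2-N}, so dominated convergence (the indicator is bounded by $1$) gives $\mathbb{P}\big(\mathrm{dist}_{\L^2(\mathcal{O})}(\mathcal{A}(\tau_n,\cdot),\mathcal{A}_\infty(\cdot))\ge\delta\big)\to0$.
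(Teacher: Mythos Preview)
Your outline matches the paper's proof almost step for step: the same Ornstein--Uhlenbeck conjugation $\v=\u-\h z$, the backward-uniform absorbing set via Hypothesis~\ref{AonH-N}, the tail-plus-flattening scheme for time-semi-uniform asymptotic compactness (the paper packages the two pieces through Kuratowski's measure of noncompactness rather than a direct subsequence extraction, but this is cosmetic), and the backward-convergence argument for \eqref{MT2-N}--\eqref{MT3-N}.

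Two genuine gaps need filling. First, you fix $\sigma=1$ in the OU equation, but the paper leaves $\sigma>0$ free and later chooses it large (specifically $\sigma>9216\aleph^{2}/(\pi\nu^{2}\lambda^{2})$) so that $4\aleph\,\mathbb{E}|z|<\nu\lambda/24$. This is precisely what allows the stretching contribution $4\aleph|z(\theta_t\omega)|\,\|\v\|^{2}_{\H}$ to be absorbed on average in the Gronwall step for the absorbing set; with $\sigma=1$ fixed you have no such control and the backward-uniform absorbing set may fail to exist.

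Second, and more substantively, your appeal to \cite{SandN_Wang} glosses over measurability. The backward-uniform absorbing radius involves $\sup_{s\le\tau}R(s,\omega)$, a supremum over the uncountable set $(-\infty,\tau]$, which is not obviously $\mathscr{F}$-measurable; hence the abstract theorem delivers only a $\mathfrak{D}$-pullback \emph{attractor}, not a pullback \emph{random} attractor. The paper resolves this by running a parallel construction in the ordinary tempered universe $\mathfrak{B}$ (where the radius $R(\tau,\omega)$ is measurable, so \cite{SandN_Wang} yields a genuine random attractor $\widetilde{\mathcal{A}}$) and then proving $\mathcal{A}(\tau,\omega)=\widetilde{\mathcal{A}}(\tau,\omega)$ from $\widetilde{\mathcal{R}}\subseteq\mathcal{R}$, invariance of $\mathcal{A}$, and the attraction property of $\widetilde{\mathcal{A}}$. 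Without this identification step the object you construct is not a priori measurable in $\omega$, so the ``random'' in ``pullback random attractor'' and the probability statement \eqref{MT3-N} are not yet justified.
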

\begin{theorem}\label{MT1}(\texttt{Multiplicative noise case})
Under Hypotheses \ref{assumpO} and \ref{Hypo_f-N}, all results in  Theorem \ref{MT1-N} hold for
the non-autonomous random dynamical system generated by \eqref{1} with $S(\u)=\u$.
\end{theorem}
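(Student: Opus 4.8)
Since Theorem \ref{MT1} asserts that the conclusions of Theorem \ref{MT1-N} persist for $S(\u)=\u$, its proof follows the additive-noise case almost verbatim, the additive Ornstein--Uhlenbeck shift being replaced by the multiplicative (exponential) conjugation; in fact the argument is slightly shorter, since Hypothesis \ref{AonH-N} is never used. First I would let $z(\theta_t\omega)$ be the stationary solution of the scalar equation $\mathrm{d}z+z\,\mathrm{d}t=\mathrm{d}\W$ and set $\v(t)=e^{-z(\theta_t\omega)}\u(t)$; a direct Stratonovich computation then shows that $\v$ solves the pathwise random Navier--Stokes system
\[
\partial_t\v+\nu\A\v+e^{z(\theta_t\omega)}\B(\v)-z(\theta_t\omega)\v=e^{-z(\theta_t\omega)}\P\f,
\]
with no stochastic term, where $\P$ is the Helmholtz--Leray projection, $\A=-\P\Delta$ and $\B(\v)=\P[(\v\cdot\nabla)\v]$. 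Standard Galerkin/compactness arguments for the 2D Navier--Stokes equations, together with the Poincar\'e inequality \eqref{poin}, give global well-posedness of this system and the generation of a continuous non-autonomous cocycle; conjugating back by $e^{z}$ defines the NRDS $\Phi$ associated with \eqref{1}.

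Second, I would derive the backward-uniform a priori estimates. Testing the $\v$-equation with $\v$, using $\langle\B(\v),\v\rangle=0$, \eqref{poin} and Young's inequality, and then multiplying by the integrating factor $\exp\big(\nu\lambda t-2\int_{s}^{t}z(\theta_r\omega)\,\mathrm{d}r\big)$, produces bounds for $\|\v(t)\|_{\L^2(\mathcal{O})}^2$ and for $\int_{s}^{t}\|\v(r)\|_{\H^1_0(\mathcal{O})}^2\,\mathrm{d}r$ that are uniform for initial times ranging over $(-\infty,\tau]$: the $\f$-contribution is controlled by the uniformness condition \eqref{G3} (a consequence of Hypothesis \ref{Hypo_f-N}) and the random coefficients $e^{\pm z(\theta_t\omega)}$ by temperedness of the Ornstein--Uhlenbeck process. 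An $\H^1_0$-bound then follows by testing with $\A\v$ and estimating the trilinear term via the 2D Ladyzhenskaya/Agmon inequalities, yielding a backward-tempered pullback $\H^1_0$-absorbing set $\mathcal{K}=\{\mathcal{K}(\tau,\omega)\}$. From this, the temperedness of $\mathcal{A}$ and the growth bound $\lim_{t\to+\infty}e^{-\gamma t}\sup_{s\le\tau}\|\mathcal{A}(s-t,\theta_{-t}\omega)\|_{\L^2(\mathcal{O})}=0$ follow exactly as in the additive case.

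Third -- the technical core -- I would prove the time-semi-uniform asymptotic compactness of $\Phi$ over $(-\infty,\tau]$ by combining a backward uniform tail estimate with a backward flattening estimate. For the tail estimate, test the $\v$-equation with $\rho(|x|^2/k^2)\v$, where $\rho$ is smooth, $\rho\equiv0$ on $\{|x|\le1\}$ and $\rho\equiv1$ on $\{|x|\ge2\}$; since $\nabla\cdot(\rho\v)\neq0$, the pressure term $\int\nabla p\cdot(\rho\v)\,\mathrm{d}x$ does not vanish, so I would recover $p$ from the elliptic problem $-\Delta p=\mathrm{div}\,\mathrm{div}(\u\otimes\u)$, estimate $\|\nabla p\|_{\L^2(\mathcal{O}\cap\{|x|\ge k\})}$ and the advection contributions by the $\H^1_0$-absorbing bound, and control the forcing by \eqref{f3-N}; this gives $\lim_{k\to\infty}\sup_{s\le\tau}\sup_{t\ge T}\int_{\mathcal{O}\cap\{|x|\ge k\}}|\u(t;s-t,\theta_{-t}\omega,\cdot)|^2\,\mathrm{d}x=0$ on the absorbing set. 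For the flattening estimate, fix such a large $k$, let $\Pi_m$ be the spectral projection onto the first $m$ Stokes eigenfunctions of the bounded domain $\mathcal{O}\cap\{|x|<2k\}$, and test the equation for $(\I-\Pi_m)\v$ with itself, using $\lambda_{m+1}\to\infty$ to absorb the nonlinearity and the same elliptic control for the pressure. Tail plus flattening yields precompactness in $\L^2(\mathcal{O})$ of $\bigcup_{s\le\tau}\Phi(t,s-t,\theta_{-t}\omega)\mathcal{K}(s-t,\theta_{-t}\omega)$, hence the existence of the pullback random attractor $\mathcal{A}$ with $\bigcup_{s\in(-\infty,\tau]}\mathcal{A}(s,\omega)$ precompact.

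Finally, for the asymptotically autonomous statements I would show that if $\v_\infty$ denotes the solution of the autonomous random system associated with \eqref{A-SNSE} and forcing $\f_\infty$, then $\|\v(t;s,\omega,\v_s)-\v_\infty(t-s;\omega,\v_s)\|_{\L^2(\mathcal{O})}\to0$ as $s\to-\infty$, uniformly for $\v_s$ in bounded sets, by a Gronwall estimate in which the forcing discrepancy $\int_{s}^{t}\|\f(r)-\f_\infty\|_{\L^2(\mathcal{O})}^2\,\mathrm{d}r$ vanishes by Hypothesis \ref{Hypo_f-N}. Feeding this convergence together with the time-semi-uniform compactness into the abstract upper-semicontinuity criterion used for Theorem \ref{MT1-N} gives \eqref{MT2-N}, i.e. $\lim_{\tau\to-\infty}\mathrm{dist}_{\L^2(\mathcal{O})}(\mathcal{A}(\tau,\omega),\mathcal{A}_\infty(\omega))=0$ for all $\omega$, and \eqref{MT3-N} is then an immediate consequence of \eqref{MT2-N} (equivalently, it follows directly from the $\theta$-invariance of $\mathbb{P}$, as in the additive case). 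Throughout, the main obstacle is the rigorous handling of the pressure in the tail and flattening estimates on the unbounded domain $\mathcal{O}$; every other step is a faithful adaptation of the additive-noise proof, with $e^{\pm z(\theta_t\omega)}$ in place of the additive shift and with Hypothesis \ref{AonH-N} dropped.
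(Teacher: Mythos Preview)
Your overall architecture—the exponential conjugation $\v=e^{-z}\u$, pathwise well-posedness, backward-uniform energy bounds, tail estimate plus flattening to obtain time-semi-uniform asymptotic compactness, backward convergence to the autonomous limit, and the abstract upper-semicontinuity criterion—coincides with the paper's Section~\ref{sec4}, and you are right that Hypothesis~\ref{AonH-N} plays no role in this case.

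The one substantive deviation is your second step: you propose to test with $\A\v$ and produce a backward-tempered $\V$-\emph{absorbing} set, then feed pointwise $\V$-control (and an estimate on $\|\nabla p\|_{\L^2}$) into the tail and flattening arguments. The paper does neither. It works only with the $\H$-absorbing set of Proposition~\ref{IRAS} together with the \emph{time-integrated} $\V$-bound built into Lemma~\ref{Absorbing}. In the tail estimate (Lemma~\ref{largeradius}) the pressure contribution is first integrated by parts, using $\nabla\cdot\v=0$, to $\int_{\mathcal{O}} p\,\nabla(\uprho^2)\cdot\v\,\d x$, so only $\|p\|_{\mathrm{L}^2(\mathcal{O})}$ is needed; by \eqref{p-value-N-M} this is bounded by $Ce^{4z}\|\v\|_{\L^4}^4\le Ce^{4z}\|\v\|_{\H}^2\|\v\|_{\V}^2$, and after the variation-of-constants formula one is left with weighted time integrals of $\|\v\|_{\H}^4$ and $\|\v\|_{\V}^2$, which are controlled by \eqref{AB3} and the computation \eqref{ep7}. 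Your $\V$-absorbing-set route may be workable, but establishing it \emph{backward-uniformly} over $(-\infty,\tau]$ on the unbounded domain would require a uniform-Gronwall argument you have not sketched, and bounding $\|\nabla p\|_{\L^2}$ rather than $\|p\|_{\L^2}$ asks for one more derivative than the equation naturally supplies. The paper's version avoids both complications and is already sufficient.
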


\subsection{Novelties, difficulties and approaches}
A crucial point to  prove  \eqref{MT2-N} and \eqref{MT3-N} is how to obtain the uniform
 precompactness of
$\bigcup_{s\in(-\infty,\tau]}\mathcal{A}
(s,\omega)$ in $\L^2(\mathcal{O})$.
It is known from the theoretical results of Wang \cite{SandN_Wang} that the pullback asymptotic compactness of $\Phi$ implies the compactness of every single time-section $\mathcal{A}
(\tau,\omega)$. Since $(-\infty,\tau]$ is an infinite interval,
one cannot expect that the usual pullback asymptotical compactness of $\Phi$ leads to the
precompactness of
$\bigcup_{s\in(-\infty,\tau]}\mathcal{A}
(s,\omega)$ in $\L^2(\mathcal{O})$. However, motivated by the ideas of \cite{SandN_Wang}, this can be done if one is able to show that the usual pullback asymptotically compactness of $\Phi$ is uniform with respect to a uniformly tempered universe (see \eqref{D-NSE})
over $(-\infty,\tau]$.

When  $\mathcal{O}$ is a bounded domain, the \emph{uniform} pullback asymptotically compactness of $\Phi$ over $(-\infty,\tau]$ has been established in \cite{KRM} via a compact  uniform pullback absorbing set by using compact Sobolev embeddings. This argument has been widely used to study similar topics for other types of   stochastic fluid equations defined on bounded domains, see Wang and Li \cite{WL} for MHD equations; and \cite{LX,LX1,XL,YGL,ZL}, etc. for $g$-Navier-Stokes equations and Brinkman-Forchheimer equations.

When  $\mathcal{O}$ is a unbounded domain as considered as an unsolved problem in the present paper, the Sobolev embeddings are no longer compact, proving such \emph{uniform} asymptotically compactness is therefore hard than the bounded domain case. In this paper, we use the idea of uniform tail-estimates due to Wang \cite{UTE-Wang} to  overcome the noncompactness of Sobolev embeddings on unbounded domains, that is, we will use a cut-off technique to prove that solutions to  \eqref{1} are sufficiently small in $\L^2(\mathcal{O}^c_k)$ uniformly over $(-\infty,\tau],$ when $k$ is large enough, where $\mathcal{O}_{k}=\{x\in\mathcal{O}:|x|\leq k\}$ and $\mathcal{O}_{k}=\mathbb{R}^2\setminus \mathcal{O}_{k}$. Essentially unlike the parabolic or hyperbolic equations as considered in \cite{chenp,CGTW,LGL,UTE-Wang,rwang1},  etc. the fluid equations like \eqref{1} contain the pressure term $p$. When we derive these uniform tail-estimates, the pressure term $p$ can not simply treated by the divergence theorem (does not vanish). However, by taking the divergence in \eqref{1} and using the incompressibility condition, we get  the  rigorous expression of the pressure term $p=(-\Delta)^{-1}\big[\sum_{i,j=1}^{2} \frac{\partial^2}{\partial x_i\partial x_j}(u_iu_j)\big]=(-\Delta)^{-1}[\Tr(\nabla u)^2]$. Then it is possible to derive these uniform tail-estimates, but we shall carefully deal with the $\L^4(\mathcal{O})$-norm of the solutions resulting from the expression of $p$, see \eqref{ep7-N}. As a  result of these uniform tail-estimates and uniform ``flattening effecting'' of solutions to \eqref{1}, the uniform pullback asymptotical compactness of $\Phi$ in $\L^2(\mathcal{O})$ follows. It is worth mentioning that the wide-spread idea of energy equations developed by Ball \cite{Ball} can be used to overcome the noncompactness of Sobolev embeddings on unbounded domains, see \cite{BCLLLR,BL,CLR,CLR1,GGW,KM7,PeriodicWang, Wang2011Tran,wangjfa,rwang2} and many others. We remark that we are currently unable to use the idea of energy equations to prove the uniform pullback asymptotical compactness of $\Phi$ in $\L^2(\mathcal{O})$ since $(-\infty,\tau]$ is an infinite time-interval.

Since we have to consider the uniformly tempered universe to prove the uniform pullback asymptotically compactness of $\Phi$, we shall prove the measurability of the uniformly compact  attractor. This is not straightforward compared with the the usual case since the radii of the uniform pullback absorbing set is taken as the supremum over an uncountable set $(-\infty,\tau]$ (see Proposition \ref{IRAS-N}). In order to  surmount the difficulty, we first observe that the measurability of the usual random attractor is known in the literature, see for example, \cite{BCLLLR,BL,CLR,CLR1,GGW}, and then prove that such a uniformly compact attractor is just equal to the usual random attractor. This idea has been successfully used by Caraballo et.al. \cite{CGTW} and Wang and Li \cite{WL} for different stochastic models.

\subsection{Outline}
In the next section, we consider an abstract formulation of \eqref{1}, discuss  the properties of an Ornstein-Uhlenbeck process and Kuratowski's measure of noncompactness. In Section \ref{sec3}, we prove Theorem \ref{MT1-N} for problem \eqref{1} driven by additive noise. In the final section, we prove Theorem \ref{MT1} for problem \eqref{1} driven by multiplicative noise.

		\section{Mathematical Formulations and Preparations}\label{sec2}\setcounter{equation}{0}
	In this section, we discuss the necessary function spaces needed to obtain the results of this work. Next, we define linear and bilinear operators which help us to obtain an abstract formulation of the stochastic system \eqref{1}. Further, we discuss the Ornstein-Uhlenbeck process, its properties and the backward tempered random sets. Finally, we discuss Kuratowski's measure of noncompactness with its consequence (Lemma \ref{K-BAC}). Note that Lemma \ref{K-BAC} plays a crucial role to prove the time-semi-uniform asymptotic compactness (see Subsection \ref{thm1.4}).
	\subsection{Function spaces and operators}
	Let the space $\mathcal{V}:=\{\u\in\C_0^{\infty}(\mathcal{O};\R^2):\nabla\cdot\u=0\},$ where $\C_0^{\infty}(\mathcal{O};\R^2)$ denote the space of all infinite times differentiable functions  ($\R^2$-valued) with compact support in $\mathcal{O}$. Let $\H$ and $\V$ denote the completion of $\mathcal{V}$ in 	$\mathrm{L}^2(\mathcal{O};\R^2)$ and $\mathrm{H}_0^1(\mathcal{O};\R^2)$ norms, respectively. The spaces  $\H$ and $\V$ are endowed with the norms $\|\u\|_{\H}^2:=\int_{\mathcal{O}}|\u(x)|^2\d x$ and $\|\u\|_{\V}^2:=\int_{\mathcal{O}}|\nabla\u(x)|^2\d x$ (using Poincar\'e's inequality),  respectively. The induced duality between the spaces $\V$ and $\V^*$ is denoted by $\langle\cdot,\cdot\rangle.$ Moreover, we have the continuous embedding $\V\hookrightarrow\H\equiv\H^*\hookrightarrow\V^*.$
	
		\subsubsection{Linear operator}\label{LO}
	Let $\mathcal{P}: \L^2(\mathcal{O}) \to\H$ denote the Helmholtz-Hodge orthogonal projection (cf.  \cite{OAL}). Let us define the Stokes operator
	\begin{equation*}
		\A\u:=-\mathcal{P}\Delta\u,\;\u\in\D(\A).
	\end{equation*}
	The operator $\A:\V\to\V^*$ is a linear continuous operator.	Since the boundary of $\mathcal{O}$ is uniformly of class $\mathrm{C}^3$, it is inferred that $\D(\A)=\V\cap\H^2(\mathcal{O})$ and $\|\A\u\|_{\H}$ defines a norm in $\D(\A),$ which is equivalent to the one in $\H^2(\mathcal{O})$ (cf. Lemmas  1, \cite{Heywood}). The above argument implies that $\mathcal{P}:\H^2(\mathcal{O})\to\H^2(\mathcal{O})$ is a bounded operator. Moreover, the operator $\A$ is  non-negative self-adjoint  in $\H$ and
	\begin{align}\label{2.7a}
		\langle\A\u,\u\rangle =\|\u\|_{\V}^2,\ \textrm{ for all }\ \u\in\V, \ \text{ and }\ \|\A\u\|_{\V^*}\leq \|\u\|_{\V}.
	\end{align}
	\subsubsection{Bilinear operator}\label{BO}
	Let us define the \emph{trilinear form} $b(\cdot,\cdot,\cdot):\V\times\V\times\V\to\R$ by $$b(\u,\v,\w)=\int_{\mathcal{O}}(\u(x)\cdot\nabla)\v(x)\cdot\w(x)\d x=\sum_{i,j=1}^2\int_{\mathcal{O}}\u_i(x)
\frac{\partial \v_j(x)}{\partial x_i}\w_j(x)\d x.$$ If $\u, \v$ are such that the linear map $b(\u, \v, \cdot) $ is continuous on $\V$, the corresponding element of $\V^*$ is denoted by $\B(\u, \v)$. We also denote $\B(\u) = \B(\u, \u)=\mathcal{P}[(\u\cdot\nabla)\u]$.	An integration by parts gives
	\begin{equation}\label{b0}
		\left\{
		\begin{aligned}
			b(\u,\v,\v) &= 0,\ \text{ for all }\ \u,\v \in\V,\\
			b(\u,\v,\w) &=  -b(\u,\w,\v),\ \text{ for all }\ \u,\v,\w\in \V.
		\end{aligned}
		\right.\end{equation}
	\begin{remark}
	1. 	The following inequality is used in the sequel $($see Chapter 2, section 2.3, \cite{Temam1}$):$
			\begin{align}
				|b(\u,\v,\w)|&\leq
				C\|\u\|^{1/2}_{\H}\|\u\|^{1/2}_{\V}
\|\v\|_{\V}\|\w\|^{1/2}_{\H}\|\w\|^{1/2}_{\V},\ \ \text{ for all } \u, \v, \w\in \V.  \label{b1}
			\end{align}

	2. 	Note that $\langle\B(\u,\u-\v),\u-\v\rangle=0$, which  implies that
		\begin{equation}\label{441}
			\begin{aligned}
				\langle \B(\u)-\B(\v),\u-\v\rangle =\langle\B(\u-\v,\v),\u-\v\rangle=-\langle\B(\u-\v,\u-\v),\v\rangle.
			\end{aligned}
		\end{equation}
	\end{remark}

\subsection{Abstract formulation and Ornstein-Uhlenbeck process}\label{2.5}
Taking the projection $\mathcal{P}$ on the 2D SNSE equations \eqref{1}, one obtains
\begin{equation}\label{SNSE}
	\left\{
	\begin{aligned}
		\frac{\d\u(t)}{\d t}+\nu \A\u(t)+\B(\u(t))&=\f(t) +S(\u(t))\circ\frac{\d \W(t)}{\d t} , \\
		\u(x,\tau)&=\u_{0}(x),\ \ \	x\in \mathcal{O},
	\end{aligned}
	\right.
\end{equation}
where $S(\u)=\u$ or independent of $\u$ (for simplicity of notations, we denoted $\mathcal{P}S(\u)$ as $S(\u)$ and $\mathcal{P}\f$ as $\f$). Here, $\W(t,\omega)$ is a  standard scalar Wiener process on the probability space $(\Omega, \mathscr{F}, \mathbb{P}),$ where $$\Omega=\{\omega\in C(\R;\R):\omega(0)=0\},$$ endowed with the compact-open topology given by the complete metric
\begin{align*}
	d_{\Omega}(\omega,\omega'):=\sum_{m=1}^{\infty} \frac{1}{2^m}\frac{\|\omega-\omega'\|_{m}}{1+\|\omega-\omega'\|_{m}},\ \text{ where }\  \|\omega-\omega'\|_{m}:=\sup_{-m\leq t\leq m} |\omega(t)-\omega'(t)|,
\end{align*}
and $\mathscr{F}$ is the Borel sigma-algebra induced by the compact-open topology of $(\Omega,d_{\Omega}),$ $\mathbb{P}$ is the two-sided Wiener measure on $(\Omega,\mathscr{F})$. From \cite{FS}, it is clear that  the measure $\mathbb{P}$ is ergodic and invariant under the translation-operator group $\{\theta_t\}_{t\in\R}$ on $\Omega$ defined by
\begin{align*}
	\theta_t \omega(\cdot) = \omega(\cdot+t)-\omega(t), \ \text{ for all }\ t\in\R, \ \omega\in \Omega.
\end{align*}
The operator $\theta(\cdot)$ is known as \emph{Wiener shift operator}. Moreover, the quadruple $(\Omega,\mathscr{F},\mathbb{P},\theta)$ defines a metric dynamical system, see \cite{Arnold,BCLLLR}.

\subsubsection{Ornstein-Uhlenbeck process}
Consider for some $\sigma>0$ (which will be specified later)
\begin{align}\label{OU1}
	z(\theta_{t}\omega) =  \int_{-\infty}^{t} e^{-\sigma(t-\xi)}\d \W(\xi), \ \ \omega\in \Omega,
\end{align} which is the stationary solution of the one dimensional Ornstein-Uhlenbeck equation
\begin{align}\label{OU2}
	\d z(\theta_t\omega) + \sigma z(\theta_t\omega)\d t =\d\W(t).
\end{align}
It is known from \cite{FAN} that there exists a $\theta$-invariant subset $\widetilde{\Omega}\subset\Omega$ of full measure such that $z(\theta_t\omega)$ is continuous in $t$ for every $\omega\in \widetilde{\Omega},$ and
\begin{align}
	\mathbb{E}\left(|z(\theta_s\omega)|^{\xi}\right)&=\frac{\Gamma\left(\frac{1+\xi}{2}\right)}{\sqrt{\pi\sigma^{\xi}}}, \ \text{ for all } \ \xi>0, \ s\in \R,\label{Z2}\\
	\lim_{t\to +\infty} e^{-\delta t}|z(\theta_{-t}\omega)| &=0, \ \text{ for all } \ \delta>0,\label{Z5}\\
	\lim_{t\to \pm \infty} \frac{1}{t} \int_{0}^{t} z(\theta_{\xi}\omega)\d\xi &=\lim_{t\to \pm \infty} \frac{|z(\theta_t\omega)|}{|t|}=0,\label{Z3}
\end{align} where $\Gamma$ is the Gamma function. For further analysis of this work, we do not distinguish between $\widetilde{\Omega}$ and $\Omega$. Since, $\omega(\cdot)$ has sub-exponential growth  (cf. Lemma 11, \cite{CGSV}), $\Omega$ can be written as $\Omega=\cup_{N\in\N}\Omega_{N}$, where
\begin{align*}
	\Omega_{N}:=\{\omega\in\Omega:|\omega(t)|\leq Ne^{|t|},\text{ for all }t\in\R\}, \text{ for all } \ N\in\N.
\end{align*}
\begin{lemma}[Lemma 2.5, \cite{YR}]\label{conv_z}
	For each $N\in\N$, suppose $\omega_k,\omega_0\in\Omega_{N}$ such that $d_{\Omega}(\omega_k,\omega_0)\to0$ as $k\to+\infty$. Then, for each $\tau\in\R$ and $T\in\R^+$ ,
	\begin{align}
		\sup_{t\in[\tau,\tau+T]}&\bigg[|z(\theta_{t}\omega_k)-z(\theta_{t}\omega_0)|+|e^{ z(\theta_{t}\omega_k)}-e^{ z(\theta_{t}\omega_0)}|\bigg]\to 0 \ \text{ as } \ k\to+\infty,\nonumber\\
		\sup_{k\in\N}\sup_{t\in[\tau,\tau+T]}&|z(\theta_{t}\omega_k)|\leq C(\tau,T,\omega_0).\label{conv_z2}
	\end{align}
\end{lemma}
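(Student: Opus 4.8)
The plan is to base everything on a pathwise representation of the Ornstein--Uhlenbeck process and then to combine two elementary facts: $d_\Omega$-convergence is precisely locally uniform convergence, while membership in a fixed $\Omega_N$ furnishes a $k$-uniform sub-exponential bound that is absorbed by the exponentially decaying kernel $e^{-\sigma(t-\xi)}$.

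First I would rewrite \eqref{OU1} as a genuine (pathwise) integral: for a fixed $\omega\in\Omega$ the Wiener integral in \eqref{OU1} may be integrated by parts, and using $\omega(0)=0$ together with the sub-exponential growth of $\omega$ to discard the boundary term at $-\infty$ (here one uses that $\sigma$ in \eqref{OU1} dominates the growth exponent in the definition of $\Omega_N$), one obtains, for every $t\in\R$,
$$z(\theta_t\omega)=\omega(t)-\sigma\int_{-\infty}^{t}e^{-\sigma(t-\xi)}\,\omega(\xi)\,\d\xi .$$
This identity exhibits $z(\theta_t\omega)$ as an explicit functional of the path $\omega$ with an exponentially weighted tail, and it is the workhorse of the whole argument.

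Next, fix $\tau\in\R$, $T\in\R^+$ and $N\in\N$ with $\omega_k,\omega_0\in\Omega_N$. For $t\in[\tau,\tau+T]$ I would subtract the two representations,
$$z(\theta_t\omega_k)-z(\theta_t\omega_0)=\big(\omega_k(t)-\omega_0(t)\big)-\sigma\int_{-\infty}^{t}e^{-\sigma(t-\xi)}\big(\omega_k(\xi)-\omega_0(\xi)\big)\,\d\xi ,$$
and split the integral as $\int_{-\infty}^{-M}+\int_{-M}^{t}$. On $(-\infty,-M]$ I would use the crude bound $|\omega_k(\xi)-\omega_0(\xi)|\le 2Ne^{|\xi|}$ together with the kernel decay to estimate this tail by $C(N,\sigma,\tau,T)\,e^{-(\sigma-1)M}$, which is $<\varepsilon/2$ uniformly in $k$ once $M=M(\varepsilon)$ is large. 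On the compact interval $[-M,\tau+T]$, the hypothesis $d_\Omega(\omega_k,\omega_0)\to0$ forces $\sup_{[-M,\tau+T]}|\omega_k-\omega_0|\to0$, so this near part together with $|\omega_k(t)-\omega_0(t)|$ is $<\varepsilon/2$ for all large $k$; taking the supremum over $t\in[\tau,\tau+T]$ yields the first convergence. For the uniform bound, inserting $|\omega_k(\xi)|\le Ne^{|\xi|}$ into the representation and splitting the integral at $\xi=0$ gives directly
$$\sup_{k\in\N}\ \sup_{t\in[\tau,\tau+T]}|z(\theta_t\omega_k)|\le C(\tau,T,\omega_0),$$
the constant depending only on $N$ (hence on $\omega_0$), $\sigma$, $\tau$ and $T$. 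Finally, for the exponential term I would use this bound and the fact that $x\mapsto e^x$ is Lipschitz on $[-C,C]$ with constant $e^C$, whence
$$\sup_{t\in[\tau,\tau+T]}\big|e^{z(\theta_t\omega_k)}-e^{z(\theta_t\omega_0)}\big|\le e^{C}\sup_{t\in[\tau,\tau+T]}\big|z(\theta_t\omega_k)-z(\theta_t\omega_0)\big|\longrightarrow 0 .$$

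The main obstacle is genuinely the tail over the infinite half-line $(-\infty,t]$: because $d_\Omega$-convergence is only locally uniform, one has essentially no control of $\omega_k-\omega_0$ far in the past, and the estimate closes only because the $k$-uniform $\Omega_N$-bound is quantitatively dominated by the exponential decay of the OU kernel — this is the sole place where the structure of $\Omega_N$ (and the choice of $\sigma$) enters. Once that tail is disposed of, the remainder is a routine $\varepsilon/2$ splitting plus the local Lipschitz continuity of the exponential.
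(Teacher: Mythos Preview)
Your argument is correct. The paper does not prove this lemma at all: it is quoted verbatim as Lemma~2.5 of \cite{YR} and used as a black box, so there is no in-paper proof to compare against. Your route --- the pathwise integration-by-parts identity
\[
z(\theta_t\omega)=\omega(t)-\sigma\int_{-\infty}^{t}e^{-\sigma(t-\xi)}\omega(\xi)\,\d\xi,
\]
followed by a near/far splitting at $-M$ with the $\Omega_N$-bound controlling the tail and $d_\Omega$-convergence controlling the compact part, then local Lipschitz of $e^x$ --- is exactly the standard proof (and is, in substance, the argument in \cite{YR}). The one implicit hypothesis you correctly flag is that $\sigma$ must dominate the growth exponent in $\Omega_N$, i.e.\ $\sigma>1$; in the present paper this is harmless since $\sigma$ is later taken large (cf.\ \eqref{Z6}).
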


\subsubsection{Backward-uniformly tempered random set}{(\cite{YR})}
A bi-parametric set $\mathcal{D}=\{\mathcal{D}(\tau,\omega)\}$ in a Banach space $\X$ is said to be \emph{backward-uniformly tempered} if
\begin{align}\label{BackTem}
	\lim_{t\to +\infty}e^{-ct}\sup_{s\leq \tau}\|\mathcal{D}(s-t,\theta_{-t}\omega)\|^2_{\X}=0\ \ \forall \ \  (\tau,\omega,c)\in\R\times\Omega\times\R^+,
\end{align}
where $\|\mathcal{D}\|_{\X}=\sup\limits_{\x\in \mathcal{D}}\|\x\|_{\X}.$
\subsubsection{Class of random sets}
\begin{itemize}
	\item Let ${\mathfrak{D}}$ be the collection of subsets of $\H$ defined as:
	\begin{align}\label{D-NSE}
		{\mathfrak{D}}=\left\{{\mathcal{D}}=\{{\mathcal{D}}(\tau,\omega):(\tau,\omega)\in\R\times\Omega\}:\lim_{t\to +\infty}e^{-ct}\sup_{s\leq \tau}\|{\mathcal{D}}(s-t,\theta_{-t}\omega)\|^2_{\H}=0,\ \forall \ c>0\right\}.
	\end{align}
	\item Let ${\mathfrak{B}}$ be the collection of subsets of $\H$ defined as:
\begin{align*}
	{\mathfrak{B}}=\left\{{\mathcal{B}}=\{{\mathcal{B}}(\tau,\omega):(\tau,\omega)\in\R\times\Omega\}:\lim_{t\to +\infty}e^{-ct}\|{\mathcal{B}}(\tau-t,\theta_{-t}\omega)\|^2_{\H}=0,\ \forall\ c>0\right\}.
\end{align*}
	\item Let ${\mathfrak{D}}_{\infty}$ be the collection of subsets of $\H$ defined as:
	\begin{align*}
		{\mathfrak{D}}_{\infty}=\left\{\widehat{\mathcal{D}}=\{\widehat{\mathcal{D}}(\omega):\omega\in\Omega\}:\lim_{t\to +\infty}e^{-\frac{\nu\lambda_{1}}{3}t}\|\widehat{\mathcal{D}}(\theta_{-t}\omega)\|^2_{\H}=0,\ \forall \ c>0\right\}.
	\end{align*}
\end{itemize}
\subsection{Kuratowski's measure of noncompactness} The first result of  measure of noncompactness was defined and studied by Kuratowski in \cite{Kuratowski} (see \cite{Malkowsky} also). With the help of some vital implications of Kuratowski's measure of noncompactness, one can show the existence of a convergent subsequences for some arbitrary sequences. Therefore, several authors used such results to obtain the asymptotic compactness of random dynamical systems, cf. \cite{CGTW,YR} etc. and references therein.
\begin{definition}[Kuratowski's measure of noncompactness, \cite{Rakocevic}]
	Let $(\mathbb{X}, d)$ be a metric space and $E$ a bounded subset of $\mathbb{X}$. Then the Kuratowski measure of noncompactness (the set-measure of noncompactness) of $E$, denoted by $\kappa_{\mathbb{X}}(E)$, is the infimum of the set of all numbers $\varepsilon>0$ such that $E$ can be covered by a finite number of sets with diameters less than $\varepsilon$, that is,
	\begin{align*}
		\kappa_{\mathbb{X}}(E)=\inf\bigg\{\varepsilon>0:E\subset\bigcup_{i=1}^{n}Q_{i}, \ Q_{i}\subset\mathbb{X},\ \mathrm{diam}(Q_{i})<\varepsilon\ \ (i=1,2\ldots,n; \ n\in\N)\bigg\}.
	\end{align*}
The function $\kappa$ is called \emph{Kuratowski's measure of noncompactness}.
\end{definition}
Note that $	\kappa_{\mathbb{X}}(E)=0$ if and only if $\overline{E}$ is compact (see Lemma 1.2, \cite{Rakocevic}). The following lemma is an application of Kuratowski's measure of noncompactness which is helpful in proving the time-semi-uniform asymptotic compactness of random dynamical systems.
\begin{lemma}[Lemma 2.7, \cite{LGL}]\label{K-BAC}
	Let $\mathbb{X}$ be a Banach space and $x_n$ be an arbitrary sequence in $\mathbb{X}$. Then $\{x_n\}$ has a convergent subsequence if $\kappa_{\mathbb{X}}\{x_n:n\geq m\}\to 0 \text{ as } m\to \infty.$
\end{lemma}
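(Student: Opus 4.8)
The plan is to extract a Cauchy subsequence of $\{x_n\}$ by a nested-subsequence (diagonal) argument driven by the hypothesis $\kappa_{\mathbb{X}}\{x_n:n\geq m\}\to 0$. Write $S_m:=\{x_n:n\geq m\}$ for $m\in\N$. First I would record that each $S_m$ is bounded: the hypothesis makes $\kappa_{\mathbb{X}}(S_{m})$ finite for $m$ large, hence such an $S_m$ is covered by finitely many sets of finite diameter and is bounded, and since $S_1\setminus S_m$ is finite, $\{x_n\}$ itself is bounded; this is what makes the Kuratowski measure meaningful on all the sets we use.

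Next I would build a decreasing chain of infinite index sets $\N=N_0\supseteq N_1\supseteq N_2\supseteq\cdots$ with the property that $\|x_i-x_j\|<\tfrac1k$ for all $i,j\in N_k$. Suppose $N_{k-1}$ has been constructed. Since $\kappa_{\mathbb{X}}(S_m)\to0$, choose $m$ with $\kappa_{\mathbb{X}}(S_m)<\tfrac1k$; by definition of the Kuratowski measure, $S_m\subset\bigcup_{i=1}^{p}Q_i$ with $\mathrm{diam}(Q_i)<\tfrac1k$ for $i=1,\dots,p$. The set $\{n\in N_{k-1}:n\geq m\}$ is infinite and every such $x_n$ lies in $S_m$, hence in some $Q_i$; by the pigeonhole principle there is an index $i_0$ for which $N_k:=\{n\in N_{k-1}:n\geq m,\ x_n\in Q_{i_0}\}$ is infinite. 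Then $N_k\subseteq N_{k-1}$ and, for all $i,j\in N_k$, $\|x_i-x_j\|\leq\mathrm{diam}(Q_{i_0})<\tfrac1k$, as required.

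Finally I would perform the diagonal extraction: pick $n_1\in N_1$, and inductively $n_k\in N_k$ with $n_k>n_{k-1}$ (possible because each $N_k$ is infinite). For any $k'\geq k$ one has $n_{k'}\in N_{k'}\subseteq N_k$, so $\|x_{n_k}-x_{n_{k'}}\|<\tfrac1k$; thus $\{x_{n_k}\}$ is a Cauchy sequence in the Banach space $\mathbb{X}$ and therefore converges, which is the desired conclusion.

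The argument is elementary; the only points needing care are the consistency of the nesting (each $N_k$ must be an infinite subset of $N_{k-1}$, which is exactly what the pigeonhole step guarantees) and the observation that $\kappa_{\mathbb{X}}(S_m)<\varepsilon$ yields a finite cover by sets whose \emph{diameters}, and hence all pairwise distances of the $x_n$'s they contain, are $<\varepsilon$. Completeness of $\mathbb{X}$ is used only at the last step to pass from Cauchy to convergent.
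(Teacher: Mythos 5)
Your proof is correct. Note first that the paper does not actually prove this lemma: it is quoted verbatim as Lemma 2.7 of \cite{LGL}, so there is no in-paper argument to compare against. Your nested-cover plus diagonal extraction is the standard elementary proof and all the steps check out: the pigeonhole step does produce an infinite $N_k\subseteq N_{k-1}$ on which pairwise distances are below $1/k$, and completeness is correctly invoked only at the end. A shorter route, closer in spirit to how such statements are usually derived from the cited properties of $\kappa_{\mathbb{X}}$, is to observe that $S_1\setminus S_m$ is finite for every $m$, and the Kuratowski measure is unchanged by adjoining a finite set (a finite set has measure zero and $\kappa_{\mathbb{X}}(A\cup B)=\max\{\kappa_{\mathbb{X}}(A),\kappa_{\mathbb{X}}(B)\}$); hence $\kappa_{\mathbb{X}}(S_1)=\kappa_{\mathbb{X}}(S_m)$ for all $m$, so $\kappa_{\mathbb{X}}(S_1)=0$, and by the fact recorded just before the lemma ($\kappa_{\mathbb{X}}(E)=0$ iff $\overline{E}$ is compact) the whole sequence lies in a compact set and therefore has a convergent subsequence. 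Your version buys self-containedness (it does not rely on the maximum property or on the compactness characterization), at the cost of a longer construction; either is acceptable.
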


	\section{Asymptotically autonomous robustness of random attractors of \eqref{1}: additive noise }\label{sec3}\setcounter{equation}{0}
		In this section, we consider the 2D SNSE \eqref{SNSE} driven by additive white noise, that is, $S(\u)$ is independent of $\u,$ and establish the existence and asymptotic autonomy of $\mathfrak{D}$-pullback random attractors. Let us consider 2D SNSE perturbed by additive white noise for $t\geq \tau,$ $\tau\in\mathbb{R}$ and $\h\in\D(\A)$ as
	\begin{equation}\label{SNSE-A}
		\left\{
		\begin{aligned}
			\frac{\d\u(t)}{\d t}+\nu \A\u(t)+\B(\u(t))&=\f(t) +\h(x)\frac{\d \W(t)}{\d t} , \\
			\u(x,\tau)&=\u_{0}(x),	\ \ \ x\in \mathcal{O},
		\end{aligned}
		\right.
	\end{equation}
	where $\W(t,\omega)$ is the standard scalar Wiener process on the probability space $(\Omega, \mathscr{F}, \mathbb{P})$ (see Section \ref{2.5} above).
	
	Let us define $\v(t,\tau,\omega,\v_{\tau})=\u(t,\tau,\omega,\u_{\tau})-\h(x)z(\theta_{t}\omega)$, where $z$ is defined by \eqref{OU1} and satisfies \eqref{OU2}, and $\u$ is the solution of \eqref{1} with $S(\u)=\h(x)$. Then $\v$ satisfies:
\begin{equation}\label{2-A}
	\left\{
	\begin{aligned}
		\frac{\d\v}{\d t}-\nu \Delta\v&+\big((\v+\h z)\cdot\nabla\big)(\v+\h z)+\nabla p\\&=\f +\sigma\h z+\nu z\Delta\h,   \text{ in }\  \mathcal{O}\times(\tau,\infty), \\ \nabla\cdot\v&=0, \ \ \ \ \ \ \ \ \  \text{ in } \ \ \mathcal{O}\times(\tau,\infty),\\ \v&=0, \ \ \ \ \ \ \ \ \  \text{ in } \ \ \partial\mathcal{O}\times(\tau,\infty), \\
		\v(x,\tau)&=\v_{0}(x)=\u_{0}(x)-\h(x)z(\theta_{\tau}\omega),   \ x\in \mathcal{O} \ \text{ and }\ \tau\in\R,
	\end{aligned}
	\right.
\end{equation}
as well as the projected form in $\V^*$:
\begin{equation}\label{CNSE-A}
	\left\{
	\begin{aligned}
		\frac{\d\v}{\d t} +\nu \A\v+ \B(\v+\h z)&= \boldsymbol{f} + \sigma\h z -\nu z\A\h , \quad t> \tau,   \ \tau\in\R ,\\
		\v(x,\tau)&=\v_{0}(x)=\u_{0}(x)-\h(x)z(\theta_{\tau}\omega), \ \ x\in\mathcal{O},
	\end{aligned}
	\right.
\end{equation}

\subsection{Lusin continuity and measurability of systems}
Lusin continuity assists us to define the non-autonomous random dynamical system (NRDS). The following Lemma gives us the energy inequality satisfied by the solution of the system \eqref{CNSE-A} which will be frequently used.
\begin{lemma}
	Assume that $\f\in\mathrm{L}^2_{\mathrm{loc}}(\R;\H)$, Hypotheses \ref{assumpO} and \ref{AonH-N} are satisfied. Then, the solution of \eqref{CNSE-A} satisfies the following inequality:
	\begin{align}\label{EI1-N}
		&\frac{\d}{\d t}\|\v(t)\|^2_{\H}+\left(\nu\lambda-4{\aleph}\left|z(\theta_{t}\omega)\right|\right)\|\v(t)\|^2_{\H}+\frac{\nu}{2}\|\v(t)\|^2_{\V}\leq \widehat{R}_4\left[\|\f(t)\|^{2}_{\H}+\left|z(\theta_{t}\omega)\right|^3+1\right],
	\end{align}
	where $\widehat{R}_4>0$ is some constant.
\end{lemma}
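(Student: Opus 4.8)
The plan is to derive the energy inequality by testing the projected equation \eqref{CNSE-A} with $\v$ in $\H$ and carefully estimating the nonlinear term, which is the only genuinely delicate part. First I would take the inner product of \eqref{CNSE-A} with $\v(t)$ in $\H$ to obtain
\begin{align*}
	\frac12\frac{\d}{\d t}\|\v(t)\|^2_{\H}+\nu\|\v(t)\|^2_{\V}+\langle\B(\v+\h z),\v\rangle=\langle\f,\v\rangle+\sigma z\langle\h,\v\rangle-\nu z\langle\A\h,\v\rangle,
\end{align*}
using $\langle\A\v,\v\rangle=\|\v\|^2_{\V}$ from \eqref{2.7a}. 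The three terms on the right are handled by Cauchy--Schwarz and Young's inequality, absorbing $\frac{\nu}{8}\|\v\|^2_{\V}$ each time via Poincar\'e \eqref{poin} (so that $\|\v\|_{\H}^2\le\lambda^{-1}\|\v\|_{\V}^2$), which produces the terms $\widehat R\|\f\|^2_{\H}$, $\widehat R|z|^2\|\h\|^2_{\H}$ and $\widehat R|z|^2\|\A\h\|^2_{\H}$; since $\h\in\D(\A)$ is fixed these last two are bounded by $\widehat R(|z|^2+1)\le\widehat R(|z|^3+1)$ after a further Young step, matching the right side of \eqref{EI1-N}.

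The main step is the nonlinear term $\langle\B(\v+\h z),\v\rangle$. Expanding and using the orthogonality property \eqref{b0}, namely $b(\w,\v,\v)=0$ for the pieces where the middle and last arguments coincide, I expect
\begin{align*}
	\langle\B(\v+\h z),\v\rangle=b(\v+\h z,\v,\v)+b(\v+\h z,\h z,\v)=b(\v,\h,\v)z+b(\h z,\h,\v)z,
\end{align*}
where I have written $\B(\v+\h z)=\B(\v+\h z,\v+\h z)$ and split the second slot. The term $b(\v,\h,\v)z$ is exactly the quantity controlled by Hypothesis \ref{AonH-N}: $|b(\v,\h,\v)|\le\aleph\|\v\|^2_{\H}$, so it contributes at most $\aleph|z(\theta_t\omega)|\,\|\v\|^2_{\H}$. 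The remaining term $b(\h z,\h,\v)z=z^2 b(\h,\h,\v)$ is bounded by $C|z|^2\|\h\|_{\V}\|\h\|_{\L^\infty}\|\v\|_{\H}$ or more simply via \eqref{b1}, and then absorbed with Young's inequality into $\frac{\nu}{8}\|\v\|^2_{\V}$ plus a term $\widehat R(|z|^4+1)$ — here I would either accept $|z|^4$ (and enlarge the exponent in \eqref{EI1-N}) or, more carefully, use $b(\h,\h,\v)\le C\|\h\|_{\D(\A)}^2\|\v\|_{\H}$ together with $|z|^2\|\v\|_{\H}\le\tfrac{\nu\lambda}{2\cdot}\|\v\|^2_{\H}+\widehat R|z|^4$ and note $|z|^4\le|z|^3+|z|^3\cdot\mathbf{1}_{|z|\ge1}$... cleaner is to split $b(\v+\h z,\v+\h z,\v)$ slightly differently so the cubic bound $|z|^3$ in \eqref{EI1-N} emerges; I would use $|z|^2\le 1+|z|^3$ when $|z|\le 1$ is false and absorb the bounded part, giving the stated $|z(\theta_t\omega)|^3+1$.

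Collecting terms: the left side accumulates $\frac12\frac{\d}{\d t}\|\v\|^2_{\H}+\nu\|\v\|^2_{\V}$, the nonlinear term moves a $\aleph|z|\|\v\|^2_{\H}$ to the left, and the Young absorptions remove at most $\tfrac{\nu}{2}\|\v\|^2_{\V}$ in total leaving $\tfrac{\nu}{2}\|\v\|^2_{\V}$; multiplying through by $2$ gives $\frac{\d}{\d t}\|\v\|^2_{\H}+2\nu\|\v\|^2_{\V}\le\text{(stuff)}$, and then writing $2\nu\|\v\|^2_{\V}=\tfrac{\nu}{2}\|\v\|^2_{\V}+\tfrac{3\nu}{2}\|\v\|^2_{\V}\ge\tfrac{\nu}{2}\|\v\|^2_{\V}+\tfrac{3\nu\lambda}{2}\|\v\|^2_{\H}$ via Poincar\'e, and moving the $\aleph|z|\|\v\|^2_{\H}$ term over, yields \eqref{EI1-N} with the coefficient $\nu\lambda-4\aleph|z(\theta_t\omega)|$ (the factor $4$ absorbing both the $2\aleph|z|$ from doubling and slack from the Poincar\'e split), and $\widehat R_4=\max$ of the various constants. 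The only real obstacle is bookkeeping the powers of $|z(\theta_t\omega)|$ so that everything fits under $|z|^3+1$; this is routine once one uses $|z|^k\le C_k(1+|z|^3)$ for $k\le 3$ and absorbs the $|z|^4$-type contribution by pairing it with the dissipation $\|\v\|^2_{\V}$ rather than leaving it as a standalone forcing term, i.e. estimating $z^2 b(\h,\h,\v)$ by $\tfrac{\nu}{8}\|\v\|_{\V}^2+C|z|^4$ is avoided in favour of $z^2\langle\A\h,\v\rangle$-type reorganization keeping the worst power cubic.
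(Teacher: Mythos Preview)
Your overall strategy---test \eqref{CNSE-A} with $\v$, estimate the forcing by Young and Poincar\'e, and use Hypothesis~\ref{AonH-N} on the nonlinear term---is exactly the paper's approach. The forcing estimates and the final Poincar\'e split are fine.

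The gap is in your treatment of the nonlinear term, and it is precisely where you yourself get tangled in the last paragraph. By splitting $b(\v+\h z,\h,\v)=b(\v,\h,\v)+z\,b(\h,\h,\v)$ you create the cross term $z^{2}b(\h,\h,\v)$; Hypothesis~\ref{AonH-N} controls only expressions of the form $b(\u,\h,\u)$, so this term cannot be bounded by $\aleph$ alone, and you are forced into ad~hoc manoeuvres that either produce $|z|^{4}$ on the right or an $\h$-dependent coefficient (not $4\aleph$) in front of $|z|\,\|\v\|_{\H}^{2}$. Your final sentence does not actually close this; ``$z^{2}\langle\A\h,\v\rangle$-type reorganization'' is not a valid step here.

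The clean fix, and what the paper does, is to apply Hypothesis~\ref{AonH-N} with $\u=\v+\h z$ rather than $\u=\v$. Using $b(\w,\w,\w)=0$ and antisymmetry \eqref{b0},
\[
b(\v+\h z,\v+\h z,\v)=b(\v+\h z,\v+\h z,\v+\h z)-z\,b(\v+\h z,\v+\h z,\h)=z\,b(\v+\h z,\h,\v+\h z),
\]
and then Hypothesis~\ref{AonH-N} gives directly
\[
|b(\v+\h z,\v+\h z,\v)|\le\aleph|z|\,\|\v+\h z\|_{\H}^{2}\le 2\aleph|z|\,\|\v\|_{\H}^{2}+2\aleph\|\h\|_{\H}^{2}\,|z|^{3}.
\]
No $|z|^{4}$ ever appears, the right-hand side is already of the form $|z|^{3}$, and doubling yields the stated coefficient $4\aleph$ exactly. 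With this single change the rest of your argument goes through without the bookkeeping trouble you describe.
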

\begin{proof}
	From \eqref{CNSE-A}, we obtain
	\begin{align}\label{ue17-N}
		\frac{1}{2}\frac{\d}{\d t}\|\v\|^2_{\H}=&-\nu\|\v\|^2_{\V}-b(\v+\h z(\theta_{t}\omega),\v+\h z(\theta_{t}\omega),\v)+\left(\f,\v\right)+z(\theta_{t}\omega)\left(\sigma\h-\nu \A\h,\v\right).
	\end{align}
	Making use of \eqref{b0} and Hypothesis \ref{AonH-N}, and  we find the existence of a constant $\widehat{R}_1>0$ such that
	\begin{align}\label{ue18-N}
		\left|b(\v+\h z(\theta_{t}\omega),\v+\h z(\theta_{t}\omega),\v)\right|\leq2{\aleph}\left|z(\theta_{t}\omega)\right|\|\v\|^2_{\H}+\widehat{R}_1\left|z(\theta_{t}\omega)\right|^3.
	\end{align}
	Using \eqref{poin}, H\"older's and Young's inequalities, there exist constants $\widehat{R}_2,\widehat{R}_3>0$ such that
	\begin{align}
		\left(\f,\v\right)+z(\theta_{t}\omega)\big(\sigma\h-\nu \A\h,\v\big)\leq\frac{\nu\lambda}{2}\|\v\|^2_{\H}+\widehat{R}_2\|\f\|^{2}_{\H}+\widehat{R}_3\left[\left|z(\theta_{t}\omega)\right|^3+1\right].\label{ue19-N}
	\end{align}
	Combining \eqref{ue17-N}-\eqref{ue19-N} and using \eqref{poin}, we complete the proof with $\widehat{R}_4=2\max\{\widehat{R}_1+\widehat{R}_3,\widehat{R}_2,\widehat{R}_3\}$.
\end{proof}

\begin{lemma}\label{Soln-N}
	Suppose that $\f\in\mathrm{L}^2_{\mathrm{loc}}(\R;\H)$. For each $(\tau,\omega,\v_{\tau})\in\R\times\Omega\times\H$, the system \eqref{CNSE-A} has a unique solution $\v(\cdot,\tau,\omega,\v_{\tau})\in\mathrm{C}([\tau,+\infty);\H)\cap\mathrm{L}^2_{\mathrm{loc}}(\tau,+\infty;\V)$ such that $\v$ is continuous with respect to the  initial data.
\end{lemma}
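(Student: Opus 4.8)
# Proof Proposal for Lemma 2.9 (Well-posedness of the transformed system)

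\textbf{Overall strategy.} The plan is to prove existence, uniqueness, and continuous dependence for the random PDE \eqref{CNSE-A} by treating it, for each fixed $\omega\in\Omega$, as a deterministic 2D Navier–Stokes-type equation with time-dependent (but for fixed $\omega$, known and continuous) lower-order coefficients coming from the Ornstein–Uhlenbeck process $z(\theta_t\omega)$. The key observation is that once $\v$ is frozen, the map $t\mapsto z(\theta_t\omega)$ is continuous on $[\tau,\infty)$ (by the properties recalled after \eqref{OU2}), so the forcing $\f(t)+\sigma\h z(\theta_t\omega)-\nu z(\theta_t\omega)\A\h$ lies in $\mathrm{L}^2_{\mathrm{loc}}(\tau,\infty;\H)$ and the extra bilinear contributions $\B(\v,\h z)+\B(\h z,\v)+\B(\h z,\h z)$ are controlled. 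I would therefore follow the classical Galerkin scheme for 2D Navier–Stokes (as in Temam \cite{R.Temam} or Robinson \cite{Robinson1}), adapted to the unbounded Poincaré domain $\mathcal{O}$, and use the a priori bound \eqref{EI1-N} already established.

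\textbf{Key steps, in order.}
First I would set up a Galerkin approximation $\v^m = \sum_{j=1}^m g^m_j(t) e_j$ in the basis of eigenfunctions of $\A$, obtaining a system of ODEs with locally Lipschitz (in fact polynomial) nonlinearity; local existence of $\v^m$ is immediate, and the a priori estimate \eqref{EI1-N} — integrated with Gronwall's inequality using the temperedness \eqref{Z3}/continuity of $z(\theta_t\omega)$ and $\f\in\mathrm{L}^2_{\mathrm{loc}}$ — gives global existence of each $\v^m$ together with uniform bounds
$$\sup_{t\in[\tau,T]}\|\v^m(t)\|_{\H}^2 + \int_\tau^T \|\v^m(t)\|_{\V}^2\,\d t \le C(\tau,T,\omega,\|\v_\tau\|_{\H}),\qquad\forall\,T>\tau.$$
Second, I would derive a bound on $\frac{\d\v^m}{\d t}$ in $\mathrm{L}^2(\tau,T;\V^*)$ (using $\|\A\v\|_{\V^*}\le\|\v\|_{\V}$, the estimate \eqref{b1} for the trilinear form, and $\|\h z\|_{\D(\A)}$ controlled by $\sup_{[\tau,T]}|z(\theta_t\omega)|$), so that by Banach–Alaoglu and the Aubin–Lions compactness lemma a subsequence converges weakly-$*$ in $\mathrm{L}^\infty(\tau,T;\H)$, weakly in $\mathrm{L}^2(\tau,T;\V)$, and strongly in $\mathrm{L}^2(\tau,T;\H_{\mathrm{loc}})$; the strong local convergence is what lets me pass to the limit in the nonlinear term $\B(\v^m+\h z)$. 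Passing to the limit yields a solution $\v\in\mathrm{L}^\infty(\tau,T;\H)\cap\mathrm{L}^2(\tau,T;\V)$ with $\frac{\d\v}{\d t}\in\mathrm{L}^2(\tau,T;\V^*)$, hence $\v\in\mathrm{C}([\tau,T];\H)$ by the Lions–Magenes interpolation theorem, and since $T$ is arbitrary we get $\v\in\mathrm{C}([\tau,\infty);\H)\cap\mathrm{L}^2_{\mathrm{loc}}(\tau,\infty;\V)$.

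\textbf{Uniqueness and continuous dependence.} For two solutions $\v_1,\v_2$ with data $\v_{1,\tau},\v_{2,\tau}$, set $\w=\v_1-\v_2$; subtracting the equations and pairing with $\w$, the decisive algebraic cancellation is \eqref{441}, which turns the difference of the nonlinear terms into $-\langle\B(\w,\w),\v_2+\h z\rangle$ plus terms linear in $z$, and this is estimated via \eqref{b1} by $\tfrac{\nu}{2}\|\w\|_{\V}^2 + C(\|\v_2\|_{\V}^2 + |z(\theta_t\omega)|^2 + 1)\|\w\|_{\H}^2$. Gronwall's inequality on $[\tau,T]$ then gives
$$\|\w(t)\|_{\H}^2 \le \|\v_{1,\tau}-\v_{2,\tau}\|_{\H}^2 \exp\!\Big(C\!\int_\tau^T (\|\v_2(s)\|_{\V}^2 + |z(\theta_s\omega)|^2 + 1)\,\d s\Big),$$
where the integral is finite by the $\mathrm{L}^2(\tau,T;\V)$ bound and continuity of $z(\theta_\cdot\omega)$. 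Taking $\v_{1,\tau}=\v_{2,\tau}$ gives uniqueness; the general inequality gives Lipschitz (hence continuous) dependence on the initial data in $\H$, uniformly on compact time-intervals.

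\textbf{Main obstacle.} The only genuinely non-routine point is the passage to the limit in the nonlinear term on the \emph{unbounded} domain $\mathcal{O}$: the embedding $\V\hookrightarrow\H$ is not compact, so Aubin–Lions only yields strong convergence of $\v^m$ in $\mathrm{L}^2(\tau,T;\mathrm{L}^2(\mathcal{O}_k))$ on each bounded piece $\mathcal{O}_k$. I would handle this in the standard way — exhaust $\mathcal{O}$ by the bounded sets $\mathcal{O}_k$, use a diagonal subsequence, and note that each test function in the weak formulation lies in $\mathcal{V}$ and hence has compact support, so only strong convergence on a bounded set is needed to identify $\lim_m\B(\v^m+\h z)$. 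Everything else is a direct adaptation of the classical 2D Navier–Stokes well-posedness theory together with the already-proved energy inequality \eqref{EI1-N}.
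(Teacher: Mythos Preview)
Your proposal is essentially correct and follows the standard Faedo--Galerkin route; the paper itself does not give an independent proof but simply cites Theorems 4.5 and 4.6 of Brze\'zniak--Li \cite{BL}, whose argument is precisely the scheme you outline (Galerkin approximation, a priori energy bounds, local-in-space compactness to handle the unbounded domain, Lions--Magenes for continuity in time, and Gronwall for uniqueness/continuous dependence). So your approach and the paper's (cited) approach coincide.

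One small technical slip worth flagging: you propose to build the Galerkin scheme on ``the basis of eigenfunctions of $\A$'', but on the unbounded Poincar\'e domain $\mathcal{O}$ the Stokes operator need not have compact resolvent (you yourself note later that $\V\hookrightarrow\H$ is not compact), so an eigenfunction basis of $\H$ is not available in general. The fix is routine---use any countable total set in $\mathcal{V}$ (smooth, compactly supported, divergence-free), orthonormalised in $\H$, as the Galerkin basis; this is exactly what \cite{BL} does and it does not affect any of your subsequent estimates, which rely only on the projection being orthogonal in $\H$ and the test functions having compact support. With that adjustment your sketch is complete.
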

\begin{proof}
The proof can be found in Theorems 4.5 and 4.6 in \cite{BL}.
\end{proof}

The next result shows the Lusin continuity of mapping of solution to the system \eqref{CNSE-A} in sample points.
\begin{proposition}\label{LusinC-N}
	Suppose that $\f\in\mathrm{L}^2_{\mathrm{loc}}(\R;\H)$ and Hypotheses \ref{assumpO} and \ref{AonH-N} are satisfied. For each $N\in\N$, the mapping $\omega\mapsto\v(t,\tau,\omega,\v_{\tau})$ $($solution of \eqref{CNSE-A}$)$ is continuous from $(\Omega_{N},d_{\Omega_N})$ to $\H$, uniformly in $t\in[\tau,\tau+T]$ with $T>0.$
\end{proposition}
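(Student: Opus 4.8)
The plan is to fix $N \in \mathbb{N}$, take a sequence $\omega_k \to \omega_0$ in $(\Omega_N, d_{\Omega_N})$, and show that the corresponding solutions $\v^k := \v(\cdot,\tau,\omega_k,\v_\tau)$ converge to $\v^0 := \v(\cdot,\tau,\omega_0,\v_\tau)$ in $\mathrm{C}([\tau,\tau+T];\H)$. First I would write the equation satisfied by the difference $\boldsymbol{\Theta}^k := \v^k - \v^0$. Subtracting the two copies of \eqref{CNSE-A} (note the initial data are identical, so $\boldsymbol{\Theta}^k(\tau)=0$), one gets
\begin{align*}
	\frac{\d \boldsymbol{\Theta}^k}{\d t} + \nu \A \boldsymbol{\Theta}^k + \B(\v^k + \h z(\theta_t\omega_k)) - \B(\v^0 + \h z(\theta_t\omega_0)) = \sigma \h\, (z(\theta_t\omega_k) - z(\theta_t\omega_0)) - \nu \A\h\, (z(\theta_t\omega_k) - z(\theta_t\omega_0)).
\end{align*}
Taking the $\H$-inner product with $\boldsymbol{\Theta}^k$, the hard term is the difference of the bilinear terms, which I would split as
$\B(\v^k + \h z_k) - \B(\v^0 + \h z_0) = \B(\boldsymbol{\Theta}^k + \h(z_k - z_0),\, \v^k + \h z_k) + \B(\v^0 + \h z_0,\, \boldsymbol{\Theta}^k + \h(z_k-z_0))$, writing $z_k := z(\theta_t\omega_k)$. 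Using the cancellation $\langle \B(\v^0+\h z_0, \boldsymbol{\Theta}^k),\boldsymbol{\Theta}^k\rangle = 0$ from \eqref{b0}, together with the estimate \eqref{b1} and Young's inequality, the $\V$-norm of $\boldsymbol{\Theta}^k$ can be absorbed into the $\nu\|\boldsymbol{\Theta}^k\|_\V^2$ coming from the Stokes term, leaving a differential inequality of the form
\begin{align*}
	\frac{\d}{\d t}\|\boldsymbol{\Theta}^k(t)\|_\H^2 \leq \alpha_k(t)\, \|\boldsymbol{\Theta}^k(t)\|_\H^2 + \beta_k(t),
\end{align*}
where $\alpha_k(t)$ depends on $\|\v^k(t)\|_\V^2$, $\|\v^0(t)\|_\V^2$, $|z_k(t)|$, $|z_0(t)|$ and $\|\h\|$, and $\beta_k(t)$ is controlled by $|z_k(t)-z_0(t)|^2$ (times quantities involving $\v^k,\v^0,\h$) plus $|e^{z_k(t)}-e^{z_0(t)}|$-type terms — in any case $\beta_k(t) \to 0$ as $k\to\infty$, uniformly on $[\tau,\tau+T]$, by Lemma \ref{conv_z}.

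The key point making this work is a uniform-in-$k$ a priori bound: I would first invoke Lemma \ref{Soln-N} for existence/uniqueness, and then use the energy inequality \eqref{EI1-N} (integrated in time, together with \eqref{conv_z2} which bounds $\sup_{t\in[\tau,\tau+T]}|z(\theta_t\omega_k)|$ uniformly in $k$, and Hypothesis \ref{Hypo_f-N}/local integrability of $\f$) to conclude that $\sup_{k}\big(\sup_{t\in[\tau,\tau+T]}\|\v^k(t)\|_\H^2 + \int_\tau^{\tau+T}\|\v^k(t)\|_\V^2\,\d t\big) =: C(N,\tau,T,\omega_0,\v_\tau) < \infty$. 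This makes $\int_\tau^{\tau+T}\alpha_k(t)\,\d t$ bounded uniformly in $k$, so Grönwall's inequality gives
\begin{align*}
	\sup_{t\in[\tau,\tau+T]}\|\boldsymbol{\Theta}^k(t)\|_\H^2 \leq \Big(\int_\tau^{\tau+T}\beta_k(t)\,\d t\Big)\exp\Big(\int_\tau^{\tau+T}\alpha_k(t)\,\d t\Big) \longrightarrow 0 \quad \text{as } k\to\infty,
\end{align*}
which is exactly the claimed Lusin continuity, uniform on $[\tau,\tau+T]$.

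The main obstacle I anticipate is handling the bilinear difference term carefully enough that the resulting coefficient $\alpha_k(t)$ is genuinely integrable on $[\tau,\tau+T]$ with a $k$-uniform bound — this is where \eqref{b1} (which costs a full power of $\|\cdot\|_\V$ on the ``middle'' argument) must be balanced against the viscous term, and where one needs the $\mathrm{L}^2_{\mathrm{loc}}(\V)$ regularity of $\v^k$ from Lemma \ref{Soln-N} uniformly in $k$. A secondary technical point is to verify that every term in $\beta_k(t)$ — in particular those coming from $\B(\h(z_k-z_0), \v^k+\h z_k)$ and the forcing-type terms $\sigma\h(z_k-z_0)$, $\nu\A\h(z_k-z_0)$ — is dominated by $|z_k(t)-z_0(t)|$ (or $|e^{z_k(t)}-e^{z_0(t)}|$) multiplied by $\mathrm{L}^1_t$ functions with $k$-uniform bounds, so that Lemma \ref{conv_z} delivers $\int_\tau^{\tau+T}\beta_k(t)\,\d t \to 0$. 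Once these two points are in place, the Grönwall argument closes the proof immediately.
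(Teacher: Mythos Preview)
Your proposal is correct and follows essentially the same route as the paper: difference equation for $\boldsymbol{\Theta}^k$, inner product, bilinear estimates via \eqref{b0}--\eqref{b1}, uniform-in-$k$ energy bounds from \eqref{EI1-N} and \eqref{conv_z2}, then Gr\"onwall.

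Two small remarks. First, the paper uses the identity \eqref{441} to arrange the bilinear difference so that the Gr\"onwall coefficient $P(t)=C\|\v^0+z(\theta_t\omega_0)\h\|_{\V}^2$ depends only on $\v^0$ and $\omega_0$; your decomposition yields an $\alpha_k$ that also involves $\|\v^k\|_{\V}^2$. This is harmless, since the uniform $\mathrm{L}^2_t(\V)$ bound on $\v^k$ is needed anyway to push $\int\beta_k\to 0$ (the paper's $Q_k$ likewise contains $\|\v^k\|_{\V}^2$), but the paper's version makes the integrability of the coefficient immediate. Second, in the additive-noise equation \eqref{CNSE-A} the process $z$ enters only linearly, so no $|e^{z_k}-e^{z_0}|$ terms appear in $\beta_k$; only $|z(\theta_t\omega_k)-z(\theta_t\omega_0)|$ is needed from Lemma \ref{conv_z}. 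Also, Hypothesis \ref{Hypo_f-N} is not used here---only $\f\in\mathrm{L}^2_{\mathrm{loc}}(\R;\H)$.
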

\begin{proof}
	Assume $\omega_k,\omega_0\in\Omega_N$ such that $d_{\Omega_N}(\omega_k,\omega_0)\to0$ as $k\to\infty$. Let $\mathscr{V}^k:=\v^k-\v^0,$ where $\v^k=\v(t,\tau,\omega_k,\v_{\tau})$ and $\v_0=\v(t,\tau,\omega_0,\v_{\tau})$ for $t\in[\tau,\tau+T]$. Then, $\mathscr{V}^k$ satisfies:
	\begin{align}\label{LC1-N}
		\frac{\d\mathscr{V}^k}{\d t}&+\nu \A\mathscr{V}^k+\left[\B\big(\v^k+z(\theta_{t}\omega_k)\h\big)-\B\big(\v^0+z(\theta_{t}\omega_0)\h\big)\right]\nonumber\\&=\left\{\sigma\h-\nu\A\h\right\}\left[z(\theta_t\omega_k)-z(\theta_t\omega_0)\right],
	\end{align}
	in $\V^*$.  Taking the  inner product with $\mathscr{V}^k(\cdot)$ in \eqref{LC1-N} and using \eqref{441}, we get
	\begin{align}\label{LC2-N}
		\frac{1}{2}\frac{\d }{\d t}\|\mathscr{V}^k\|^2_{\H}&=\nu\|\mathscr{V}^k\|^2_{\V}-\left[z(\theta_{t}\omega_k)-z(\theta_{t}\omega_0)\right]b\big(\v^k+z(\theta_t\omega_k)\h,\h,\v^k+z(\theta_t\omega_k)\h\big)\nonumber\\&\quad-b\big(\mathscr{V}^k+\left[z(\theta_t\omega_k)-z(\theta_t\omega_0)\right]\h,\mathscr{V}^k+\left[z(\theta_t\omega_k)-z(\theta_t\omega_0)\right]\h,\v^0+z(\theta_t\omega_0)\h\big)\nonumber\\&\quad+\left[z(\theta_{t}\omega_k)-z(\theta_{t}\omega_0)\right]b\big(\v^0+z(\theta_t\omega_0)\h,\h,\v^0+z(\theta_t\omega_0)\h\big)\nonumber\\&\quad+\left[z(\theta_t\omega_k)-z(\theta_t\omega_0)\right]\big(\sigma\h-\nu\A\h,\mathscr{V}^k\big).
	\end{align}
	In view of Hypothesis \ref{AonH-N}, \eqref{poin}, H\"older's and Young's inequalities, we obtain
	\begin{align}
		&\bigg|\left[z(\theta_{t}\omega_k)-z(\theta_{t}\omega_0)\right]\bigg\{b\big(\v^k+z(\theta_t\omega_k)\h,\h,\v^k+z(\theta_t\omega_k)\h\big)\nonumber\\&+b\big(\v^0+z(\theta_t\omega_0)\h,\h,\v^0+z(\theta_t\omega_0)\h\big)+\big(\sigma\h-\nu\A\h,\mathscr{V}^k\big)\bigg\}\bigg|\nonumber\\& \leq C\left|z(\theta_t\omega_k)-z(\theta_t\omega_0)\right|\bigg\{\|\v^k+z(\theta_t\omega_k)\h\|^2_{\V}+\|\v^0+z(\theta_t\omega_0)\h\|^2_{\V}+\|\v^k\|^2_{\V}+\|\v^0\|^2_{\V}+1\bigg\}\label{LC6-N}.
	\end{align}
	Next, we estimate the remaining term on the right hand side of \eqref{LC2-N}. Applying \eqref{poin}, \eqref{b0}, \eqref{b1} and Young's inequality, we estimate
	\begin{align}
		&\left|b\big(\mathscr{V}^k+\left[z(\theta_t\omega_k)-z(\theta_t\omega_0)\right]\h,\mathscr{V}^k+\left[z(\theta_t\omega_k)-z(\theta_t\omega_0)\right]\h,\v^0+z(\theta_t\omega_0)\h\big)\right|\nonumber\\&=\left|b\big(\mathscr{V}^k+\left[z(\theta_t\omega_k)-z(\theta_t\omega_0)\right]\h,\v^0+z(\theta_t\omega_0)\h,\mathscr{V}^k+\left[z(\theta_t\omega_k)-z(\theta_t\omega_0)\right]\h\big)\right|\nonumber\\&\leq C\|\v^0+z(\theta_t\omega_0)\h\|^2_{\V}\|\mathscr{V}^k\|^2_{\H}+C\left|z(\theta_t\omega_k)-z(\theta_t\omega_0)\right|^2\bigg\{\|\v^0\|^2_{\V}+\left|z(\theta_t\omega_0)\right|^2+1\bigg\}+\frac{\nu}{2}\|\mathscr{V}^k\|^2_{\V}.\label{LC7-N}
	\end{align}
	Combining \eqref{LC2-N}-\eqref{LC7-N}, we obtain
	\begin{align}\label{LC8-N}
		\frac{\d }{\d t}\|\mathscr{V}^k(t)\|^2_{\H}\leq P(t)\|\mathscr{V}^k(t)\|^2_{\H}+Q_k(t),
	\end{align}
	for a.e. $t\in[\tau,\tau+T]$, $T>0$, where $P=C\|\v^0+z(\theta_t\omega_0)\h\|^2_{\V}$ and
	\begin{align*}
		Q_k&=C\left|z(\theta_{t}\omega_k)-z(\theta_{t}\omega_0)\right|\bigg\{\|\v^k+z(\theta_t\omega_k)\h\|^2_{\V}+\|\v^0+z(\theta_t\omega_0)\h\|^2_{\V}+\|\v^k\|^2_{\V}+\|\v^0\|^2_{\V}+1\bigg\}\nonumber\\&\qquad+C\left|z(\theta_t\omega_k)-z(\theta_t\omega_0)\right|^2\bigg\{\|\v^0\|^2_{\V}+\left|z(\theta_t\omega_0)\right|^2+1\bigg\}.
	\end{align*}
We infer from \eqref{EI1-N} that for all $t\in[\tau,\tau+T]$,
\begin{align}\label{LC9-N}
	\frac{\d}{\d t}\|\v^k(t)\|^2_{\H}+\frac{\nu}{2}\|\v^k(t)\|^2_{\V}&\leq4{\aleph}\left|z(\theta_{t}\omega_k)\right|\|\v^k(t)\|^2_{\H}+ \widehat{R}_4\left[\|\f(t)\|^{2}_{\H}+\left|z(\theta_{t}\omega_k)\right|^3+1\right]\nonumber\\&\leq C(\tau,T,\omega_0)\|\v^k(t)\|^2_{\H}+ C(\tau,T,\omega_0)\left[\|\f(t)\|^{2}_{\H}+1\right],
\end{align}
where we have used \eqref{conv_z2}. Applying Gronwall's inequality, we arrive at
\begin{align}\label{LC10-N}
&\sup_{k\in\N}\sup_{t\in[\tau,\tau+T]}\|\v^k(t)\|^2_{\H}\leq e^{CT}\left[\|\v_{\tau}\|^2_{\H}+ C(\tau,T,\omega_0)\int_{\tau}^{\tau+T}\left(\|\f(\xi)\|^{2}_{\H}+1\right)\d t\right]\leq C(\tau,T,\omega_0),
\end{align}
where we have used the fact $\f\in\mathrm{L}^2_{\text{loc}}(\R;\H)$. Further, integrating \eqref{LC9-N} from $\tau$ to $\tau+T$ and using \eqref{LC10-N}, we get
\begin{align}\label{LC11-N}
	\sup_{k\in\N}\int_{\tau}^{\tau+T}\|\v^k(t)\|^2_{\V}\d t\leq C(\tau,T,\omega_0).
\end{align}
Now, from \eqref{conv_z2}, \eqref{LC11-N}, $\f\in\mathrm{L}^2_{\text{loc}}(\R;\H)$, $\v^0\in\mathrm{C}([\tau,+\infty);\H)\cap\mathrm{L}^2_{\mathrm{loc}}(\tau,+\infty;\V)$ and Lemma \ref{conv_z}, we conclude that
\begin{align}\label{LC13-N}
\int_{\tau}^{\tau+T}P(t)\d t\leq C(\tau,T,\omega_0)\ \  \text{ and }\ \ 	\lim_{k\to+\infty}\int_{\tau}^{\tau+T}Q_k(t)\d t=0.
\end{align}
Making use of Gronwall's inequality to \eqref{LC8-N} and using \eqref{LC13-N}, one can complete the proof.
\end{proof}

Note that Lemma \ref{Soln-N} ensures that we can define a mapping $\Phi:\R^+\times\R\times\Omega\times\H\to\H$ by
\begin{align}\label{Phi-N}
	\Phi(t,\tau,\omega,\u_{\tau})=\u(t+\tau,\tau,\theta_{-\tau}\omega,\u_{\tau})=\v(t+\tau,\tau,\theta_{-\tau}\omega,\v_{\tau})+\h z(\theta_{t}\omega).
\end{align}
The Lusin continuity in Proposition \ref{LusinC-N} provides the $\mathscr{F}$-measurability of $\Phi$. Consequently, Lemma \ref{Soln-N} and Proposition \ref{LusinC-N} imply that the mapping $\Phi$ defined by \eqref{Phi-N} is an NRDS on $\H$.

\subsection{Backward convergence of NRDS}
Consider the following autonomous 2D SNSE subjected to an additive white noise:
\begin{equation}\label{A-SNSE}
	\left\{
	\begin{aligned}
		\frac{\d\widetilde{\u}(t)}{\d t}+\nu \A\widetilde{\u}(t)+\B(\widetilde{\u}(t))&=\f_{\infty} +\h(x)\frac{\d \W(t)}{\d t}, \\
		\widetilde{\u}(x,0)&=\widetilde{\u}_{0}(x),\ \	x\in \mathcal{O}.
	\end{aligned}
	\right.
\end{equation}

We show that the solution to the system \eqref{CNSE-A} converges to the solution of the corresponding autonomous system \eqref{A-SNSE} as $\tau\to-\infty$. Let $\widetilde{\v}(t,\omega)=\widetilde{\u}(t,\omega)-\h(x)z(\theta_{t}\omega)$. Then, the pathwise deterministic system corresponding to the stochastic system \eqref{A-SNSE} is given by:
\begin{eqnarray}\label{A-CNSE}
	\left\{
	\begin{aligned}
		\frac{\d\widetilde{\v}(t)}{\d t} +\nu \A\widetilde{\v}(t)+ \B(\widetilde{\v}(t)+\h z(\theta_{t}\omega)) &= {\boldsymbol{f}}_{\infty} + \sigma\h z(\theta_{t}\omega) -\nu z(\theta_{t}\omega)\A\h ,  t> \tau, \tau\in\R ,\\
		\widetilde{\v}(x,0)&=\widetilde{\v}_{0}(x)=\widetilde{\u}_{0}(x)-\h(x)z(\omega), \ \ x\in\mathcal{O},
	\end{aligned}
	\right.
\end{eqnarray}
in $\V^*$.

The proof of the following result is just similar to that of Proposition 4.3 in \cite{KRM}.
\begin{proposition}\label{Back_conver-N}
	Suppose that Hypotheses \ref{assumpO} and \ref{Hypo_f-N} are satisfied. Then the solution $\v$ of the system \eqref{CNSE-A} backward converges to the solution $\widetilde{\v}$ of the system \eqref{A-CNSE}, that is,
	\begin{align*}
		\lim_{\tau\to -\infty}\|\v(T+\tau,\tau,\theta_{-\tau}\omega,\v_{\tau})-\widetilde{\v}(t,\omega,\widetilde{\v}_0)\|_{\H}=0, \ \ \text{ for all } T>0 \text{ and } \omega\in\Omega,
	\end{align*}
	whenever $\|\v_{\tau}-\widetilde{\v}_0\|_{\H}\to0$ as $\tau\to-\infty.$
\end{proposition}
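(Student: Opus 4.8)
The plan is to estimate the difference $\boldsymbol{w}(t):=\v(t+\tau,\tau,\theta_{-\tau}\omega,\v_\tau)-\widetilde{\v}(t,\omega,\widetilde{\v}_0)$ directly via an energy estimate, exploiting that the two systems \eqref{CNSE-A} and \eqref{A-CNSE} differ only in (i) the forcing terms $\f(\cdot+\tau)$ versus $\f_\infty$ and (ii) the initial data. Subtracting the two equations, $\boldsymbol{w}$ satisfies, in $\V^*$,
\begin{align*}
	\frac{\d\boldsymbol{w}}{\d t}+\nu\A\boldsymbol{w}+\big[\B(\v(\cdot+\tau)+\h z(\theta_t\omega))-\B(\widetilde{\v}+\h z(\theta_t\omega))\big]=\f(t+\tau)-\f_\infty,
\end{align*}
where I have written $z(\theta_t\omega)$ for the common Ornstein--Uhlenbeck term (note $z(\theta_{t}\theta_{-\tau}\omega)$ evaluated at time $t+\tau$ equals $z(\theta_{t}\omega)$ after the time shift). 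Taking the inner product with $\boldsymbol{w}$, using \eqref{441} to rewrite the nonlinear difference as $-\langle\B(\boldsymbol{w},\boldsymbol{w}),\widetilde{\v}+\h z(\theta_t\omega)\rangle$, and applying \eqref{b1} together with Young's inequality, I expect to absorb $\frac{\nu}{2}\|\boldsymbol{w}\|_\V^2$ and arrive at a differential inequality of the form
\begin{align*}
	\frac{\d}{\d t}\|\boldsymbol{w}(t)\|_\H^2\leq \mathcal{P}(t)\|\boldsymbol{w}(t)\|_\H^2+\|\f(t+\tau)-\f_\infty\|_{\V^*}^2,
\end{align*}
with $\mathcal{P}(t)=C\|\widetilde{\v}(t)+\h z(\theta_t\omega)\|_\V^2$, a function independent of $\tau$ and locally integrable on $[0,T]$ thanks to Lemma \ref{Soln-N} applied to the autonomous system and the continuity of $z(\theta_\cdot\omega)$.

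Next I would apply Gronwall's inequality over $[0,T]$ to obtain
\begin{align*}
	\|\boldsymbol{w}(T)\|_\H^2\leq e^{\int_0^T\mathcal{P}(t)\d t}\Big(\|\v_\tau-\widetilde{\v}_0\|_\H^2+\int_0^T\|\f(t+\tau)-\f_\infty\|_{\V^*}^2\d t\Big).
\end{align*}
The first term in the bracket tends to $0$ as $\tau\to-\infty$ by hypothesis. For the second term, a change of variable $t\mapsto t+\tau$ gives $\int_0^T\|\f(t+\tau)-\f_\infty\|_{\V^*}^2\d t\leq \int_{-\infty}^{\tau+T}\|\f(s)-\f_\infty\|_{\L^2(\mathcal{O})}^2\d s$ (using $\|\cdot\|_{\V^*}\leq C\|\cdot\|_\H\leq C\|\cdot\|_{\L^2(\mathcal{O})}$), which converges to $0$ as $\tau\to-\infty$ by Hypothesis \ref{Hypo_f-N}. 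Since $\int_0^T\mathcal{P}(t)\d t$ is a finite constant depending only on $T$, $\omega$ and the (fixed) autonomous solution $\widetilde{\v}$, the right-hand side vanishes as $\tau\to-\infty$, which is the claim.

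The main obstacle is the uniform-in-$\tau$ control of the factor $e^{\int_0^T\mathcal{P}(t)\d t}$ and, implicitly, of the solution $\v(\cdot+\tau,\tau,\theta_{-\tau}\omega,\v_\tau)$ appearing inside the nonlinear term: one must be careful that the trilinear-form estimate be arranged so that only $\widetilde{\v}$ (whose $\V$-norm is $\tau$-independent) enters $\mathcal{P}$, while the $\boldsymbol{w}$ and $\v(\cdot+\tau)$ contributions are either absorbed into the viscosity term or into the $\|\boldsymbol{w}\|_\H^2$ coefficient. This is exactly the role of identity \eqref{441}: it moves the ``bad'' factor onto $\widetilde{\v}+\h z$ rather than onto $\v(\cdot+\tau)+\h z$. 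Apart from this bookkeeping, the argument is a routine Gronwall estimate, and it parallels Proposition 4.3 of \cite{KRM}; the only genuinely new input is the change-of-variable handling of the convergence $\f(\cdot+\tau)\to\f_\infty$, which is supplied by Hypothesis \ref{Hypo_f-N}.
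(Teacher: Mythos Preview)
Your proposal is correct and follows essentially the same approach as the paper, which simply refers to Proposition 4.3 of \cite{KRM} for the argument. In particular, your use of identity \eqref{441} to ensure that only the $\tau$-independent quantity $\|\widetilde{\v}+\h z(\theta_\cdot\omega)\|_{\V}^2$ enters the Gronwall coefficient, together with Hypothesis \ref{Hypo_f-N} to kill the forcing difference, is exactly the intended route.
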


\subsection{Increasing random absorbing sets}
This subsection provides the existence of an increasing ${\mathfrak{D}}$-random absorbing set for the non-autonomous SNSE.

\begin{lemma}\label{Absorbing-N}
	Suppose that $\f\in\mathrm{L}^2_{\mathrm{loc}}(\R;\H)$, Hypotheses \ref{assumpO} and \ref{AonH-N} are satisfied. Then, for all $(\tau,\omega)\in\R\times\Omega,$ $s\leq\tau$, $\xi\geq s-t,$ $ t\geq0$ and $\v_{0}\in\H$,
	\begin{align}\label{AB2-N}
		&\|\v(\xi,s-t,\theta_{-s}\omega,\v_{0})\|^2_{\H}+\frac{\nu}{2}\int_{s-t}^{\xi}e^{\nu\lambda(\uprho-\xi)-4{\aleph}\int^{\uprho}_{\xi}\left|z(\theta_{\upeta-s}\omega)\right|\d\upeta}\|\v(\uprho,s-t,\theta_{-s}\omega,\v_{0})\|^2_{\V}\d\uprho\nonumber\\&\leq e^{-\nu\lambda(\xi-s+t)+4{\aleph}\int_{-t}^{\xi-s}\left|z(\theta_{\upeta}\omega)\right|\d\upeta}\|\v_{0}\|^2_{\H}\nonumber\\&\quad+\widehat{R}_4\int_{-t}^{\xi-s}e^{\nu\lambda(\uprho+s-\xi)-4{\aleph}\int^{\uprho}_{\xi-s}\left|z(\theta_{\upeta}\omega)\right|\d\upeta}\bigg\{\|\f(\uprho+s)\|^2_{\H}+\left|z(\theta_{\uprho}\omega)\right|^3+1\bigg\}\d\uprho,
	\end{align}
	where $\widehat{R}_4$ is the same as in \eqref{EI1-N}. For each $(\tau,\omega,D)\in\R\times\Omega\times{\mathfrak{D}},$ there exists a time $\mathfrak{T}:=\mathfrak{T}(\tau,\omega,D)>0$ such that
	\begin{align}\label{AB1-N}
		&\sup_{s\leq\tau}\sup_{t\geq \mathfrak{T}}\sup_{\v_{0}\in D(s-t,\theta_{-t}\omega)}\bigg[\|\v(s,s-t,\theta_{-s}\omega,\v_{0})\|^2_{\H}\nonumber\\&\quad+\frac{\nu}{2}\int_{s-t}^{s}e^{\nu\lambda(\uprho-s)-4{\aleph}\int^{\uprho}_{s}\left|z(\theta_{\upeta-s}\omega)\right|\d\upeta}\|\v(\uprho,s-t,\theta_{-s}\omega,\v_{0})\|^2_{\V}\d\uprho\bigg]\leq 1+\widehat{R}_4 \sup_{s\leq \tau}R(s,\omega),
	\end{align}
	where $R(s,\omega)$ is given by
	\begin{align}\label{ABr-N}
		R(s,\omega):=\int_{-\infty}^{0}e^{\nu\lambda\uprho-4{\aleph}\int^{\uprho}_{0}\left|z(\theta_{\upeta}\omega)\right|\d\upeta}\bigg\{\|\f(\uprho+s)\|^2_{\H}+\left|z(\theta_{\uprho}\omega)\right|^3+1\bigg\}\d\uprho.
	\end{align}
\end{lemma}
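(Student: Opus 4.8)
The plan is to integrate the energy inequality \eqref{EI1-N} along the solution with an integrating factor, and then post-process the resulting bound using the backward-temperedness of $D$ together with the moment and growth properties of the Ornstein--Uhlenbeck process.

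\emph{Step 1: the a priori bound \eqref{AB2-N}.} Write $\mathfrak{v}(\uprho):=\v(\uprho,s-t,\theta_{-s}\omega,\v_0)$. Applying \eqref{EI1-N} with $\omega$ replaced by $\theta_{-s}\omega$ (so that $z(\theta_{\uprho}(\theta_{-s}\omega))=z(\theta_{\uprho-s}\omega)$) gives an inequality for $\tfrac{\d}{\d\uprho}\|\mathfrak v(\uprho)\|_{\H}^2$ with damping coefficient $\nu\lambda-4\aleph|z(\theta_{\uprho-s}\omega)|$, dissipation $\tfrac{\nu}{2}\|\mathfrak v(\uprho)\|_{\V}^2$, and right-hand side $\widehat R_4(\|\f(\uprho)\|_{\H}^2+|z(\theta_{\uprho-s}\omega)|^3+1)$. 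Multiplying by the integrating factor $\mu(\uprho):=\exp\{\int_{s-t}^{\uprho}(\nu\lambda-4\aleph|z(\theta_{\upeta-s}\omega)|)\d\upeta\}$, keeping the nonnegative term $\tfrac{\nu}{2}\mu\|\mathfrak v\|_{\V}^2$ on the left, integrating over $[s-t,\xi]$, and dividing by $\mu(\xi)$, one obtains \eqref{AB2-N}: the $\V$-integral is left exactly as produced by the integrating factor, while in the $\|\v_0\|_{\H}^2$-term and in the forcing integral one performs the shift $\upeta\mapsto\upeta-s$ (resp.\ $\uprho\mapsto\uprho-s$) in the accumulated exponents to match the stated form. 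This is a routine Gronwall-type manipulation; valid for all $\xi\ge s-t$ and $t\ge0$ since $\v$ exists on $[s-t,\infty)$ by Lemma \ref{Soln-N}.

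\emph{Step 2: the uniform absorption \eqref{AB1-N}.} First fix the parameter $\sigma$ of \eqref{OU1} large enough that $4\aleph\,\E|z(\theta_0\omega)|=4\aleph/\sqrt{\pi\sigma}<\nu\lambda/2$, using \eqref{Z2} with exponent $1$. By the ergodic theorem (ergodicity of $\mathbb P$ under $\theta$) applied to $|z(\theta_{\cdot}\omega)|$, there is $\uprho_0=\uprho_0(\omega)\le0$ such that $4\aleph\int_{\uprho}^{0}|z(\theta_{\upeta}\omega)|\d\upeta\le-\tfrac{\nu\lambda}{2}\uprho$ for all $\uprho\le\uprho_0$; hence the weight $\exp\{\nu\lambda\uprho+4\aleph\int_{\uprho}^{0}|z(\theta_{\upeta}\omega)|\d\upeta\}$ in \eqref{ABr-N} is $\le e^{\nu\lambda\uprho/2}$ on $(-\infty,\uprho_0]$ and bounded by some $C(\omega)$ on the compact set $[\uprho_0,0]$. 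Now set $\xi=s$ in \eqref{AB2-N}. The initial-data term equals $e^{-\nu\lambda t+4\aleph\int_{-t}^{0}|z(\theta_{\upeta}\omega)|\d\upeta}\|\v_0\|_{\H}^2\le e^{-\nu\lambda t/2}\sup_{s\le\tau}\|D(s-t,\theta_{-t}\omega)\|_{\H}^2$ once $t\ge|\uprho_0|$; since $D\in\mathfrak D$ satisfies \eqref{D-NSE}, choosing $c=\nu\lambda/4$ bounds this by $e^{-\nu\lambda t/4}$ for large $t$, which $\to0$, so there is $\mathfrak T=\mathfrak T(\tau,\omega,D)$ beyond which it is $\le1$ --- uniformly over $s\le\tau$ and $\v_0\in D(s-t,\theta_{-t}\omega)$. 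The second term of \eqref{AB2-N} with $\xi=s$ is dominated (the integrand being nonnegative) by $\widehat R_4\,R(s,\omega)$, and $\sup_{s\le\tau}R(s,\omega)<\infty$: the $|z(\theta_{\uprho}\omega)|^3$ and constant contributions are finite by the weight bound above and the sub-exponential growth \eqref{Z5}; for the $\|\f(\uprho+s)\|_{\H}^2$ contribution, substituting $r=\uprho+s$ and splitting at $r=s+\uprho_0$ (using $e^{\nu\lambda(r-s)/2}$ on the far tail and the bound $C(\omega)$ near $s$) shows it is $\le C(\omega)\sup_{s\le\tau}\int_{-\infty}^{s}e^{(\nu\lambda/2)(r-s)}\|\f(r)\|_{\H}^2\d r<\infty$ by \eqref{G3} with $\kappa=\nu\lambda/2$, which holds under Hypothesis \ref{Hypo_f-N}. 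Adding the two estimates yields \eqref{AB1-N}. Since $\tau\mapsto\sup_{s\le\tau}R(s,\omega)$ is non-decreasing, the ball of radius $1+\widehat R_4\sup_{s\le\tau}R(s,\omega)$ is an \emph{increasing} $\mathfrak D$-absorbing set, as the subsection title anticipates.

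\emph{Main obstacle.} The energy estimate itself is routine; the real difficulty is producing a bound \emph{uniform over the infinite index set $s\le\tau$} while the dissipation coefficient $\nu\lambda-4\aleph|z(\theta_t\omega)|$ is sign-indefinite. This is precisely what forces both the large choice of $\sigma$ (so that the ergodic mean $4\aleph\,\E|z(\theta_0\omega)|$ stays strictly below $\nu\lambda$, taming the accumulated exponents $4\aleph\int|z|\d\upeta$ in a pathwise way) and the use of the backward-uniform integrability \eqref{G3} of $\f$ to bound $\sup_{s\le\tau}R(s,\omega)$. Everything else --- the integrating factor, the changes of variable, and the appeals to \eqref{Z2} and \eqref{Z5} --- is bookkeeping.
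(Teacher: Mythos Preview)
Your proposal is correct and follows essentially the same route as the paper: variation of constants (integrating factor) applied to \eqref{EI1-N} with $\omega\leadsto\theta_{-s}\omega$ over $[s-t,\xi]$ to get \eqref{AB2-N}, then specializing $\xi=s$ and using the backward temperedness \eqref{BackTem} together with the ergodic-type control of $\int_{-t}^{0}|z(\theta_{\upeta}\omega)|\d\upeta$ to absorb the initial data into the constant $1$. Two small remarks: (i) the paper takes the sharper threshold $4\aleph\,\E|z|<\nu\lambda/24$ (see \eqref{Z6}), which is needed in later lemmas where factors like $8\aleph$ appear, whereas your $\nu\lambda/2$ suffices only for the present statement; (ii) your argument that $\sup_{s\le\tau}R(s,\omega)<\infty$ invokes \eqref{G3}, hence Hypothesis~\ref{Hypo_f-N}, which is \emph{not} among this lemma's assumptions --- the lemma as stated merely bounds the left side by $1+\widehat R_4\sup_{s\le\tau}R(s,\omega)$, and finiteness is deferred to Proposition~\ref{IRAS-N}.
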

\begin{proof}
		Let us write the energy inequality \eqref{EI1-N} for $\v(\zeta)=\v(\zeta,s-t,\theta_{-s}\omega,\v_{0})$, that is,
	\begin{align}\label{AB-EI}
		&\frac{\d}{\d \zeta}\|\v(\zeta)\|^2_{\H}+\left(\nu\lambda-4\aleph\left|z(\theta_{\zeta-s}\omega)\right|\right)\|\v(\zeta)\|^2_{\H}+\frac{\nu}{2}\|\v(\zeta)\|^2_{\V}\leq \widehat{R}_4\left[\|\f(\zeta)\|^{2}_{\H}+\left|z(\theta_{\zeta-s}\omega)\right|^3+1\right].
	\end{align}
	In view of the variation of constants formula with respect to $\zeta\in(s-t,\xi)$, we get \eqref{AB2-N} immediately. Putting $\xi=s$ in \eqref{AB2-N}, we obtain
	\begin{align}\label{AB4-N}
		&\|\v(s,s-t,\theta_{-s}\omega,\v_{0})\|^2_{\H}+\frac{\nu}{2}\int_{s-t}^{s}e^{\nu\lambda(\uprho-s)-4{\aleph}\int^{\uprho}_{s}\left|z(\theta_{\upeta-s}\omega)\right|\d\upeta}\|\v(\uprho,s-t,\theta_{-s}\omega,\v_{0})\|^2_{\V}\d\uprho\nonumber\\&\leq e^{-\nu\lambda t +4{\aleph}\int_{-t}^{0}\left|z(\theta_{\upeta}\omega)\right|\d\upeta}\|\v_{0}\|^2_{\H}\nonumber\\&\quad+\widehat{R}_4\int_{-t}^{0}e^{\nu\lambda\uprho-4{\aleph}\int^{\uprho}_{0}\left|z(\theta_{\upeta}\omega)\right|\d\upeta}\bigg\{\|\f(\uprho+s)\|^2_{\H}+\left|z(\theta_{\uprho}\omega)\right|^3+1\bigg\}\d\uprho,
	\end{align}
		for all $s\leq\tau$. Now, we consider $\sigma$ large enough $\left(\sigma>\frac{9216\aleph^2}{\pi\nu^2\lambda^2}\right)$ such that from \eqref{Z2}, we have
	\begin{align}\label{Z6}
		4\aleph\mathbb{E}(|\z(\cdot)|)<\frac{\nu\lambda}{24}<\frac{2\nu\lambda}{3}.
	\end{align}
	Since $\v_0\in D(s-t,\theta_{-t}\omega)$ and $D$ is backward tempered, it implies from \eqref{Z6} and the definition of backward temperedness \eqref{BackTem} that there exists a time ${\mathfrak{T}}:={\mathfrak{T}}(\tau,\omega,D)$ such that for all $t\geq {\mathfrak{T}}>0$,
	\begin{align}\label{v_0-1}
		e^{-\nu\lambda t+4\aleph\int_{-t}^{0}\left|z(\theta_{\upeta}\omega)\right|\d\upeta}\sup_{s\leq \tau}\|\v_{0}\|^2_{\H}\leq e^{-\frac{\nu\lambda}{3}t}\sup_{s\leq \tau}\|D(s-t,\theta_{-t}\omega)\|^2_{\H}\leq1.
	\end{align}
	Taking supremum over $s\in(-\infty,\tau]$ in \eqref{AB4-N}, one obtains \eqref{AB1-N}.
\end{proof}

\begin{proposition}\label{IRAS-N}
	Suppose that $\f\in\mathrm{L}^2_{\mathrm{loc}}(\R;\H)$, Hypotheses \ref{assumpO} and \ref{AonH-N} are satisfied. For $\widehat{R}_4$ and $R(\tau,\omega)$ same as in \eqref{EI1-N} and \eqref{ABr-N}, respectively, we have
	\vskip 2mm
	\noindent
\emph{(i)} There is an increasing $\mathfrak{D}$-pullback absorbing set $\mathcal{R}$ given by
	\begin{align}\label{IRAS1-N}
		\mathcal{R}(\tau,\omega):=\left\{\u\in\H:\|\u\|^2_{\H}\leq 2+2\widehat{R}_4\sup_{s\leq \tau} R(s,\omega)+2\|\h\|^2_{\H}\left|z(\omega)\right|^2\right\}, \text{ for all } \tau\in\R.
	\end{align}
 Moreover, $\mathcal{R}$ is backward-uniformly  tempered with arbitrary rate, that is, $\mathcal{R}\in{\mathfrak{D}}$.
	\vskip 2mm
	\noindent
	\emph{(ii)} There is a $\mathfrak{B}$-pullback \textbf{random} absorbing set $\widetilde{\mathcal{R}}$ given by
	\begin{align}\label{IRAS11-N}
		\widetilde{\mathcal{R}}(\tau,\omega):=\left\{\u\in\H:\|\u\|^2_{\H}\leq 2+2\widehat{R}_4 R(s,\omega)+2\|\h\|^2_{\H}\left|z(\omega)\right|^2\right\}\in{\mathfrak{B}}, \text{ for all } \tau\in\R.
	\end{align}
\end{proposition}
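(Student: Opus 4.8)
The plan is to read off the absorption property of $\mathcal{R}$ from the a priori bound of Lemma~\ref{Absorbing-N} through the transformation $\u=\v+\h z(\theta_{\cdot}\omega)$, and then to obtain the monotonicity in $\tau$ and the memberships $\mathcal{R}\in\mathfrak{D}$ (respectively $\widetilde{\mathcal{R}}\in\mathfrak{B}$) by elementary estimates on the integral $R(\cdot,\cdot)$ of \eqref{ABr-N} combined with the temperedness of the Ornstein--Uhlenbeck process. First I would check that $\sup_{s\leq\tau}R(s,\omega)<\infty$ --- here the uniformness consequence \eqref{G3} of Hypothesis~\ref{Hypo_f-N}, together with the moment/ergodic properties \eqref{Z2}--\eqref{Z5} of $z$ and the choice of $\sigma$ as in \eqref{Z6}, make the weight $e^{\nu\lambda\uprho-4\aleph\int^{\uprho}_0|z(\theta_{\upeta}\omega)|\d\upeta}$ integrable against $\|\f(\uprho+s)\|^2_{\H}+|z(\theta_{\uprho}\omega)|^3+1$ uniformly for $s\leq\tau$ --- so that $\mathcal{R}(\tau,\omega)$ in \eqref{IRAS1-N} and $\widetilde{\mathcal{R}}(\tau,\omega)$ in \eqref{IRAS11-N} are genuine bounded closed balls of $\H$.

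For the absorbing property in (i), fix $(\tau,\omega,D)\in\R\times\Omega\times\mathfrak{D}$. For $r>0$, $s\leq\tau$ and $\u_0\in D(s-r,\theta_{-r}\omega)$, from \eqref{Phi-N} and the definition $\v=\u-\h z(\theta_{\cdot}\omega)$ we get $\Phi(r,s-r,\theta_{-r}\omega,\u_0)=\u(s,s-r,\theta_{-s}\omega,\u_0)=\v(s,s-r,\theta_{-s}\omega,\v_0)+\h z(\omega)$ with $\v_0=\u_0-\h z(\theta_{-r}\omega)$, whence $\|\v_0\|^2_{\H}\leq 2\|D(s-r,\theta_{-r}\omega)\|^2_{\H}+2\|\h\|^2_{\H}|z(\theta_{-r}\omega)|^2$; thus $\v_0$ lies in the closed ball $\widehat{D}(s-r,\theta_{-r}\omega)$ of that radius, and since $D\in\mathfrak{D}$ and $e^{-cr}|z(\theta_{-r}\omega)|^2\to0$ by \eqref{Z5}, we have $e^{-cr}\sup_{s\leq\tau}\|\widehat{D}(s-r,\theta_{-r}\omega)\|^2_{\H}\to0$ for every $c>0$, i.e. $\widehat{D}\in\mathfrak{D}$. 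Applying inequality \eqref{AB4-N} to $\widehat{D}$, extending the forcing integral there to $(-\infty,0]$ (its integrand is nonnegative) so as to bound it by $R(s,\omega)$, and invoking \eqref{v_0-1}, there is a time $\mathfrak{T}=\mathfrak{T}(\tau,\omega,D)$ such that $\|\v(s,s-r,\theta_{-s}\omega,\v_0)\|^2_{\H}\leq 1+\widehat{R}_4 R(s,\omega)$ for all $r\geq\mathfrak{T}$ and all $s\leq\tau$. Hence $\|\Phi(r,s-r,\theta_{-r}\omega,\u_0)\|^2_{\H}\leq 2+2\widehat{R}_4 R(s,\omega)+2\|\h\|^2_{\H}|z(\omega)|^2$, i.e. $\Phi(r,s-r,\theta_{-r}\omega)D(s-r,\theta_{-r}\omega)\subset\mathcal{R}(s,\omega)$ for all $s\leq\tau$ and $r\geq\mathfrak{T}$. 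Since $\tau\mapsto\sup_{s\leq\tau}R(s,\omega)$ is nondecreasing, $\mathcal{R}(s,\omega)\subseteq\mathcal{R}(\tau,\omega)$ for $s\leq\tau$; this simultaneously yields the backward-uniform $\mathfrak{D}$-absorption and the fact that $\mathcal{R}$ is increasing. For (ii), specialising $s=\tau$ in the same computation (no supremum over $s$) gives $\Phi(r,\tau-r,\theta_{-r}\omega)D(\tau-r,\theta_{-r}\omega)\subset\widetilde{\mathcal{R}}(\tau,\omega)$ for $r$ large, and since $\omega\mapsto R(\tau,\omega)$ (by joint measurability of $(\xi,\omega)\mapsto z(\theta_{\xi}\omega)$ and Fubini) and $\omega\mapsto|z(\omega)|$ are $\mathscr{F}$-measurable, $\widetilde{\mathcal{R}}$ is a random set.

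It remains to show $\mathcal{R}\in\mathfrak{D}$, i.e. $e^{-ct}\sup_{s\leq\tau}\|\mathcal{R}(s-t,\theta_{-t}\omega)\|^2_{\H}\to0$ for every $c>0$. The constant term and, by \eqref{Z5}, the term $e^{-ct}|z(\theta_{-t}\omega)|^2$ vanish as $t\to+\infty$, and since $\sup_{s\leq\tau}\sup_{s'\leq s-t}R(s',\theta_{-t}\omega)=\sup_{s'\leq\tau-t}R(s',\theta_{-t}\omega)$, it suffices to prove $e^{-ct}\sup_{s'\leq\tau-t}R(s',\theta_{-t}\omega)\to0$. In $R(s',\theta_{-t}\omega)$ I would split the bracket $\|\f(\uprho+s')\|^2_{\H}+|z(\theta_{\uprho-t}\omega)|^3+1$: the last two pieces are controlled, once $\sigma$ is fixed as in \eqref{Z6}, by the decay of the weight $e^{\nu\lambda\uprho-4\aleph\int^{\uprho}_0|z(\theta_{\upeta-t}\omega)|\d\upeta}$ as $\uprho\to-\infty$, using the ergodic/sublinear behaviour of $\int|z(\theta_{\cdot}\omega)|$ coming from \eqref{Z2}--\eqref{Z3}, which also bounds the $t$-dependence sub-exponentially; the forcing piece is handled, after the shift $r=\uprho+s'$, by $\|\f(\uprho+s')\|^2_{\H}\leq2\|\f(r)-\f_{\infty}\|^2_{\H}+2\|\f_{\infty}\|^2_{\H}$, Hypothesis~\ref{Hypo_f-N} (equivalently \eqref{G3}), and the same weight. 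Altogether $\sup_{s'\leq\tau-t}R(s',\theta_{-t}\omega)$ grows no faster than sub-exponentially in $t$, so multiplying by $e^{-ct}$ sends it to $0$ for every $c>0$; the analogous but simpler estimate without the supremum over $s'$ gives $\widetilde{\mathcal{R}}\in\mathfrak{B}$.

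The main obstacle is precisely this last step: one must track simultaneously the $\uprho$-decay and the $t$-growth of the random weight $e^{\nu\lambda\uprho-4\aleph\int^{\uprho}_0|z(\theta_{\upeta-t}\omega)|\d\upeta}$ and keep the estimate of the shifted forcing $\f(\uprho+s')$ uniform in $s'\leq\tau-t$, in order to conclude that $R(\cdot,\theta_{-t}\omega)$ is sub-exponential in $t$. This is where the largeness of $\sigma$ in \eqref{Z6}, the ergodic/sublinear properties \eqref{Z2}--\eqref{Z5} of $z$, and the backward uniform integrability of $\f$ from Hypothesis~\ref{Hypo_f-N} are all used together. By contrast, the absorption step itself is routine once the shift $\v_0=\u_0-\h z(\theta_{-r}\omega)$ has been absorbed into the slightly enlarged --- but still tempered --- universe member $\widehat{D}$.
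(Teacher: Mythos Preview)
Your proposal is correct and follows essentially the same route as the paper: finiteness of $\sup_{s\leq\tau}R(s,\omega)$ via \eqref{G3}, \eqref{Z3} and the choice of $\sigma$ in \eqref{Z6}; absorption from Lemma~\ref{Absorbing-N} together with the transformation $\u=\v+\h z$; monotonicity from the obvious increase of $\tau\mapsto\sup_{s\leq\tau}R(s,\omega)$; and $\widetilde{\mathcal{R}}\in\mathfrak{B}$ from $\widetilde{\mathcal{R}}\subseteq\mathcal{R}$ plus measurability of $R(\tau,\cdot)$. The only notable difference is in the verification of $\mathcal{R}\in\mathfrak{D}$: where you argue qualitatively that $\sup_{s'\leq\tau-t}R(s',\theta_{-t}\omega)$ is sub-exponential in $t$, the paper performs the explicit change of variables $\uprho\mapsto\uprho-t$ in $R(s-t,\theta_{-t}\omega)$, bounds the resulting weight $e^{\frac{\nu\lambda}{3}(\uprho+t)}$ on $(-\infty,-t]$ by $e^{c_1 t}e^{c_1\uprho}$ with $c_1=\min\{c/2,\nu\lambda/3\}$, and reduces directly to the already-established finiteness of the integral over $(-\infty,0]$ --- a cleaner bookkeeping of the same ingredients.
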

\begin{proof}
		(i) Using \eqref{G3}, \eqref{Z3} and \eqref{Z6}, we obtain
	\begin{align}\label{IRAS2-N}
		\sup_{s\leq \tau}R(s,\omega)&= \sup_{s\leq \tau}\int_{-\infty}^{0}
e^{\nu\lambda\uprho-4\aleph\int^{\uprho}_{0}
\left|z(\theta_{\upeta}\omega)\right|\d\upeta}
\bigg\{\|\f(\uprho+s)\|^2_{\H}
+\left|z(\theta_{\uprho}\omega)
\right|^3+1\bigg\}\d\uprho\nonumber\\&
=\sup_{s\leq \tau}\int_{-\infty}^{0}e^{\frac{\nu\lambda}{3}\uprho}\bigg\{\|\f(\uprho+s)\|^2_{\H}+\left|z(\theta_{\uprho}\omega)\right|^3+1\bigg\}\d\uprho<\infty.
	\end{align}
Hence, absorption follows from Lemma \ref{Absorbing-N}. Due to the fact that $\tau\mapsto \sup_{s\leq\tau}R(\tau,\omega)$ is an increasing function, $\mathcal{R}(\tau,\omega)$ is an increasing $\mathfrak{D}$-pullback absorbing set. For $c>0$, let $c_1=\min\{\frac{c}{2},\frac{\nu\lambda}{3}\}$ and consider
\begin{align}\label{IRAS3-N}
	&\lim_{t\to+\infty}e^{-ct}\sup_{s\leq \tau}\|\mathcal{R}(s-t,\theta_{-t}\omega)\|^2_{\H}\nonumber\\&\leq \lim_{t\to+\infty}e^{-ct}\left[2+2\widehat{R}_4\sup_{s\leq \tau} R(s-t,\theta_{-t}\omega)+2\|\h\|^2_{\H}\left|z(\theta_{-t}\omega)\right|^2\right] \nonumber\\&= 2\widehat{R}_4\lim_{t\to+\infty}e^{-ct}\sup_{s\leq \tau}\int_{-\infty}^{0}e^{\frac{\nu\lambda}{3}\uprho}\bigg\{\|\f(\uprho+s-t)\|^2_{\H}+\left|z(\theta_{\uprho-t}\omega)\right|^3+1\bigg\}\d\uprho \nonumber\\&= 2\widehat{R}_4\lim_{t\to+\infty}e^{-ct}\sup_{s\leq \tau}\int_{-\infty}^{-t}e^{\frac{\nu\lambda}{3}(\uprho+t)}\bigg\{\|\f(\uprho+s)\|^2_{\H}+\left|z(\theta_{\uprho}\omega)\right|^3+1\bigg\}\d\uprho \nonumber\\&\leq 2\widehat{R}_4\lim_{t\to+\infty}e^{-(c-c_1)t}\sup_{s\leq \tau}\int_{-\infty}^{0}e^{c_1\uprho}\bigg\{\|\f(\uprho+s)\|^2_{\H}+\left|z(\theta_{\uprho}\omega)\right|^3+1\bigg\}\d\uprho=0,
\end{align}
where we have used  \eqref{G3}, \eqref{Z3} and \eqref{IRAS2-N}. It infers from \eqref{IRAS3-N} that $\mathcal{R}\in{\mathfrak{D}}$.
\vskip 2mm
\noindent
(ii) Since $\widetilde{\mathcal{R}}\subseteq\mathcal{R}\in\mathfrak{D}\subseteq\mathfrak{B}$ and the mapping $\omega\mapsto R(\tau,\omega)$ is $\mathscr{F}$-measurable, using \eqref{AB4-N} (for $s=\tau$), we obtain that $\widetilde{\mathcal{R}}$ is a $\mathfrak{B}$-pullback \textbf{random} absorbing set.
\end{proof}
\subsection{Backward uniform-tail estimates and backward flattening estimates}
Backward uniform tail estimates and backward flattening estimates for the solution of the system \eqref{CNSE-A} play a key role in establishing the time-semi-uniform asymptotic compactness (BAC) of NRDS \eqref{Phi-N}. We obtain these estimates by using a proper cut-off function.
\begin{lemma}\label{largeradius-N}
	Suppose that Hypotheses \ref{poin} and \ref{AonH-N} are satisfied. Then, for any $(\tau,\omega,D)\in\R\times\Omega\times{\mathfrak{D}},$ the solution of \eqref{CNSE-A}  satisfies
	\begin{align}\label{ep-N}
		&\lim_{k,t\to+\infty}\sup_{s\leq \tau}\sup_{\v_{0}\in D(s-t,\theta_{-t}\omega)}\|\v(s,s-t,\theta_{-s}\omega,\v_{0})\|^2_{\L^2(\mathcal{O}^{c}_{k})}=0,
	\end{align}
	where $\mathcal{O}^c_{k}=\mathcal{O}\backslash\mathcal{O}_k$ and $\mathcal{O}_{k}=\{x\in\mathcal{O}:|x|\leq k\}$.
\end{lemma}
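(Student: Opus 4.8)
The plan is to adapt the uniform tail-estimate technique of Wang \cite{UTE-Wang}: cut off the \emph{unprojected} equation \eqref{2-A} (so that the pressure remains visible), test it against a cut-off function times $\v$, and close the resulting differential inequality by means of the uniform estimates of Lemma \ref{Absorbing-N}. Fix a smooth $\rho:[0,\infty)\to[0,1]$ with $\rho\equiv0$ on $[0,1]$, $\rho\equiv1$ on $[2,\infty)$, chosen so that $\sqrt{\rho}$ is also smooth, and set $\rho_k(x):=\rho(|x|^2/k^2)$; then $\rho_k$ is supported in $\mathcal{O}^c_k$, $\nabla\rho_k$ in $\{x\in\mathcal{O}:k\le|x|\le 2k\}$, and $k|\nabla\rho_k|+k^2|\Delta\rho_k|+k^2|\nabla\sqrt{\rho_k}|^2\le C$. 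Recalling that $\u=\v+\h z(\theta_t\omega)$ (the solution of \eqref{1} with $S(\u)=\h$), we pair \eqref{2-A} with $\rho_k\v$ in $\L^2(\mathcal{O})$ and use $\nabla\cdot\v=\nabla\cdot\h=0$ to obtain a balance of the schematic form
\begin{align*}
\frac12\frac{\d}{\d t}\int_{\mathcal{O}}\rho_k|\v|^2\d x+\nu\int_{\mathcal{O}}\rho_k|\nabla\v|^2\d x=\frac{\nu}{2}\int_{\mathcal{O}}|\v|^2\,\Delta\rho_k\,\d x-\int_{\mathcal{O}}(\u\cdot\nabla)\u\cdot\rho_k\v\,\d x+\int_{\mathcal{O}}p\,(\nabla\rho_k\cdot\v)\,\d x+\mathcal{F}_k(t),
\end{align*}
where $\mathcal{F}_k(t):=\int_{\mathcal{O}}\big(\f+\sigma\h z(\theta_t\omega)+\nu z(\theta_t\omega)\Delta\h\big)\cdot\rho_k\v\,\d x$.

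Next, one estimates the right-hand side. The viscous commutator is $O(k^{-2})\|\v\|^2_\H$. For the inertial term we split $\rho_k\v=\rho_k\u-z(\theta_t\omega)\rho_k\h$; an integration by parts using $\nabla\cdot\u=0$ turns the $\rho_k\u$-part into $\frac12\int_{\mathcal{O}}|\u|^2(\u\cdot\nabla\rho_k)\,\d x=O(k^{-1})\|\u\|^2_{\L^4(\mathcal{O})}\|\u\|_\H$, and the $-z\rho_k\h$-part into an $O(k^{-1})|z(\theta_t\omega)|\,\|\h\|_{\L^\infty}\|\u\|^2_\H$ boundary-layer term plus $-z(\theta_t\omega)\sum_{i,j}\int_{\mathcal{O}}(\sqrt{\rho_k}\u)_i\,\partial_i\h_j\,(\sqrt{\rho_k}\u)_j\,\d x$; since $\sqrt{\rho_k}\u\in\L^2(\mathcal{O})$, Hypothesis \ref{AonH-N} controls the latter by $\aleph|z(\theta_t\omega)|\int_{\mathcal{O}}\rho_k|\u|^2\d x\le 2\aleph|z(\theta_t\omega)|\int_{\mathcal{O}}\rho_k|\v|^2\d x+2\aleph|z(\theta_t\omega)|^3\|\h\|^2_{\L^2(\mathcal{O}^c_k)}$, the first piece being absorbed on the left exactly as in \eqref{EI1-N}. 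For $\mathcal{F}_k$, the Cauchy--Schwarz inequality (with the integration restricted to $\mathrm{supp}\,\rho_k\subset\mathcal{O}^c_k$) and Young's inequality give, for any $\mu>0$, a bound $\mu\int_{\mathcal{O}}\rho_k|\v|^2\d x+C\big(\|\f(t)\|^2_{\L^2(\mathcal{O}^c_k)}+(1+|z(\theta_t\omega)|^2)(\|\h\|^2_{\L^2(\mathcal{O}^c_k)}+\|\Delta\h\|^2_{\L^2(\mathcal{O}^c_k)})\big)$. Finally, applying the Poincar\'e inequality \eqref{poin} to $\sqrt{\rho_k}\v\in\H^1_0(\mathcal{O})$ converts $\nu\int_{\mathcal{O}}\rho_k|\nabla\v|^2\d x$ into a genuine damping $(\nu\lambda-\delta_0)\int_{\mathcal{O}}\rho_k|\v|^2\d x$ on the left, for any small $\delta_0>0$, up to an $O(k^{-2})\|\v\|^2_\H$ commutator.

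The pressure contribution is the crux and the main obstacle. As $\v$ is divergence-free, $\int_{\mathcal{O}}\nabla p\cdot\rho_k\v\,\d x=-\int_{\mathcal{O}}p\,(\nabla\rho_k\cdot\v)\,\d x$ already carries a factor $k^{-1}$, so this term is $\le\frac{C}{k}\|p\|_{\L^2(\mathcal{O})}\|\v\|_\H$. Taking the divergence of \eqref{2-A} and using incompressibility, $p$ solves $-\Delta p=\Tr(\nabla\u)^2=\sum_{i,j}\partial_i\partial_j(\u_i\u_j)$ in $\mathcal{O}$ (with a Neumann-type condition on $\partial\mathcal{O}$), whence the $\L^2$ Calder\'on--Zygmund / elliptic $W^{2,2}$ estimate gives $\|p\|_{\L^2(\mathcal{O})}\le C\|\u\otimes\u\|_{\L^2(\mathcal{O})}\le C\|\u\|^2_{\L^4(\mathcal{O})}$. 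By the 2D Ladyzhenskaya inequality $\|\u\|^2_{\L^4(\mathcal{O})}\le C\|\u\|_\H\|\u\|_\V$ with $\u=\v+\h z(\theta_t\omega)$, and using the \emph{uniform} bound $\sup_{s\le\tau}\|\v\|_\H\le C$ of Lemma \ref{Absorbing-N} together with Young's inequality, both this pressure term and the $\L^4$-term arising from the inertial commutator are dominated by $\frac{C}{k}\big(1+\|\v\|^2_\V+|z(\theta_t\omega)|^4\big)$. It is exactly this delicate handling of the $\L^4(\mathcal{O})$-norm of $\u$ that forces us to retain only the linear power $\|\v\|^2_\V$. A further, subsidiary, subtlety is to make the identity $-\Delta p=\Tr(\nabla\u)^2$ and the above elliptic estimate fully rigorous on $\mathcal{O}$, since the Poisson problem for $p$ carries a contribution from $\partial\mathcal{O}$.

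Collecting the above and choosing $\mu,\delta_0$ so small that $\lambda_1:=\nu\lambda-\delta_0-\mu-4\aleph\mathbb{E}(|z(\cdot)|)>0$ (possible by \eqref{Z6}), we arrive, on any fixed-length interval $[s-T_0,s]$ and for $t$ large enough depending on $T_0$, at
\begin{align*}
\frac{\d}{\d t}\int_{\mathcal{O}}\rho_k|\v|^2\d x+\big(\nu\lambda-\delta_0-\mu-4\aleph|z(\theta_t\omega)|\big)\int_{\mathcal{O}}\rho_k|\v|^2\d x\le\frac{C}{k}\,\Xi(t)+\Theta_k(t),
\end{align*}
where $\Xi(t):=C\big(1+|z(\theta_t\omega)|^4+\|\v(t)\|^2_\V\big)$ — the uniform $\H$-bound of Lemma \ref{Absorbing-N} being applicable on $[s-T_0,s]$ since by time $s-T_0$ the process has been running for at least $\mathfrak{T}$ — and $\Theta_k(t):=C\big(\|\f(t)\|^2_{\L^2(\mathcal{O}^c_k)}+(1+|z(\theta_t\omega)|^3)(\|\h\|^2_{\L^2(\mathcal{O}^c_k)}+\|\Delta\h\|^2_{\L^2(\mathcal{O}^c_k)})\big)$. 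A Gronwall argument on $[s-T_0,s]$ (as in the passage from \eqref{AB-EI} to \eqref{AB2-N}), using that $\v(s-T_0,\cdot)$ lies in the absorbing set $\mathcal{R}$ of Proposition \ref{IRAS-N} and that $\sup_{s\le\tau}\int_{s-T_0}^{s}\|\v(r)\|^2_\V\,\d r<\infty$ by \eqref{AB1-N}, then yields — uniformly in $s\le\tau$ and $\v_0\in D(s-t,\theta_{-t}\omega)$ — a bound of the form
\begin{align*}
\|\v(s,s-t,\theta_{-s}\omega,\v_0)\|^2_{\L^2(\mathcal{O}^c_k)}\le C_1(T_0,\omega)\,e^{-(\nu\lambda-\delta_0-\mu)T_0+4\aleph\int_{-T_0}^{0}|z(\theta_r\omega)|\,\d r}+\frac{C_2(T_0,\omega)}{k}+\Lambda_k(T_0,\omega),
\end{align*}
where $C_1(T_0,\omega)$ grows at most sub-exponentially in $T_0$ (by \eqref{Z3}, \eqref{Z6}, as in \eqref{IRAS2-N}), and $\Lambda_k(T_0,\omega)\to0$ as $k\to+\infty$ — its $\f$-part by the backward tail-smallness \eqref{f3-N}, its $\h$-part since $\h\in\D(\A)\subset\H^2(\mathcal{O})$ forces $\|\h\|_{\L^2(\mathcal{O}^c_k)}+\|\Delta\h\|_{\L^2(\mathcal{O}^c_k)}\to0$. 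Letting first $k\to+\infty$ and then $T_0\to+\infty$ — the surviving term tending to $0$ by the definition of $\lambda_1$ and \eqref{Z3} — we obtain \eqref{ep-N}.
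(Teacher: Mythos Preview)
Your argument is correct and reaches the same conclusion, but the route genuinely differs from the paper's. Both proofs test the unprojected equation \eqref{2-A} against a cut-off times $\v$, handle the pressure through $-\Delta p=\mathrm{Tr}(\nabla\u)^2$ and the elliptic estimate $\|p\|_{\mathrm{L}^2}\le C\|\u\|^2_{\L^4}$ (with the same Ladyzhenskaya reduction), and absorb the $b(\cdot,\h,\cdot)$ term via Hypothesis \ref{AonH-N}. The divergence lies in how the resulting differential inequality is closed. The paper applies the variation-of-constants formula on the \emph{full} interval $(s-t,s)$ and then controls the awkward term $\int_{s-t}^{s}e^{\cdots}\|\v(\uprho)\|^4_\H\,\d\uprho$ by substituting the pointwise bound \eqref{AB2-N} for $\|\v(\uprho)\|^2_\H$ inside the integral, producing a double integral (the $\widehat I_1$--computation \eqref{ep7-N}) that is shown to be finite; the limit is then the single joint limit $k,t\to+\infty$. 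You instead integrate only over a fixed window $[s-T_0,s]$, invoke the uniform $\H$-bound of Lemma \ref{Absorbing-N} on that window so that the nonlinear terms collapse to $\frac{C(T_0,\omega)}{k}(1+\|\v\|^2_\V)$, and then run a two-parameter limit (first $k\to+\infty$ for fixed $T_0$, then $T_0\to+\infty$). Your approach is conceptually cleaner in that it avoids the double-integral estimate, at the cost of tracking the $T_0$-dependence of the constants: one must check (as you indicate) that $C_1(T_0,\omega)=\sup_{s\le\tau}\|\v(s-T_0)\|^2_\H$ grows at most sub-exponentially in $T_0$, which indeed follows from the fact that $\mathcal R\in\mathfrak D$ (Proposition \ref{IRAS-N}) and hence $e^{-cT_0}\|\mathcal R(\cdot-T_0,\theta_{-T_0}\omega)\|^2_\H\to0$ for every $c>0$. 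The paper's single-shot integration is slightly more economical for the subsequent flattening lemma, where exactly the same $\int e^{\cdots}\|\v\|^4_\H$ and even $\int e^{\cdots}\|\v\|^8_\H$ appear and are dispatched by the same device.
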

\begin{proof}
		Let $\uprho$ be a smooth function such that $0\leq\uprho(\xi)\leq 1$ for $\xi\in\R^+$ and
	\begin{align}\label{337}
		\uprho(\xi)=\begin{cases*}
			0, \text{ for }0\leq \xi\leq 1,\\
			1, \text{ for } \xi\geq2.
		\end{cases*}
	\end{align}
	Then, there exists a positive constant $C$ such that $|\uprho'(\xi)|\leq C$ and $|\uprho''(\xi)|\leq C$ for all $\xi\in\R^+$. Taking the divergence to the first equation of \eqref{2-A}, formally,  we obtain
	\begin{align*}
		-\Delta p&=\nabla\cdot\left[\big((\v+{\boldsymbol{h}}z(\theta_{t}\omega))\cdot\nabla\big)(\v+{\boldsymbol{h}}z(\theta_{t}\omega))\right]\\&=\nabla\cdot\left[\nabla\cdot\big((\v+\boldsymbol{h}z(\theta_{t}\omega))\otimes(\v+\boldsymbol{h}z(\theta_{t}\omega))\big)\right]\\
		&=\sum_{i,j=1}^{2}\frac{\partial^2}{\partial x_i\partial x_j}\big((v_i+{h}_iz(\theta_{t}\omega))(v_j+{h}_jz(\theta_{t}\omega))\big),
	\end{align*}
which implies that
	\begin{align}\label{p-value}
		p=(-\Delta)^{-1}\left[\sum_{i,j=1}^{2}\frac{\partial^2}{\partial x_i\partial x_j}\big((v_i+h_i z(\theta_{t}\omega))(v_j+h_j z(\theta_{t}\omega))\big)\right],
	\end{align}
in the weak sense. It follows from \eqref{p-value} that
\begin{align}
	\|p\|^2_{\mathrm{L}^2(\mathcal{O})}&=\left\|\left[\sum_{i,j=1}^{2}\frac{\partial^2}{\partial x_i\partial x_j}(-\Delta)^{-1}\big((v_i+h_i z(\theta_{t}\omega))(v_j+h_j z(\theta_{t}\omega))\big)\right]\right\|^2_{\mathrm{L}^2(\mathcal{O})}\nonumber\\  &\leq C\left\|\sum_{i,j=1}^{2}(-\Delta)^{-1}\big((v_i+h_i z(\theta_{t}\omega))(v_j+h_j z(\theta_{t}\omega))\big)
\right\|^2_{\H^{2}(\mathcal{O})}\nonumber\\  &\leq C\left\|\Delta\sum_{i,j=1}^{2}(-\Delta)^{-1}\big((v_i+h_i z(\theta_{t}\omega))(v_j+h_j z(\theta_{t}\omega))\big)
\right\|^2_{\L^{2}(\mathcal{O})}
\nonumber\\  &\leq C \|\v+\h(\theta_{t}\omega)\|^4_{\L^4(\mathcal{O})}\label{p-value-N},
\end{align}
where, in the penultimate step, we have used the elliptic regularity for Poincar\'e domains with uniformly smooth boundary of class $\mathrm{C}^3$ (cf. Lemmas 1, \cite{Heywood}). Taking the inner product to the first equation of \eqref{2-A} with $\uprho^2\left(\frac{|x|^2}{k^2}\right)\v$ in $\mathbb{L}^2(\mathcal{O})$, we have
	\begin{align}\label{ep1-N}
		&\frac{1}{2} \frac{\d}{\d t}\int_{\mathcal{O}}\uprho^2\left(\frac{|x|^2}{k^2}\right)|\v|^2\d x\nonumber \\&= \underbrace{\nu\int_{\mathcal{O}}(\Delta\v) \uprho^2\left(\frac{|x|^2}{k^2}\right) \v \d x}_{:=I_1(k,t)}-\underbrace{b\left(\v+\h z(\theta_{t}\omega),\v+\h z(\theta_{t}\omega),\uprho^2\left(\frac{|x|^2}{k^2}\right)(\v+\h z(\theta_{t}\omega))\right)}_{:=I_2(k,t)}\nonumber\\&\quad+\underbrace{b\left(\v+\h z(\theta_{t}\omega),\v+\h z(\theta_{t}\omega),\uprho^2\left(\frac{|x|^2}{k^2}\right)\h z(\theta_{t}\omega)\right)}_{:=I_3(k,t)}-\underbrace{\int_{\mathcal{O}}(\nabla p)\uprho^2\left(\frac{|x|^2}{k^2}\right)\v\d x}_{:=I_4(k,t)}\nonumber\\&\quad+ \underbrace{\int_{\mathcal{O}}\f\uprho^2\left(\frac{|x|^2}{k^2}\right)\v\d x +\sigma z(\theta_{t}\omega)\int_{\mathcal{O}}
\h\uprho^2\left(\frac{|x|^2}{k^2}\right)\v\d x+\nu z(\theta_{t}\omega)\int_{\mathcal{O}}(\Delta\h)\uprho^2\left(\frac{|x|^2}{k^2}\right)\v\d x}_{:=I_5(k,t)}.
	\end{align}
	Let us now estimate each term on the right hand side of \eqref{ep1-N}. Integration by parts, divergence free condition of $\v(\cdot)$ and \eqref{poin} give
	\begin{align}\label{ep2-N}
		I_{1}(k,t)&= -\nu \int_{\mathcal{O}}\left|\nabla\left(\uprho\left(\frac{|x|^2}{k^2}\right) \v\right)\right|^2  \d x+\nu \int_{\mathcal{O}}\v\nabla\left(\uprho\left(\frac{|x|^2}{k^2}\right)\right)\nabla\left(\uprho\left(\frac{|x|^2}{k^2}\right) \v \right) \d x \nonumber\\&\quad-\nu \int_{\mathcal{O}}\nabla\v \nabla\left(\uprho\left(\frac{|x|^2}{k^2}\right)\right)\uprho\left(\frac{|x|^2}{k^2}\right) \v  \d x\nonumber\\&\leq-\nu \int_{\mathcal{O}}\left|\nabla\left(\uprho\left(\frac{|x|^2}{k^2}\right) \v\right)\right|^2\d x+\frac{\nu}{8} \int_{\mathcal{O}}\left|\nabla\left(\uprho\left(\frac{|x|^2}{k^2}\right) \v\right)\right|^2\d x\nonumber\\&\quad+\frac{\nu\lambda}{8} \int_{\mathcal{O}}\left|\left(\uprho\left(\frac{|x|^2}{k^2}\right) \v\right)\right|^2\d x+\frac{C}{k}\left[\|\v\|^2_{\H}+\|\v\|^2_{\V}\right]\nonumber\\&\leq-\frac{3\nu\lambda}{4} \int_{\mathcal{O}}\left|\left(\uprho\left(\frac{|x|^2}{k^2}\right) \v\right)\right|^2\d x+\frac{C}{k}\|\v\|^2_{\V},
	\end{align}
	and
	\begin{align}\label{ep3-N}
		-I_{2}(k,t)&=4\int_{\mathcal{O}} \uprho\left(\frac{|x|^2}{k^2}\right)\uprho'\left(\frac{|x|^2}{k^2}\right)\frac{x}{k^2}\cdot(\v+\h z(\theta_{t}\omega)) |\v+\h z(\theta_{t}\omega)|^2 \d x\nonumber\\&= 4 \int\limits_{\mathcal{O}\cap\{k\leq|x|\leq \sqrt{2}k\}}\uprho\left(\frac{|x|^2}{k^2}\right) \uprho'\left(\frac{|x|^2}{k^2}\right)\frac{x}{k^2}\cdot(\v+\h z(\theta_{t}\omega)) |\v+\h z(\theta_{t}\omega)|^2 \d x\nonumber\\&\leq \frac{4\sqrt{2}}{k} \int\limits_{\mathcal{O}\cap\{k\leq|x|\leq \sqrt{2}k\}} \left|\uprho'\left(\frac{|x|^2}{k^2}\right)\right| |\v+\h z(\theta_{t}\omega)|^3 \d x\nonumber\\&\leq\frac{C}{k}\left[\|
\v\|^3_{\L^3(\mathcal{O})}+
\big|z(\theta_{t}\omega)
\big|^3\|\h\|^3_{\L^3(\mathcal{O})}\right] \nonumber\\&\leq\frac{C}{k}
\left[\|\v\|^2_{\H}\|\v\|_{\V}
+\left|z(\theta_{t}\omega)
\right|^3\right]\nonumber\\&\leq\frac{C}{k}
\left[\|\v\|^4_{\H}+\|\v\|^2_{\V}
+\left|z(\theta_{t}\omega)\right|^4+1\right],
	\end{align}
	where we have used Gagliardo-Nirenberg's and Young's inequalities. Using integration by parts, divergence free condition, \eqref{poin} and \eqref{p-value-N}, we get
	\begin{align}\label{ep9-N} -I_4(k,t)&=2\int_{\mathcal{O}}p\uprho\left(\frac{|x|^2}{k^2}\right)\uprho'\left(\frac{|x|^2}{k^2}\right)\frac{2}{k^2}(x\cdot\v)\d x\nonumber\\&\leq\frac{C}{k} \int\limits_{\mathcal{O}\cap\{k\leq|x|\leq \sqrt{2}k\}}\left|p\right|\left|\v\right|\d x\nonumber\\&\leq \frac{C}{k}\bigg[\|\v+\h z(\theta_{t}\omega)\|^2_{\L^4(\mathcal{O})}
\|\v\|_{\H}\bigg]
\nonumber\\&\leq \frac{C}{k}\bigg[\|\v\|^2_{\L^4(\mathcal{O})}
\|\v\|_{\H}+|z(\theta_{t}\omega)|^2\|
\v\|_{\H}\bigg]\nonumber\\&\leq \frac{C}{k}\bigg[\|\v\|_{\V}\|\v\|^2_{\H}
+|z(\theta_{t}\omega)|^4+\|\v\|^2_{\H}\bigg]\nonumber\\&\leq \frac{C}{k}\bigg[\|\v\|^4_{\H}+\|\v\|^2_{\V}
+\left|z(\theta_{t}\omega)\right|^4\bigg],
	\end{align}
	where we have used \eqref{poin}, Gagliardo-Nirenberg's and Young's inequalities. Finally, we estimate the remaining terms of \eqref{ep1-N} by using Hypothesis \ref{AonH-N}, H\"older's and Young's inequalities as follows:
	\begin{align} &\left|I_3(k,t)\right|\nonumber\\&\leq\left|z(\theta_{t}\omega)\right|\left|b\left(\uprho\left(\frac{|x|^2}{k^2}\right)(\v+\h z(\theta_{t}\omega)),\h,\uprho\left(\frac{|x|^2}{k^2}\right)(\v+\h z(\theta_{t}\omega))\right)\right|\nonumber\\&\quad+2\left|z(\theta_{t}\omega)\right|\left|\int_{\mathcal{O}}\uprho\left(\frac{|x|^2}{k^2}\right)\uprho'\left(\frac{|x|^2}{k^2}\right)\Big[\frac{2x}{k^2}\cdot(\v+\h z(\theta_{t}\omega))\Big]\big[(\v+\h z(\theta_{t}\omega))\cdot \h\big]\d x\right|\nonumber\\&\leq{\aleph}\left|z(\theta_{t}\omega)\right|\int_{\mathcal{O}}\uprho^2\left(\frac{|x|^2}{k^2}\right)|\v+\h z(\theta_{t}\omega)|^2\d x +\frac{C}{k}\left|z(\theta_{t}\omega)\right|\int\limits_{\mathcal{O}\cap\{k\leq|x|\leq \sqrt{2}k\}}\left|\v+\h z(\theta_{t}\omega)\right|^2\left|\h\right|\d x\nonumber\\&\leq2{\aleph}\left|z(\theta_{t}\omega)\right|\int_{\mathcal{O}}\left|\uprho\left(\frac{|x|^2}{k^2}\right)\v\right|^2\d x+2{\aleph}\left|z(\theta_{t}\omega)\right|^3\int\limits_{\mathcal{O}_k^{c}}|\h|^2\d x +\frac{C}{k}\bigg[\left|z(\theta_{t}\omega)\right|\|\v\|^2_{\L^4(\mathcal{O})}+\left|z(\theta_{t}\omega)\right|^3\bigg]\nonumber\\&\leq2{\aleph}\left|z(\theta_{t}\omega)\right|\int_{\mathcal{O}}\left|\uprho\left(\frac{|x|^2}{k^2}\right)\v\right|^2\d x+2{\aleph}\left|z(\theta_{t}\omega)\right|^3\int\limits_{\mathcal{O}\cap\{|x|\geq k\}}|\h|^2\d x \nonumber\\&\quad+\frac{C}{k}\bigg[\|\v\|^4_{\H}+\|\v\|^2_{\V}+\left|z(\theta_{t}\omega)\right|^4+1\bigg],
	\end{align}
	and
	\begin{align}
		I_5(k,t)\leq\frac{\nu\lambda}{4}\int_{\mathcal{O}} \left|\uprho\left(\frac{|x|^2}{k^2}\right)\v\right|^2 \d x+C\int_{\mathcal{O}}\uprho^2\left(\frac{|x|^2}{k^2}\right)\bigg[|\f|^2+\left|z(\theta_{t}\omega)\right|^2 |\h|^2+\left|z(\theta_{t}\omega)\right|^2|\Delta\h|^2\bigg]\d x.	\label{ep4-N}
	\end{align}
	Combining \eqref{ep1-N}-\eqref{ep4-N}, we get
	\begin{align}\label{ep5-N}
		&\frac{\d}{\d t}\|\v\|^2_{\mathbb{L}^2(\mathcal{O}_k^{c})}+\big[\nu\lambda-4{\aleph}\left|z(\theta_{t}\omega)\right|\big]\|\v\|^2_{\mathbb{L}^2(\mathcal{O}_k^c)} \nonumber\\ &\leq\frac{C}{k} \bigg[\|\v\|^4_{\H}+\|\v\|^2_{\V}+\left|z(\theta_{t}\omega)\right|^{4}+1\bigg]+2{\aleph}\left|z(\theta_{t}\omega)\right|^3\int_{\mathcal{O}\cap\{|x|\geq k\}}\left|\h(x)\right|^{2}\d x\nonumber\\&\quad+C \int_{\mathcal{O}\cap\{|x|\geq k\}}|\f(x)|^2\d x+C\left|z(\theta_{t}\omega)\right|^2 \int_{\mathcal{O}\cap\{|x|\geq k\}}\left[|\h (x)|^2+|\Delta\h (x)|^2\right]\d x.
	\end{align}
	Making use of variation of constant formula to the above inequality \eqref{ep5-N} on $(s-t,s)$ and replacing $\omega$ by $\theta_{-s}\omega$, we find for $s\leq\tau,\ t\geq 0$ and $\omega\in\Omega$,
	\begin{align}\label{ep6-N}
		&\|\v(s,s-t,\theta_{-s}\omega,\v_{0})\|^2_{\mathbb{L}^2(\mathcal{O}_k^{c})} \nonumber\\& \leq e^{-\nu\lambda t+4{\aleph}\int^{0}_{-t}\left|z(\theta_{\upeta}\omega)\right|\d\upeta}\|\v_{0}\|^2_{\H}+\frac{C}{k}\bigg[\underbrace{\int_{s-t}^{s}e^{\nu\lambda(\uprho-s)-4{\aleph}\int^{\uprho}_{s}\left|z(\theta_{\upeta-s}\omega)\right|\d\upeta}\|\v(\uprho,s-t,\theta_{-s}\omega,\v_{0})\|^4_{\H}\d\uprho}_{\widehat{I}_1(t)}\nonumber\\&\quad+\underbrace{\int_{s-t}^{s}e^{\nu\lambda(\uprho-s)-4{\aleph}\int^{\uprho}_{s}\left|z(\theta_{\upeta-s}\omega)\right|\d\upeta}\|\v(\uprho,s-t,\theta_{-s}\omega,\v_{0})\|^2_{\V}\d\uprho}_{\widehat{I}_2(t)}\nonumber\\&\quad+\underbrace{\int_{-t}^{0}e^{\nu\lambda\uprho-4{\aleph}\int^{\uprho}_{0}\left|z(\theta_{\upeta}\omega)\right|\d\upeta}\bigg\{\left|z(\theta_{\uprho}\omega)\right|^{4}+1\bigg\}\d\uprho}_{\widehat{I}_{3}(t)}\bigg]\nonumber\\&\quad+\underbrace{C\int_{-t}^{0}e^{\nu\lambda\uprho-4{\aleph}\int^{\uprho}_{0}\left|z(\theta_{\upeta}\omega)\right|\d\upeta}\left|z(\theta_{\uprho}\omega)\right|^2\d\uprho\left[\int\limits_{\mathcal{O}\cap\{|x|\geq k\}}|\h (x)|^2\d x+\int\limits_{\mathcal{O}\cap\{|x|\geq k\}}|\Delta\h (x)|^2\d x\right]}_{\widehat{I}_4(k,t)}\nonumber\\&\quad+\underbrace{C\int_{-t}^{0}e^{\nu\lambda\uprho-4{\aleph}\int^{\uprho}_{0}\left|z(\theta_{\upeta}\omega)\right|\d\upeta} \int\limits_{\mathcal{O}\cap\{|x|\geq k\}}|\f(x,\uprho+s)|^2\d x\d\uprho}_{\widehat{I}_5(k,t)}.
	\end{align}
From \eqref{AB2-N}, we obtain
\begin{align}\label{ep7-N} &\widehat{I}_1(t)\nonumber\\&\leq\int_{s-t}^{s}e^{\nu\lambda(\uprho-s)-4{\aleph}\int^{\uprho}_{s}\left|z(\theta_{\upeta-s}\omega)\right|\d\upeta}\bigg[e^{-\nu\lambda(\uprho-s+t)+4{\aleph}\int_{-t}^{\uprho-s}\left|z(\theta_{\upeta}\omega)\right|\d\upeta}\|\v_{0}\|^2_{\H}\nonumber\\&\quad+\widehat{R}_4\int_{-t}^{\uprho-s}e^{\nu\lambda(\uprho_1+s-\uprho)-4{\aleph}\int^{\uprho_1}_{\uprho-s}\left|z(\theta_{\upeta}\omega)\right|\d\upeta}\bigg\{\|\f(\uprho_1+s)\|^2_{\H}+\left|z(\theta_{\uprho_1}\omega)\right|^{3}+1\bigg\}\d\uprho_1\bigg]^2\d\uprho\nonumber\\&\leq C \int_{s-t}^{s}e^{\frac{\nu\lambda}{4}(\uprho-s)-4{\aleph}\int_{\uprho-s}^{0}\left|z(\theta_{\upeta}\omega)\right|\d\upeta}\d\uprho\cdot e^{-\frac{3\nu\lambda}{4}t+8{\aleph}\int_{-t}^{0}\left|z(\theta_{\upeta}\omega)\right|\d\upeta}\|\v_{0}\|^4_{\H}\nonumber\\&\quad+\int_{s-t}^{s}e^{\frac{\nu\lambda}{3}(\uprho-s)-4{\aleph}\int_{\uprho-s}^{0}\left|z(\theta_{\upeta}\omega)\right|\d\upeta}\d\uprho\nonumber\\&\quad\times\bigg(\int_{-\infty}^{0}e^{\frac{\nu\lambda}{3}\uprho_1+4{\aleph}\int_{\uprho_1}^{0}\left|z(\theta_{\upeta}\omega)\right|\d\upeta}\bigg\{\|\f(\uprho_1+s)\|^2_{\H}+\left|z(\theta_{\uprho_1}\omega)\right|^{3}+1\bigg\}\d\uprho_1\bigg)^2\nonumber\\&\leq C \int_{-\infty}^{0}e^{\frac{\nu\lambda}{4}\uprho-4{\aleph}\int_{\uprho}^{0}\left|z(\theta_{\upeta}\omega)\right|\d\upeta}\d\uprho\cdot \left[e^{-\frac{3\nu\lambda}{8}t+4{\aleph}\int_{-t}^{0}\left|z(\theta_{\upeta}\omega)\right|\d\upeta}\|\v_{0}\|^2_{\H}\right]^2+\int_{-\infty}^{0}e^{\frac{\nu\lambda}{3}\uprho-4{\aleph}\int_{\uprho}^{0}\left|z(\theta_{\upeta}\omega)\right|\d\upeta}\d\uprho\nonumber\\&\quad\times\bigg(\int_{-\infty}^{0}e^{\frac{\nu\lambda}{3}\uprho_1+4{\aleph}\int_{\uprho_1}^{0}\left|z(\theta_{\upeta}\omega)\right|\d\upeta}\bigg\{\|\f(\uprho_1+s)\|^2_{\H}+\left|z(\theta_{\uprho_1}\omega)\right|^{3}+1\bigg\}\d\uprho_1\bigg)^2\nonumber\\&:=\widehat{I}_{11}(t)+\widehat{I}_{12}(t).
\end{align}
It follows from \eqref{ep6-N} and \eqref{ep7-N} that
	\begin{align}\label{ep8-N}
	&\|\v(s,s-t,\theta_{-s}\omega,\v_{0})\|^2_{\mathbb{L}^2(\mathcal{O}_k^{c})} \nonumber\\& \leq e^{-\nu\lambda t+4{\aleph}\int^{0}_{-t}\left|z(\theta_{\upeta}\omega)\right|\d\upeta}\|\v_{0}\|^2_{\H}+\frac{C}{k}\bigg[\widehat{I}_{11}(t)+\widehat{I}_{12}(t)+\widehat{I}_{2}(t)+\widehat{I}_{3}(t)\bigg]+\widehat{I}_{4}(k,t)+\widehat{I}_{5}(k,t).
\end{align}
	Now, using the fact that $\h\in\D(\A)$, \eqref{f3-N}, the definition of backward temperedness \eqref{BackTem}, \eqref{Z3}, \eqref{Z6}, \eqref{IRAS2-N} and Lemma \ref{Absorbing-N}, one can  complete the proof.
\end{proof}
The following Lemma provides the backward flattening estimates for the solution of the system \eqref{2-A}. For each $k\geq1$, we let
\begin{align}\label{varrho_k}
	\varrho_k(x):= 1-\uprho\left(\frac{|x|^2}{k^2}\right),  \ \ x\in\mathcal{O}.
\end{align}
Let $\bar{\v}:=\varrho_k\v$ for $\v:=\v(s,s-t,\omega,\v_{\tau})\in\H$. Then $\bar{\v}\in\L^2(\mathcal{O}_{\sqrt{2}k})$, which has the orthogonal decomposition:
\begin{align}\label{DirectProd}
	\bar{\v}=\P_{i}\bar{\v}\oplus(\I-\P_{i})\bar{\v}=:\bar{\v}_{i,1}+\bar{\v}_{i,2},  \ \ \text{ for each } \ i\in\N,
\end{align}
where, $\P_i:\L^2(\mathcal{O}_{\sqrt{2}k})\to\H_{i}:=\mathrm{span}\{e_1,e_2,\cdots,e_i\}\subset\L^2(\mathcal{O}_{\sqrt{2}k})$ is a canonical projection and $\{e_m\}_{m=1}^{\infty}$, is a family of eigenfunctions for $-\Delta$ in $\L^2(\mathcal{O}_{\sqrt{2}k})$ with corresponding  eigenvalues $0<\lambda_1\leq\lambda_2\leq\cdots\leq\lambda_m\to\infty$ as $m\to\infty$. We also have that $$\varrho_k \Delta\v=\Delta\bar{\v}-\v\Delta\varrho_k-2\nabla\varrho_k\cdot\nabla\v.$$ Furthermore, for  $\boldsymbol{\psi}\in\H_0^1(\mathcal{O}_{\sqrt{2}k})$, we have
\begin{align}\label{poin-i}
	\mathrm{P}_i\boldsymbol{\psi}&=\sum_{m=1}^{i}(\boldsymbol{\psi},e_m)e_m,\  \A^{1/2}\mathrm{P}_i\boldsymbol{\psi}=\sum_{m=1}^{i}\lambda^{1/2}_j(\boldsymbol{\psi},e_m)e_m,\nonumber\\ (\I-\mathrm{P}_i)\boldsymbol{\psi}&=\sum_{m=i+1}^{\infty}(\boldsymbol{\psi},e_m)e_m,\ \A^{1/2}(\I-\P_i)\boldsymbol{\psi}=\sum_{m=i+1}^{\infty}\lambda_j(\boldsymbol{\psi},e_m)e_m, \nonumber\\
	\|\nabla(\I-\P_i)\boldsymbol{\psi}\|_{\L^2(\mathcal{O}_{\sqrt{2}k})}^2&=\|\A^{1/2}(\I-\P_i)\boldsymbol{\psi}\|_{\L^2(\mathcal{O}_{\sqrt{2}k})}^2=\sum_{m=i+1}^{\infty}\lambda_m^2|(\boldsymbol{\psi},e_m)|^2\nonumber\\&\geq \lambda_{i+1}\sum_{m=i+1}^{\infty}\lambda_m|(\boldsymbol{\psi},e_m)|^2=\lambda_{i+1}\|(\I-\P_i)\boldsymbol{\psi}\|_{\L^2(\mathcal{O}_{\sqrt{2}k})}^2.
\end{align}
\begin{lemma}\label{Flattening-N}
	Suppose that $\f\in\mathrm{L}^2_{\mathrm{loc}}(\R;\H)$, Hypotheses \ref{assumpO} and \ref{AonH-N} are satisfied. Let $(\tau,\omega,D)\in\R\times\Omega\times\mathfrak{D}$ and $k\geq1$ be fixed. Then
	\begin{align}\label{FL-P}
		\lim_{i,t\to+\infty}\sup_{s\leq \tau}\sup_{\v_{0}\in D(s-t,\theta_{-t}\omega)}\|(\I-\P_{i})\bar{\v}(s,s-t,\theta_{-s},\bar{\v}_{0,2})\|^2_{\L^2(\mathcal{O}_{\sqrt{2}k})}=0,
	\end{align}
	where $\bar{\v}_{0,2}=(\I-\P_{i})(\varrho_k\v_{0})$.
\end{lemma}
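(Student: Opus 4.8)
The plan is to localise the energy estimate to the bounded core $\mathcal{O}_{\sqrt{2}k}$ and to exploit the spectral gap $\lambda_{i+1}\to\infty$: after isolating the one contribution forced upon us by Hypothesis \ref{AonH-N}, every other term will be either absorbed by the enhanced dissipation $\nu\lambda_{i+1}\|\bar{\v}_{i,2}\|^2$ (leaving a $\lambda_{i+1}^{-1}$-small remainder) or uniformly bounded in $(s,\v_0,t)$ (so that its Gronwall convolution against $e^{-\nu\lambda_{i+1}(s-\cdot)}$ again gains a factor $\lambda_{i+1}^{-1}$). Concretely, I would multiply the first equation of \eqref{2-A} by $\varrho_k$, use $\varrho_k\Delta\v=\Delta\bar{\v}-\v\Delta\varrho_k-2\nabla\varrho_k\cdot\nabla\v$ to write down the equation satisfied by $\bar{\v}=\varrho_k\v$ on $\mathcal{O}_{\sqrt{2}k}$ with homogeneous Dirichlet data, apply $\I-\P_i$, and pair with $\bar{\v}_{i,2}=(\I-\P_i)\bar{\v}$ in $\L^2(\mathcal{O}_{\sqrt{2}k})$. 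Since $\P_i$ is the spectral projection of $-\Delta$ on $\L^2(\mathcal{O}_{\sqrt{2}k})$ it commutes with the Laplacian and is $\L^2$- and $\H^1_0$-orthogonal, so the viscous term yields $\nu\|\nabla\bar{\v}_{i,2}\|^2\geq\nu\lambda_{i+1}\|\bar{\v}_{i,2}\|^2$ by \eqref{poin-i}, up to the commutator $\nu(\v\Delta\varrho_k+2\nabla\varrho_k\cdot\nabla\v,\bar{\v}_{i,2})$, which lives on the annulus $\{k\leq|x|\leq\sqrt{2}k\}$ (where $|\nabla\varrho_k|\leq C/k$, $|\Delta\varrho_k|\leq C/k^2$) and, carrying a $\bar{\v}_{i,2}$-factor, is absorbed into the dissipation leaving $\tfrac{C}{\lambda_{i+1}}(\|\v\|^2_{\H}+\|\v\|^2_{\V})$.

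Before treating the advection and pressure terms I would record the uniform bound that tames them: combining the $\H$-absorbing estimate and the time-integrated $\V$-estimate of Lemma \ref{Absorbing-N} with the uniform Gronwall lemma gives a constant $C_{\V}=C_{\V}(\tau,\omega)$ and a time $\mathfrak{T}$ such that $\|\v(\uprho,s-t,\theta_{-s}\omega,\v_0)\|_{\V}\leq C_{\V}$ for all $s\leq\tau$, $\v_0\in D(s-t,\theta_{-t}\omega)$, $t\geq\mathfrak{T}$ and $\uprho\in[s-t+\mathfrak{T},s]$; hence $\|\w\|^2_{\H}\|\w\|^2_{\V}$ and $\|\w\|^4_{\L^4(\mathcal{O})}$ are uniformly bounded there, with $\w:=\v+\h z(\theta_t\omega)$ divergence free. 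For advection, using $b(\w,\w,\varrho_k\bar{\v}_{i,2})=-b(\w,\varrho_k\bar{\v}_{i,2},\w)$ and distributing $\nabla(\varrho_k\bar{\v}_{i,2})$, one gets: an annular $O(1/k)$ piece with a $\bar{\v}_{i,2}$-factor (absorbed, hence $\lambda_{i+1}^{-1}$-small); the piece $z(\theta_t\omega)b(\varrho_k\w,\h,\varrho_k\w)$ which by Hypothesis \ref{AonH-N} applied to $\varrho_k\w$ together with $\|\varrho_k\w\|^2_{\H}\leq 2\|\bar{\v}_{i,2}\|^2+2\|\P_i\bar{\v}\|^2+2|z|^2\|\h\|^2_{\H}$ gives $2\aleph|z(\theta_t\omega)|\|\bar{\v}_{i,2}\|^2$ (the critical term, moved to the left) plus terms involving $\|\P_i\bar{\v}\|^2\leq\|\v\|^2_{\H}$ and powers of $|z(\theta_t\omega)|$, which are uniformly controlled; and a principal piece bounded by \eqref{b1} and Young by $\tfrac{\nu}{8}\|\nabla\bar{\v}_{i,2}\|^2+C\|\w\|^2_{\H}\|\w\|^2_{\V}$; the leftover nonlinear terms $b(\w,\w,\varrho_k^2\w)$ and $b(\w,\w,\varrho_k\P_i(\varrho_k\v))$ are handled similarly via \eqref{b0}, \eqref{b1} and Gagliardo--Nirenberg, using $\|\nabla\P_i(\varrho_k\v)\|\leq\|\nabla(\varrho_k\v)\|$. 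For the pressure, \eqref{p-value} and \eqref{p-value-N} give $\|p\|_{\L^2(\mathcal{O})}\leq C\|\w\|^2_{\L^4(\mathcal{O})}$; in $(\varrho_k\nabla p,\bar{\v}_{i,2})=-(p,\varrho_k\nabla\!\cdot\!\bar{\v}_{i,2}+\nabla\varrho_k\cdot\bar{\v}_{i,2})$ the $\nabla\varrho_k$-term is annular with a $\bar{\v}_{i,2}$-factor, while $\nabla\!\cdot\!\bar{\v}_{i,2}=\nabla\varrho_k\cdot\v-\nabla\!\cdot\!\P_i\bar{\v}$ with $\|\nabla\!\cdot\!\P_i\bar{\v}\|_{\L^2}\leq C\|\nabla\bar{\v}\|_{\L^2}$, so this contributes a term $\leq C\|\w\|^2_{\L^4(\mathcal{O})}(\|\v\|_{\V}+\tfrac1k\|\v\|_{\H})\leq C\|\w\|^2_{\H}\|\w\|^2_{\V}+C\|\v\|^2_{\V}$, exactly as in \eqref{ep9-N}. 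Estimating the forcing and Ornstein--Uhlenbeck terms by $\tfrac{\nu}{8}\|\nabla\bar{\v}_{i,2}\|^2+\tfrac{C}{\lambda_{i+1}}(\|\f\|^2_{\H}+|z(\theta_t\omega)|^2)$ and collecting, one arrives at
\begin{align*}
\frac{\d}{\d t}\|\bar{\v}_{i,2}\|^2_{\L^2(\mathcal{O}_{\sqrt{2}k})}+\big(\nu\lambda_{i+1}-4\aleph|z(\theta_t\omega)|\big)\|\bar{\v}_{i,2}\|^2_{\L^2(\mathcal{O}_{\sqrt{2}k})}\leq C\,\mathcal{M}(t)+\frac{C}{\lambda_{i+1}}\big(\|\f(t)\|^2_{\H}+|z(\theta_t\omega)|^2\big),
\end{align*}
where $\mathcal{M}(t)$ gathers the remaining (advection, pressure, commutator, $\aleph$-remainder) contributions, bounded on $[s-t+\mathfrak{T},s]$ by $C(C_{\V}^{4}+1)+C|z(\theta_t\omega)|^{4}$.

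It remains to run the backward Gronwall argument. Applying the variation-of-constants formula to the above inequality on $(s-t,s)$ with initial datum $\bar{\v}_{0,2}=(\I-\P_i)(\varrho_k\v_0)$ and replacing $\omega$ by $\theta_{-s}\omega$, the initial-data term is at most $e^{-\nu\lambda_{i+1}t+4\aleph\int_{-t}^{0}|z(\theta_{\upeta}\omega)|\d\upeta}\sup_{s\leq\tau}\|D(s-t,\theta_{-t}\omega)\|^2_{\H}$, which tends to $0$ as $t\to+\infty$ (uniformly in $i$, since $\lambda_{i+1}\geq\lambda_1$) by the sublinear growth \eqref{Z3} of $\int_{-t}^{0}|z(\theta_{\upeta}\omega)|\d\upeta$ and the backward temperedness \eqref{BackTem} of $D$. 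For the source term, the bound $\mathcal{M}\leq C(C_{\V}^{4}+1)+C|z(\theta_t\omega)|^{4}$ on $[s-t+\mathfrak{T},s]$, the integrability in $\uprho$ of the kernel $e^{-\int_{\uprho}^{s}(\nu\lambda_{i+1}-4\aleph|z|)\d\upeta}$ (again by \eqref{Z3}), and the estimates \eqref{G3}, \eqref{Z2}--\eqref{Z5}, \eqref{IRAS2-N} give that the time-integral of the right-hand side is bounded by $C(\tau,\omega)/\lambda_{i+1}$, uniformly in $s\leq\tau$, $\v_0\in D(s-t,\theta_{-t}\omega)$ and $t\geq\mathfrak{T}$, plus an exponentially small contribution from $[s-t,s-t+\mathfrak{T}]$. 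Hence $\limsup_{t\to+\infty}\sup_{s\leq\tau}\sup_{\v_0\in D(s-t,\theta_{-t}\omega)}\|\bar{\v}_{i,2}(s)\|^2_{\L^2(\mathcal{O}_{\sqrt{2}k})}\leq C(\tau,\omega)/\lambda_{i+1}$, and letting $i\to+\infty$ gives \eqref{FL-P}.

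I expect the main obstacle to be the bilinear/pressure bookkeeping of the second paragraph: once the genuinely critical term $4\aleph|z(\theta_t\omega)|\|\bar{\v}_{i,2}\|^2$ is peeled off (and controlled on the left by $\nu\lambda_{i+1}$), one must verify that every remaining contribution of the advection $(\w\cdot\nabla)\w$ and of the pressure $p=(-\Delta)^{-1}[\Tr(\nabla\w)^2]$ is dominated either by $\tfrac{\nu}{2}\|\nabla\bar{\v}_{i,2}\|^2$ with a $\lambda_{i+1}^{-1}$-small remainder, or by the uniformly bounded quantity $\|\w\|^2_{\H}\|\w\|^2_{\V}$ (equivalently $\|\w\|^4_{\L^4(\mathcal{O})}$), together with the uniform $\V$-estimate that this requires; the careful handling of the $\L^4(\mathcal{O})$-norm of the solution forced by the pressure, as the authors signal, is precisely the delicate point.
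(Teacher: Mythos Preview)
Your strategy differs from the paper's in a structural way, and that difference opens a real gap.  The paper keeps only the Poincar\'e constant $\nu\lambda$ (not $\nu\lambda_{i+1}$) on the left of the Gronwall inequality; the smallness in $i$ is produced \emph{term by term on the right}, by trading every factor $\|\bar{\v}_{i,2}\|_{\L^2}$ for $\lambda_{i+1}^{-1/2}\|\nabla\bar{\v}_{i,2}\|_{\L^2}$ via \eqref{poin-i} and Gagliardo--Nirenberg on the bounded core $\mathcal{O}_{\sqrt{2}k}$.  This yields a differential inequality of the form \eqref{FL7} in which each source term carries an explicit prefactor $\lambda_{i+1}^{-\alpha}$ or $\|(\I-\P_i)(\varrho_k\h)\|$.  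After variation of constants with the \emph{same} weight as in Lemma~\ref{Absorbing-N}, the resulting time integrals are precisely the quantities already controlled there (and a higher $\H$-power handled as in \eqref{ep7-N}); the $\lambda_{i+1}^{-\alpha}$ prefactors then force the limit.  No pointwise $\V$-bound is ever used.

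Your route, by contrast, hinges on the claim that the uniform Gronwall lemma, fed only the $\H$-absorbing bound and the weighted time-integrated $\V$-bound of Lemma~\ref{Absorbing-N}, delivers a constant $C_{\V}$ with $\|\v(\uprho)\|_{\V}\leq C_{\V}$ for all $\uprho\in[s-t+\mathfrak{T},s]$, uniformly in $s\leq\tau$ and $t\geq\mathfrak{T}$.  This is not available from Lemma~\ref{Absorbing-N}: the uniform Gronwall lemma needs a differential inequality for $\|\v\|_{\V}^2$ (i.e.\ the $\A\v$-test), which the paper never derives, and even granting that inequality the coefficients involve $|z(\theta_{\uprho-s}\omega)|$, which is unbounded as $\uprho-s\to-\infty$, so the required unit-interval bounds on $\int g,\int h,\int y$ are not uniform over the backward-infinite range of $\uprho$.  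Without this pointwise $\V$-bound your function $\mathcal{M}$ (which contains $\|\w\|_{\H}^2\|\w\|_{\V}^2$ and $\|\v\|_{\V}^2$ with no $\lambda_{i+1}^{-\alpha}$ or $\bar{\v}_{i,2}$ factor) is only controlled in the weighted integral sense of Lemma~\ref{Absorbing-N}; integrating it against the sharper kernel $e^{-\nu\lambda_{i+1}(s-\uprho)+\dots}$ therefore gives a quantity bounded by the Lemma~\ref{Absorbing-N} integral, but \emph{not} one that tends to $0$ as $i\to\infty$.  To close your argument you would either have to establish a genuine backward-uniform pointwise $\V$-absorbing estimate on the unbounded Poincar\'e domain (a substantial extra ingredient), or revert to the paper's device of extracting $\lambda_{i+1}^{-\alpha}$ factors on the right and running Gronwall with the weight $\nu\lambda$.
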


\begin{proof}
	Multiplying by $\varrho_k$ in the first equation of \eqref{2-A}, we rewrite the equation as:
	\begin{align}\label{FL1}
		&\frac{\d\bar{\v}}{\d t}-\nu\Delta\bar{\v}+\varrho_k\big((\v+\h z)\cdot\nabla\big)(\v+\h z)+\varrho_k\nabla p\nonumber\\&\quad=-\nu\v\Delta\varrho_k-2\nu\nabla\varrho_k\cdot\nabla\v+\varrho_k\f +\sigma\varrho_k\h z+\nu z\varrho_k\Delta\h.
\end{align}
Applying $(\I-\P_i)$ to the equation \eqref{FL1} and taking the inner product of the resulting equation with $\bar{\v}_{i,2}$ in $\L^2(\mathcal{O}_{\sqrt{2}k})$, we get
\begin{align}\label{FL2}
	&\frac{1}{2}\frac{\d}{\d t}\|\bar{\v}_{i,2}\|^2_{\L^2(\mathcal{O}_{\sqrt{2}k})} +\nu\|\nabla\bar{\v}_{i,2}\|^2_{\L^2(\mathcal{O}_{\sqrt{2}k})}\nonumber\\&=-\underbrace{\sum_{q,m=1}^{2}\int_{\mathcal{O}_{\sqrt{2}k}}\left(\I-\P_i\right)\bigg[(v_{q}+h_{q}z(\theta_{t}\omega))\frac{\partial(v_{m}+h_{m}z(\theta_{t}\omega))}{\partial x_q}\left\{\varrho_k(x)\right\}^2(v_{m}+h_{m}z(\theta_{t}\omega))\bigg]\d x}_{:=J_1}\nonumber\\&\quad+\underbrace{z(\theta_{t}\omega)\sum_{q,m=1}^{2}\int_{\mathcal{O}_{\sqrt{2}k}}\left(\I-\P_i\right)\bigg[(v_{q}+h_{q}z(\theta_{t}\omega))\frac{\partial(v_{m}+h_{m}z(\theta_{t}\omega))}{\partial x_q}\left\{\varrho_k(x)\right\}^2h_{m}\bigg]\d x}_{:=J_2}\nonumber\\&\quad-\underbrace{\left\{\nu\big(\v\Delta\varrho_k,\bar{\v}_{i,2}\big)+2\nu\big(\nabla\varrho_k\cdot\nabla\v,\bar{\v}_{i,2}\big)-\big(\varrho_k\f,\bar{\v}_{i,2}\big) -\sigma z\big(\varrho_k\h,\bar{\v}_{i,2}\big)-\nu z\big(\varrho_k\Delta\h,\bar{\v}_{i,2}\big)\right\}}_{:=J_3}\nonumber\\&\quad-\underbrace{\big(\varrho_k(x)\nabla p, \bar{\v}_{i,2}\big)}_{:=J_4}.
\end{align}
Next, we estimate each terms of \eqref{FL2} as follows: Using integration by parts, divergence free condition of $\v(\cdot)$, \eqref{poin-i} (WLOG we assume that $\lambda_{i}\geq1$), H\"older's, Gagliardo-Nirenberg's (Theorem 1, \cite{Nirenberg}) and Young's inequalities, we find
\begin{align}
\left|J_1\right|&=\left|2\int_{\mathcal{O}_{\sqrt{2}k}}\left(\I-\P_i\right)\bigg[\uprho'\left(\frac{|x|^2}{k^2}\right)\frac{x}{k^2}\cdot\left\{\varrho_k(x)\v+\varrho_k(x)\boldsymbol{h}z(\theta_{t}\omega)\right\}|\v+\boldsymbol{h}z(\theta_{t}\omega)|^2\bigg]\d x\right|\nonumber\\&\leq C\left[\|\bar{\v}_{i,2}\|_{\L^3(\mathcal{O}_{\sqrt{2}k})}+\left|z(\theta_{t}\omega)\right|\|(\I-\P_i)(\varrho_k(x)\boldsymbol{h})\|_{\L^3(\mathcal{O}_{\sqrt{2}k})}\right]\|\v+\boldsymbol{h}z(\theta_{t}\omega)\|^2_{\L^3(\mathcal{O})}\nonumber\\&\leq C\bigg[\|\bar{\v}_{i,2}\|^{\frac{2}{3}}_{\L^2(\mathcal{O}_{\sqrt{2}k})}\|\nabla\bar{\v}_{i,2}\|^{\frac{1}{3}}_{\L^2(\mathcal{O}_{\sqrt{2}k})}+\left|z(\theta_{t}\omega)\right|\|(\I-\P_i)(\varrho_k(x)\boldsymbol{h})\|^{\frac{2}{3}}_{\L^2(\mathcal{O}_{\sqrt{2}k})}\nonumber\\&\qquad\times\|\nabla(\I-\P_i)(\varrho_k(x)\boldsymbol{h})\|^{\frac{1}{3}}_{\L^2(\mathcal{O}_{\sqrt{2}k})}\bigg]\|\v+\boldsymbol{h}z(\theta_{t}\omega)\|^{\frac{4}{3}}_{\H}\|\v+\boldsymbol{h}z(\theta_{t}\omega)\|^{\frac{2}{3}}_{\V}\nonumber\\&\leq C\lambda^{-1/3}_{i+1}\left[\|\nabla\bar{\v}_{i,2}\|_{\L^2(\mathcal{O}_{\sqrt{2}k})}+\left|z(\theta_{t}\omega)\right|\|\nabla[(\I-\P_i)(\varrho_k(x)\boldsymbol{h})]\|_{\L^2(\mathcal{O}_{\sqrt{2}k})}\right]\nonumber\\&\qquad\times\|\v+\boldsymbol{h}z(\theta_{t}\omega)\|^{\frac{4}{3}}_{\H}\|\v+\boldsymbol{h}z(\theta_{t}\omega)\|^{\frac{2}{3}}_{\V}\nonumber\\&\leq \frac{\nu}{8}\|\nabla\bar{\v}_{i,2}\|^2_{\L^2(\mathcal{O}_{\sqrt{2}k})}+C\left|z(\theta_{t}\omega)\right|^2\|\nabla[(\I-\P_i)(\varrho_k(x)\boldsymbol{h})]\|^2_{\L^2(\mathcal{O}_{\sqrt{2}k})}+C\lambda^{-1}_{i+1}\|\v\|^8_{\H}\nonumber\\&\quad+C\lambda^{-1/2}_{i+1}\|\v\|^2_{\V}+C\lambda^{-1}_{i+1}|z(\theta_{t}\omega)|^8+C\lambda^{-1/3}_{i+1},\label{FL3}\\
\left|J_2\right|&\leq\left|z(\theta_{t}\omega)\right|\big|b\big(\bar{\v}_{i,2}+(\I-\P_i)\left(\varrho_k(x)\boldsymbol{h}z(\theta_{t}\omega)\right),\h,\bar{\v}_{i,2}+(\I-\P_i)\left(\varrho_k(x)\boldsymbol{h}z(\theta_{t}\omega)\right)\big)\big|\nonumber\\&\quad+2\left|z(\theta_{t}\omega)\right|\left|\int_{\mathcal{O}_{\sqrt{2}k}}(\I-\P_i)\bigg[\varrho_k(x)\uprho'\left(\frac{|x|^2}{k^2}\right)\big[\frac{x}{k^2}\cdot(\v+\h z(\theta_{t}\omega))\big]\big[(\v+\h z(\theta_{t}\omega))\cdot \h\big]\bigg]\d x\right|\nonumber\\&\leq \aleph\left|z(\theta_{t}\omega)\right|\|\bar{\v}_{i,2}+(\I-\P_i)\left(\varrho_k(x)\boldsymbol{h}z(\theta_{t}\omega)\right)\|^2_{\L^2(\mathcal{O}_{\sqrt{2}k})}\nonumber\\&\quad+C\left|z(\theta_{t}\omega)\right|\|\bar{\v}_{i,2}+(\I-\P_i)\left(\varrho_k(x)\boldsymbol{h}z(\theta_{t}\omega)\right)\|_{\L^2(\mathcal{O}_{\sqrt{2}k})}\|\v+\boldsymbol{h}z(\theta_{t}\omega)\|_{\L^4(\mathcal{O})}\|\h\|_{\L^4(\mathcal{O})}\nonumber\\&\leq \aleph\left|z(\theta_{t}\omega)\right|\|\bar{\v}_{i,2}+(\I-\P_i)\left(\varrho_k(x)\boldsymbol{h}z(\theta_{t}\omega)\right)\|^2_{\L^2(\mathcal{O}_{\sqrt{2}k})}\nonumber\\&\quad+C\lambda^{-1/4}_{i+1}\left|z(\theta_{t}\omega)\right|\|\nabla\bar{\v}_{i,2}+\nabla\left[(\I-\P_i)\left(\varrho_k(x)\boldsymbol{h}z(\theta_{t}\omega)\right)\right]\|^{1/2}_{\L^2(\mathcal{O}_{\sqrt{2}k})}\|\v+\boldsymbol{h}z(\theta_{t}\omega)\|_{\H}\nonumber\\&\qquad\times\|\v+\boldsymbol{h}z(\theta_{t}\omega)\|^{1/2}_{\V}\nonumber\\&\leq2\aleph\left|z(\theta_{t}\omega)\right|\|\bar{\v}_{i,2}\|^2_{\L^2(\mathcal{O}_{\sqrt{2}k})}+\frac{\nu}{8}\|\nabla\bar{\v}_{i,2}\|^{2}_{\L^2(\mathcal{O}_{\sqrt{2}k})}+C\lambda^{-1/3}_{i+1}\|\v\|^2_{\V}+C\lambda^{-1/3}_{i+1}\|\v\|^8_{\H}\nonumber\\&\quad+C\left|z(\theta_{t}\omega)\right|^3\|(\I-\P_i)\left(\varrho_k(x)\boldsymbol{h}\right)\|^2_{\L^2(\mathcal{O}_{\sqrt{2}k})}+C|z(\theta_{t}\omega)|^2\|\nabla\left[(\I-\P_i)\left(\varrho_k(x)\boldsymbol{h}\right)\right]\|^{2}_{\L^2(\mathcal{O}_{\sqrt{2}k})}\nonumber\\&\quad+C\lambda^{-1/3}_{i+1}|z(\theta_{t}\omega)|^8+C\lambda^{1/3}_{i+1},\label{FL4}
\\
\left|J_3\right|&\leq C\bigg[\|\v\|_{\H}+ \|\v\|_{\V}+\|\f\|_{\H}+\left|z(\theta_{t}\omega)\right|\bigg]\|\bar{\v}_{i,2}\|_{\L^2(\mathcal{O}_{\sqrt{2}k})}\nonumber\\&\leq C\lambda^{-1/2}_{i+1}\bigg[\|\v\|_{\V}+\|\f\|_{\H}+\left|z(\theta_{t}\omega)\right|\bigg]\|\nabla\bar{\v}_{i,2}\|_{\L^2(\mathcal{O}_{\sqrt{2}k})}\nonumber\\&\leq\frac{\nu}{8}\|\nabla\bar{\v}_{i,2}\|^2_{\L^2(\mathcal{O}_{\sqrt{2}k})}+ C\lambda^{-1}_{i+1}\bigg[\|\v\|^2_{\V}+\|\f\|^2_{\H}\bigg]+C\lambda^{-1}_{i+1}\left|z(\theta_{t}\omega)\right|^8+C\lambda^{-1}_{i+1},\label{FL5}
\\
\left|J_4\right|&=\left|\int_{\mathcal{O}_{\sqrt{2}k}}(\I-\P_i)\bigg[\nabla p\left\{\varrho_k(x)\right\}^2\v\bigg]\d x\right|=2\left|\int_{\mathcal{O}_{\sqrt{2}k}}(\I-\P_i)\bigg[\uprho'\left(\frac{|x|^2}{k^2}\right)p\varrho_k(x)\v\bigg]\d x\right|\nonumber\\&\leq C\|p\|_{\mathrm{L}^2(\mathcal{O})}\|\bar{\v}_{i,2}\|_{\L^2(\mathcal{O}_{\sqrt{2}k})}\nonumber\\&\leq C\lambda^{-1/4}_{i+1}\|\v+\boldsymbol{h}z(\theta_{t}\omega)\|^2_{\L^4(\mathcal{O})}\|\bar{\v}_{i,2}\|^{1/2}_{\L^2(\mathcal{O}_{\sqrt{2}k})}\|\nabla\bar{\v}_{i,2}\|^{1/2}_{\L^2(\mathcal{O}_{\sqrt{2}k})}\nonumber\\&\leq C\lambda^{-1/4}_{i+1}\|\v+\boldsymbol{h}z(\theta_{t}\omega)\|_{\H}\|\v+\boldsymbol{h}z(\theta_{t}\omega)\|_{\V}\|\v\|^{1/2}_{\H}\|\nabla\bar{\v}_{i,2}\|^{1/2}_{\L^2(\mathcal{O}_{\sqrt{2}k})}\nonumber\\&\leq\frac{\nu}{8}\|\nabla\bar{\v}_{i,2}\|^2_{\L^2(\mathcal{O}_{\sqrt{2}k})}+C\lambda^{-1/4}_{i+1}\|\v\|^2_{\V}+C\lambda^{-1/2}_{i+1}\|\v\|^8_{\H}+C\lambda^{-1/2}_{i+1}|z(\theta_{t}\omega)|^8+C\lambda^{-1/2}_{i+1}+C\lambda^{-1/6}_{i+1},\label{FL6}
\end{align}
where we have used Hypothesis \ref{AonH-N} and equation \eqref{p-value-N} in \eqref{FL4} and \eqref{FL6}, respectively. Now, combining \eqref{FL2}-\eqref{FL6} and using \eqref{poin} in the resulting inequality, we arrive at
\begin{align}\label{FL7}
	&\frac{\d}{\d t}\|\bar{\v}_{i,2}\|^2_{\L^2(\mathcal{O}_{\sqrt{2}k})} +\left(\nu\lambda-4\aleph\left|z(\theta_{t}\omega)\right|\right)\|\bar{\v}_{i,2}\|^2_{\L^2(\mathcal{O}_{\sqrt{2}k})} \nonumber\\&\leq \I_1(i)\left|z(\theta_{t}\omega)\right|^2+\I_2(i)\left|z(\theta_{t}\omega)\right|^3+\I_3(i)\|\v\|^8_{\H}+\I_4(i)\|\v\|^2_{\V}+\I_3(i)|z(\theta_{t}\omega)|^8+\I_5(i)\|\f\|^2_{\H}+\I_{6}(i),
\end{align}
where
\begin{align*}
	&\I_1(i)=C\|(\I-\P_i)\left(\varrho_k(x)\boldsymbol{h}\right)\|^2_{\L^2(\mathcal{O}_{\sqrt{2}k})},\ \ \ \I_2(i)=C\|\nabla[(\I-\P_i)(\varrho_k(x)\boldsymbol{h})]\|^2_{\L^2(\mathcal{O}_{\sqrt{2}k})},\nonumber\\&\I_3(i)=C\left[\lambda^{-1/3}_{i+1}+\lambda^{-1/2}_{i+1}+\lambda^{-1}_{i+1}\right],\ \ \ \ \ \ \ \  \I_4(i)=C\left[\lambda^{-1/4}_{i+1}+\lambda^{-1/3}_{i+1}+\lambda^{-1/2}_{i+1}+\lambda^{-1}_{i+1}\right],\nonumber\\&\I_5(i)=C\lambda^{-1}_{i+1}\ \text{ and }\ \I_6(i)=C\left[\lambda^{-1/6}_{i+1}+\lambda^{-1/3}_{i+1}+\lambda^{-1/2}_{i+1}+\lambda^{-1}_{i+1}\right].
\end{align*}
Due to the fact that $\h\in\D(\A)$ and $\lambda_i\to+\infty$ as $i\to+\infty$, we deduce that
\begin{align}\label{i_convergence}
	\lim_{i\to+\infty}\I_1(i)=\lim_{i\to+\infty}\I_2(i)=\lim_{i\to+\infty}\I_3(i)=\lim_{i\to+\infty}\I_4(i)=\lim_{i\to+\infty}\I_5(i)=\lim_{i\to+\infty}\I_6(i)=0.
\end{align} In the view of  variation of constant formula in \eqref{FL7}, we find
\begin{align}\label{FL8}
	&\|(\I-\P_{i})\bar{\v}(s,s-t,\theta_{-s},\bar{\v}_{0,2})\|^2_{\L^2(\mathcal{O}_{\sqrt{2}k})}\nonumber\\&\leq e^{-\nu\lambda t+4{\aleph}\int^{0}_{-t}\left|z(\theta_{\upeta}\omega)\right|\d\upeta}\|(\I-\P_i)(\varrho_k\v_{0})\|^2_{\L^2(\mathcal{O}_{\sqrt{2}k})}+\I_6(i)\underbrace{\int_{-t}^{0}e^{\nu\lambda\uprho-4{\aleph}\int^{\uprho}_{0}\left|z(\theta_{\upeta}\omega)\right|\d\upeta}\d\uprho}_{L_7(t)}\nonumber\\&\quad+\I_4(i)\underbrace{\int_{s-t}^{s}e^{\nu\lambda(\uprho-s)-4{\aleph}\int^{\uprho}_{s}\left|z(\theta_{\upeta-s}\omega)\right|\d\upeta}\|\v(\uprho,s-t,\theta_{-s}\omega,\v_{0})\|^2_{\V}\d\uprho}_{L_1(s,t)}\nonumber\\&\quad+\I_3(i)\underbrace{\int_{s-t}^{s}e^{\nu\lambda(\uprho-s)-4{\aleph}\int^{\uprho}_{s}\left|z(\theta_{\upeta-s}\omega)\right|\d\upeta}\|\v(\uprho,s-t,\theta_{-s}\omega,\v_{0})\|^8_{\H}\d\uprho}_{L_2(s,t)}\nonumber\\&\quad+\I_1(i)\underbrace{\int_{-t}^{0}e^{\nu\lambda\uprho-4{\aleph}\int^{\uprho}_{0}\left|z(\theta_{\upeta}\omega)\right|\d\upeta}\left|z(\theta_{\uprho}\omega)\right|^{2}\d\uprho}_{L_3(t)}\ +\ \I_2(i)\underbrace{\int_{-t}^{0}e^{\nu\lambda\uprho-4{\aleph}\int^{\uprho}_{0}\left|z(\theta_{\upeta}\omega)\right|\d\upeta}\left|z(\theta_{\uprho}\omega)\right|^{3}\d\uprho}_{L_4(t)}\nonumber\\&\quad+\I_3(i)\underbrace{\int_{-t}^{0}e^{\nu\lambda\uprho-4{\aleph}\int^{\uprho}_{0}\left|z(\theta_{\upeta}\omega)\right|\d\upeta}\left|z(\theta_{\uprho}\omega)\right|^{8}\d\uprho}_{L_5(t)}\ + \ \I_5(i)\underbrace{\int_{-t}^{0}e^{\nu\lambda\uprho-4{\aleph}\int^{\uprho}_{0}\left|z(\theta_{\upeta}\omega)\right|\d\upeta}\|\f(\uprho+s)\|^2_{\H}\d\uprho}_{L_6(s,t)}.
\end{align}
It implies from \eqref{Z3}, \eqref{G3}, \eqref{AB1-N}, \eqref{Z6} and \eqref{IRAS2-N} that
\begin{equation}\label{FL9}
	\left\{
	\begin{aligned}
		\sup_{s\leq \tau}L_1(s,t)<+\infty,\	\sup_{s\leq \tau}L_6(s,t)<+\infty,\ \ \ \ \ \ \ \ \ \ \ \ \ \ \ \ \\ L_3(t)<+\infty,\ L_4(t)<+\infty,\ L_5(t)<+\infty\ \text{ and }\ L_7(t)<+\infty,
	\end{aligned}
	\right.
\end{equation}
for sufficiently large $t>0$. Moreover, similar arguments as in \eqref{ep8-N} provide
\begin{align}\label{FL10}
	\sup\limits_{s\leq \tau}L_2(s,t)<+\infty.
\end{align} Further,
\begin{align}\label{FL11}
	\|(\I-\P_i)(\varrho_k\v_{0})\|^2_{\L^2(\mathcal{O}_{\sqrt{2}k})}\leq C\|\v_{0}\|^2_{\H},
\end{align}
for all $\v_{0}\in D(s-t,\theta_{-t}\omega)$ and $s\leq\tau$. Now, using the definition of backward temperedness \eqref{BackTem}, \eqref{Z3}, \eqref{G3}, \eqref{Z6}, Lemma \ref{Absorbing-N} and \eqref{i_convergence}, \eqref{FL9}-\eqref{FL11} in \eqref{FL8}, we obtain \eqref{FL-P}, as desired, which  completes the proof.
\end{proof}

\subsection{Proof of Theorem \ref{MT1-N}}\label{thm1.4}
In this subsection, we demonstrate the main result of this section, that is, the existence of $\mathfrak{D}$-pullback random attractors and their asymptotic autonomy for the solution of the system \eqref{SNSE-A}. For the existence of a unique random attractor for autonomous 2D SNSE driven by additive noise on Poincar\'e domains (bounded or unbounded), we  refer to \cite{BCLLLR}. The proof of this theorem is  divided into following seven steps:
	\vskip 2mm
	\noindent
	\textbf{Step I:} \textit{$\mathfrak{D}$-pullback time-semi-uniform asymptotic compactness of $\Phi$.} It is enough to prove that for each $(\tau,\omega,D)\in\R\times\Omega\times\mathfrak{D}$, arbitrary sequences $s_n\leq\tau$, $\tau_{n}\to+\infty$ and $\v_{0,n}\in D(s_n-t_n,\theta_{-t_n}\omega)$,  the sequence $$\v_n=\v(s_n,s_n-t_n,\theta_{-s_n}\omega,\v_{0,n})$$ is pre-compact. Let $E_{N}=\{\v_n:n\geq N\}, \ N=1,2,\ldots.$ In order to prove the pre-compactness of the sequence $\v_n$, it is enough to prove that the Kuratowski measure $\kappa_{\H}(E_{N})\to0$ and $N\to+\infty$, (cf. Lemma \ref{K-BAC}).
	
	For each $\eta>0$, by Lemma \ref{largeradius-N}, there exists $\mathcal{N}_1\in\N$ and $K\geq1$ such that
	\begin{align}\label{MTA1}
		\|\v_n\|_{\L^2(\mathcal{O}^{c}_{K})}\leq\eta, \ \text{ for all } \ n\geq \mathcal{N}_1,
	\end{align}
	where $\mathcal{O}^{c}_{K}=\mathcal{O}\backslash\mathcal{O}_{K}$ and  $\mathcal{O}_{k}=\{x\in\mathcal{O}:|x|\leq k\}.$  By Lemma \ref{Flattening-N}, there exist $i\in\N$ and $\mathcal{N}_2\geq\mathcal{N}_1$ such that
	\begin{align}\label{MTA2}
		\|(\I-\P_i)(\varrho_{K}\v_n)\|_{\L^2(\mathcal{O}_{\sqrt{2}K})}\leq\eta, \ \text{ for all } \ n\geq\mathcal{N}_2.
	\end{align}

Now, Lemma \ref{Absorbing-N} gives us that the set $E_{\mathcal{N}_2}$ is bounded in $\H$. Then, the set $\{\varrho_{K}\v_n:n\geq\mathcal{N}_2\}$ is bounded in $\L^2(\mathcal{O}_{\sqrt{2}K})$. Hence, by the finite-dimensional range of $\P_i$, $\P_{i}\{\varrho_{K}\v_n:n\geq\mathcal{N}_2\}$ is pre-compact in $\L^2(\mathcal{O}_{\sqrt{2}K})$, from which we conclude that
\begin{align}\label{MTA3}
	\kappa_{\L^2(\mathcal{O}_{\sqrt{2}K})}\left(\P_{i}\{\varrho_{K}\v_n:n\geq\mathcal{N}_2\}\right)=0.
\end{align}
It follows from \eqref{MTA2}-\eqref{MTA3} and Theorem 1.4, \cite{Rakocevic} that
	\begin{align}\label{MTA4}
		&\kappa_{\L^2(\mathcal{O}_{\sqrt{2}K})}\left(\{\varrho_{K}\v_n:n\geq\mathcal{N}_2\}\right)\nonumber\\&\leq\kappa_{\L^2(\mathcal{O}_{\sqrt{2}K})}\left(\P_{i}\{\varrho_{K}\v_n:n\geq\mathcal{N}_2\}\right)+\kappa_{\L^2(\mathcal{O}_{\sqrt{2}K})}\left((\I-\P_{i})\{\varrho_{K}\v_n:n\geq\mathcal{N}_2\}\right)\leq2\eta.
	\end{align}
Since $\varrho_{K}\v_n=\v_n$ on $\mathcal{O}_{K}$, we get from \eqref{MTA4} and Lemma 1.2, \cite{Rakocevic}  that
\begin{align}\label{MTA5}
	\kappa_{\L^2(\mathcal{O}_{K})}(E_{\mathcal{N}_2})&=\kappa_{\L^2(\mathcal{O}_{K})}\{\varrho_{K}\v_n:n\geq\mathcal{N}_2\}\leq \kappa_{\L^2(\mathcal{O}_{\sqrt{2}K})}\{\varrho_{K}\v_n:n\geq\mathcal{N}_2\}\leq2\eta.
\end{align}
Since $E_{\mathcal{N}_2}\subset E_{\mathcal{N}_1}$, it implies from \eqref{MTA1} and \eqref{MTA5} that
\begin{align*}
	\kappa_{\H}(E_{\mathcal{N}_2})\leq\kappa_{\L^2(\mathcal{O}_{K})}(E_{\mathcal{N}_2})+\kappa_{\L^2(\mathcal{O}^{c}_{K})}(E_{\mathcal{N}_1})\leq 3\eta,
\end{align*}
which shows that $\Phi$ is time-semi-uniformly asymptotically compact in $\H$.
	\vskip 2mm
\noindent
\textbf{Step II:} \textit{$\mathfrak{B}$-pullback asymptotically compactness of $\Phi$.} It has been proved in \cite{BCLLLR,BL},  and we are omitting the proof here. Moreover, one can prove the $\mathfrak{B}$-pullback asymptotically compactness of $\Phi$ by using similar arguments as in Step I.
	\vskip 2mm
\noindent
\textbf{Step III:} \textit{$\mathfrak{D}$-pullback attractor $\mathcal{A}(\tau,\omega)$.} Proposition \ref{IRAS-N} ((i) part) and Step I ensure us that $\Phi$ has $\mathfrak{D}$-pullback absorbing set and $\Phi$ is $\mathfrak{D}$-pullback asymptotically compact, respectively. Hence, by the abstract theory established in \cite{SandN_Wang}, $\Phi$ has a unique $\mathfrak{D}$-pullback attractor $\mathcal{A}$ which is given by
\begin{align}\label{A1}
	\mathcal{A}=\cap_{t_0>0}\overline{\cup_{t\geq t_0}\Phi(t,\tau-t,\theta_{-t}\omega)\mathcal{R}(\tau-t,\theta_{-t}\omega)}^{\H}.
\end{align}
However, we remark that the $\mathscr{F}$-measurability of $\mathcal{A}$ is unknown, therefore we are saying $\mathcal{A}$ is a $\mathfrak{D}$-pullback attractor instead of $\mathfrak{D}$-pullback random attractor.
	\vskip 2mm
\noindent
\textbf{Step IV:} \textit{$\mathfrak{B}$-pullback attractor $\widetilde{\mathcal{A}}(\tau,\omega)$.} Proposition \ref{IRAS-N} ((ii) part) and Step II ensure us that $\Phi$ has $\mathfrak{B}$-pullback random absorbing set and $\Phi$ is $\mathfrak{B}$-pullback asymptotically compact, respectively. Hence, by the abstract theory established in \cite{SandN_Wang}, $\Phi$ has a unique $\mathfrak{D}$-pullback random attractor $\mathcal{A}$ which is given by
\begin{align}\label{A2}
	\widetilde{\mathcal{A}}=\cap_{t_0>0}\overline{\cup_{t\geq t_0}\Phi(t,\tau-t,\theta_{-t}\omega)\widetilde{\mathcal{R}}(\tau-t,\theta_{-t}\omega)}^{\H}.
\end{align}
	\vskip 2mm
\noindent
\textbf{Step V:} \textit{Time-semi-uniformly compactness of $\mathcal{A}(\tau,\omega)$.} We prove that $\cup_{s\leq\tau}\mathcal{A}(s,\omega)$ is pre-compact in $\H$. Let $\{\u_n\}_{n=1}^{\infty}$ be an arbitrary sequence extracted from $\cup_{s\leq\tau}\mathcal{A}(s,\omega)$. Then, we can find a sequence $s_n\leq\tau$ such that $\u_n\in\mathcal{A}(s_n,\omega)$ for each $n\in\N$. Now, for the sequence $t_n\to\infty$, by the invariance property of $\mathcal{A}$ we have $\u_n\in\Phi(t_n,s_n-t_n,\theta_{-t_{n}}\omega)\mathcal{A}(s_n-t_n,\theta_{-t_{n}}\omega)$. It implies that we can find $\u_{0,n}\in\mathcal{A}(s_n-t_n,\theta_{-t_{n}}\omega)$ such that $\u_n=\Phi(t_n,s_n-t_n,\theta_{-t_{n}}\omega,\u_{0,n})$. Here, $\u_{0,n}\in\mathcal{A}(s_n-t_n,\theta_{-t_{n}}\omega)\subseteq\mathcal{R}(s_n-t_n,\theta_{-t_{n}}\omega)$ with $s_n\leq\tau$ and $\mathcal{R}\in\mathfrak{D}$, and  it follows from  the $\mathfrak{D}$-pullback time-semi-uniform asymptotic compactness of $\Phi$ that the sequence $(\u_n)$ is pre-compact in $\H$. Hence, $\cup_{s\leq\tau}\mathcal{A}(s,\omega)$ is pre-compact in $\H$.
	\vskip 2mm
\noindent
\textbf{Step VI:} \textit{$\mathcal{A}(\tau,\omega)=\widetilde{\mathcal{A}}(\tau,\omega)$. This  implies that $\Phi$ has a unique pullback \textbf{random} attractor which is time-semi-uniformly compact in $\H$.} Let us fix $(\tau,\omega)\in\R\times\Omega$. Since, by Proposition \ref{IRAS-N}, $\mathcal{R}(\tau,\omega)\supseteq\widetilde{\mathcal{R}}(\tau,\omega)$, it follows from \eqref{A1} and \eqref{A2} that $\mathcal{A}(\tau,\omega)\supseteq\widetilde{\mathcal{A}}(\tau,\omega)$. At the same time, since $\mathcal{A}\in\mathfrak{B}\subseteq\mathfrak{D}$, the invariance property of $\mathcal{A}$ and the attraction property of $\widetilde{\mathcal{A}}$ imply that
\begin{align*}
	\text{dist}_{\H}(\mathcal{A}(\tau,\omega),\widetilde{\mathcal{A}}(\tau,\omega))=\text{dist}_{\H}(\Phi(t,\tau-t,\theta_{-t}\omega)\mathcal{A}(\tau-t,\theta_{-t}\omega),\widetilde{\mathcal{A}}(\tau,\omega))\to 0\ \text{ as } \ t\to+\infty.
\end{align*}
This indicates that $\mathcal{A}(\tau,\omega)\subseteq\widetilde{\mathcal{A}}(\tau,\omega)$. Hence $\mathcal{A}(\tau,\omega)=\widetilde{\mathcal{A}}(\tau,\omega)$, which, in view of the $\mathscr{F}$-measurability of $\widetilde{\mathcal{A}}(\tau,\omega)$, shows that $\mathcal{A}(\tau,\omega)$ is $\mathscr{F}$-measurable.
	\vskip 2mm
	\noindent
\textbf{Step VII:} \textit{Proofs of \eqref{MT2-N} and \eqref{MT3-N}.} By using {Propositions} \ref{IRAS-N} and \ref{Back_conver-N} and \emph{time-semi-uniformly compactness} of $\mathcal{A}(\tau,\omega)$, and applying similar arguments as in the proof of Theorem 5.2, \cite{CGTW}, one can complete the proof. Since, the arguments are similar to the proof of Theorem 5.2 in \cite{CGTW}, we are omitting it here.

\section{Asymptotically autonomous robustness of random attractors of \eqref{1}: multiplicative noise}\label{sec4}\setcounter{equation}{0}
In this section, we consider 2D SNSE equations driven by a multiplicative white noise ($S(\u)=\u$ in \eqref{1}) and establish the existence and asymptotic autonomy of $\mathfrak{D}$-pullback random attractors. Let us define $\v(t,\tau,\omega,\v_{\tau})=e^{-z(\theta_{t}\omega)}\u(t,\tau,\omega,\u_{\tau})\ \text{ with  }\  \v_{\tau}=e^{-z(\theta_{\tau}\omega)}\u_{\tau},$ where $z$ satisfies \eqref{OU2} and $\u(\cdot)$ is the solution of \eqref{1} with $S(\u)=\u$. Then $\v(\cdot)$ satisfies:
		\begin{equation}\label{2-M}
		\left\{
		\begin{aligned}
			\frac{\d\v(t)}{\d t}-\nu \Delta\v(t)&+e^{z(\theta_{t}\omega)}(\v(t)\cdot\nabla)\v(t)+e^{-z(\theta_{t}\omega)}\nabla p\\&=\f(t)e^{-z(\theta_{t}\omega)} +\sigma z(\theta_t\omega)\v(t),  \ \ \ \ \ \ \  \text{ in }\  \mathcal{O}\times(\tau,\infty), \\ \nabla\cdot\v&=0, \hspace{52.5mm} \  \text{ in } \ \ \mathcal{O}\times(\tau,\infty), \\
			\v(x,\tau)&=\v_{0}(x)=e^{-z(\theta_{\tau}\omega)}\u_{0}(x),  \hspace{17mm} \ x\in \mathcal{O} \ \text{ and }\ \tau\in\R,\\
			\v(x,\tau)&=0,\ \ \ \ \ \ \ \  \ \ \ \ \ \ \ \ \qquad\qquad\qquad\qquad \text{ in } \ \ \partial\mathcal{O}\times(\tau,\infty),
		\end{aligned}
		\right.
	\end{equation}
	as well as (projected form)
\begin{equation}\label{CNSE-M}
	\left\{
	\begin{aligned}
		\frac{\d\v(t)}{\d t}+\nu \A\v(t)+e^{z(\theta_{t}\omega)}\B\big(\v(t)\big)&=\f(t) e^{-z(\theta_{t}\omega)} + \sigma z(\theta_t\omega)\v(t) , \quad t> \tau, \tau\in\R ,\\
		\v(x,\tau)&=\v_{0}(x)=e^{-z(\theta_{\tau}\omega)}\u_{0}(x), \hspace{12mm} x\in\mathcal{O},
	\end{aligned}
	\right.
\end{equation}
in $\V^*$. Next, we consider the autonomous SNSE with multiplicative white noise corresponding to the non-autonomous system \eqref{SNSE} with $S(\u)=\u$  as
\begin{equation}\label{A-SNSE-M}
	\left\{
	\begin{aligned}
		\frac{\d\widetilde{\u}(t)}{\d t}+\nu \A\widetilde{\u}(t)+\B(\widetilde{\u}(t))&=\f_{\infty} +\widetilde{\u}(t)\circ\frac{\d \W(t)}{\d t} , \\
		\widetilde{\u}(x,0)&=\widetilde{\u}_{0}(x),	\ x\in \mathcal{O}.
	\end{aligned}
	\right.
\end{equation}
Let $\widetilde{\v}(t,\omega)=e^{-z(\theta_{t}\omega)}\widetilde{\u}(t,\omega)$. Then, the system \eqref{A-SNSE-M} can be written in the following pathwise deterministic system:
\begin{equation}\label{A-CNSE-M}
	\left\{
	\begin{aligned}
		\frac{\d\widetilde{\v}(t)}{\d t}+\nu \A\widetilde{\v}(t)+e^{z(\theta_{t}\omega)}\B\big(\widetilde{\v}(t)\big)&=\f_{\infty} e^{-z(\theta_{t}\omega)} + \sigma z(\theta_t\omega)\widetilde{\v}(t) , \quad t> \tau, \tau\in\R ,\\
		\widetilde{\v}(x,0)&=\widetilde{\v}_{0}(x)=e^{-z(\omega)}\widetilde{\u}_{0}(x), \hspace{13mm} x\in\mathcal{O},
	\end{aligned}
	\right.
\end{equation}
in $\V^*$. The following Lemma shows the well-posedness result for the system \eqref{CNSE-M} which can be proved by a standard Faedo-Galerkin approximation method (cf. \cite{BL}).
\begin{lemma}\label{Soln}
	Suppose that $\f\in\mathrm{L}^2_{\mathrm{loc}}(\R;\H)$. For each $(\tau,\omega,\v_{\tau})\in\R\times\Omega\times\H$, the system \eqref{CNSE-M} has a unique weak solution $\v(\cdot,\tau,\omega,\v_{\tau})\in\mathrm{C}([\tau,+\infty);\H)\cap\mathrm{L}^2_{\mathrm{loc}}(\tau,+\infty;\V)$ such that $\v$ is continuous with respect to the  initial data.
\end{lemma}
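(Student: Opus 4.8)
The plan is to establish existence through a Faedo--Galerkin scheme and uniqueness through an energy estimate for the difference of two solutions, exploiting that for each fixed $\omega\in\Omega$ the coefficient functions $t\mapsto e^{\pm z(\theta_t\omega)}$ and $t\mapsto\sigma z(\theta_t\omega)$ are continuous, hence bounded on every compact interval $[\tau,\tau+T]$, so that the equation \eqref{CNSE-M} is, pathwise, a 2D Navier--Stokes system with bounded time-dependent coefficients.

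First I would fix $T>0$, let $\{e_m\}_{m\ge1}$ be the eigenbasis of $\A$ in $\H$, put $\H_n=\mathrm{span}\{e_1,\dots,e_n\}$ with orthogonal projection $\mathrm{P}_n$, and seek $\v_n(t)=\sum_{m=1}^n c_m^n(t)e_m$ solving the projected ODE $\v_n'+\nu\A\v_n+e^{z(\theta_t\omega)}\mathrm{P}_n\B(\v_n)=e^{-z(\theta_t\omega)}\mathrm{P}_n\f(t)+\sigma z(\theta_t\omega)\v_n$, $\v_n(\tau)=\mathrm{P}_n\v_\tau$. A Carath\'eodory-type local existence result applies, and the a priori bound below makes the solution global on $[\tau,\tau+T]$. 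Testing with $\v_n$ and using $b(\v_n,\v_n,\v_n)=0$ from \eqref{b0}, together with \eqref{poin}, H\"older's and Young's inequalities, gives $\tfrac{\d}{\d t}\|\v_n\|^2_{\H}+\nu\|\v_n\|^2_{\V}\le C(\omega,T)\big(\|\v_n\|^2_{\H}+\|\f(t)\|^2_{\H}\big)$; Gronwall's inequality and $\f\in\mathrm{L}^2_{\mathrm{loc}}(\R;\H)$ then yield a bound for $\{\v_n\}$ in $\mathrm{L}^\infty(\tau,\tau+T;\H)\cap\mathrm{L}^2(\tau,\tau+T;\V)$ uniform in $n$. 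Using the 2D bound $\|\B(\v)\|_{\V^*}\le C\|\v\|_{\H}\|\v\|_{\V}$ (a consequence of \eqref{b1}) together with \eqref{2.7a}, one also bounds $\{\v_n'\}$ in $\mathrm{L}^2(\tau,\tau+T;\V^*)$.

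The delicate point is passing to the limit in the nonlinear term, since on the unbounded domain $\mathcal{O}$ the embedding $\V\hookrightarrow\H$ is not compact and the Aubin--Lions lemma is not directly applicable; this is the main obstacle. I would resolve it by restricting to the bounded subdomains $\mathcal{O}_k$, on which $\mathrm{H}^1_0(\mathcal{O}_k)\hookrightarrow\mathrm{L}^2(\mathcal{O}_k)$ is compact, extracting via Aubin--Lions and a diagonal argument a subsequence converging strongly in $\mathrm{L}^2(\tau,\tau+T;\mathrm{L}^2(\mathcal{O}_k))$ for every $k$, weakly in $\mathrm{L}^2(\tau,\tau+T;\V)$ and weak-$*$ in $\mathrm{L}^\infty(\tau,\tau+T;\H)$. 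Together with the uniform energy bound this suffices to identify the limit of $\int e^{z(\theta_t\omega)}b(\v_n,\v_n,\boldsymbol\phi)$ with $\int e^{z(\theta_t\omega)}b(\v,\v,\boldsymbol\phi)$ for test functions $\boldsymbol\phi$ of compact support, and then for all $\boldsymbol\phi\in\V$ by density, so $\v$ is a weak solution on $[\tau,\tau+T]$ and, $T$ being arbitrary, on $[\tau,\infty)$. Finally, $\v\in\mathrm{L}^2_{\mathrm{loc}}(\tau,\infty;\V)$ with $\v'\in\mathrm{L}^2_{\mathrm{loc}}(\tau,\infty;\V^*)$ gives $\v\in\mathrm{C}([\tau,\infty);\H)$ by the Lions--Magenes lemma; this is exactly the route of \cite{BL}.

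For uniqueness and continuous dependence, let $\v_1,\v_2$ be two solutions and set $\w=\v_1-\v_2$, which satisfies $\w'+\nu\A\w+e^{z(\theta_t\omega)}(\B(\v_1)-\B(\v_2))=\sigma z(\theta_t\omega)\w$ in $\V^*$. Pairing with $\w$, invoking \eqref{441} and \eqref{b0}, and estimating the resulting trilinear term by \eqref{b1} and Young's inequality, one controls it by $\tfrac{\nu}{2}\|\w\|^2_{\V}+C(\omega,T)\|\v_2(t)\|^2_{\V}\|\w\|^2_{\H}$, whence $\tfrac{\d}{\d t}\|\w\|^2_{\H}\le C(\omega,T)\big(\|\v_2(t)\|^2_{\V}+1\big)\|\w\|^2_{\H}$. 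Since $\int_\tau^{\tau+T}\|\v_2(t)\|^2_{\V}\,\d t<\infty$, Gronwall's inequality yields $\|\w(t)\|^2_{\H}\le\|\w(\tau)\|^2_{\H}\exp\!\big(C(\omega,T)\int_\tau^{t}(\|\v_2\|^2_{\V}+1)\,\d r\big)$ on $[\tau,\tau+T]$, which gives both uniqueness and the asserted continuity of $\v$ with respect to the initial data.
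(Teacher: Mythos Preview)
Your approach is correct and follows exactly the route the paper indicates: a standard Faedo--Galerkin approximation together with energy estimates, deferring to \cite{BL} for the details. One small technical slip: on the unbounded domain $\mathcal{O}$ the Stokes operator $\A$ need not have compact resolvent, so a complete eigenbasis of $\A$ in $\H$ is not guaranteed; instead take a countable subset of $\mathcal{V}$ that is total in $\V$ and orthonormalize in $\H$ (this is what \cite{BL} does), after which your compactness-by-localisation and uniqueness arguments go through unchanged.
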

The energy inequality in the following Lemma will be used frequently in the rest of the paper.
	\begin{lemma}
		For $\f\in\mathrm{L}^2_{\mathrm{loc}}(\R;\H)$, the solution of \eqref{CNSE-M} satisfies the following inequality:
		\begin{align}\label{EI1}
			\frac{\d}{\d t} \|\v\|^2_{\H}+ \left(\nu\lambda-2\sigma z(\theta_{t}\omega)\right)\|\v\|^2_{\H}+\frac{\nu}{2}\|\v\|^2_{\V} \leq \frac{2e^{2\left|z(\theta_{t}\omega)\right|}}{\nu\lambda}\|\f\|^2_{\H}.
		\end{align}
	\end{lemma}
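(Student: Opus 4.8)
The plan is to test the projected equation \eqref{CNSE-M} against $\v(t)$ in $\H$ and estimate the resulting terms. First I would take the $\H$-inner product of \eqref{CNSE-M} with $\v$; using the self-adjointness and positivity of $\A$ together with $\langle\A\v,\v\rangle=\|\v\|_\V^2$ from \eqref{2.7a}, and the cancellation property $\langle\B(\v),\v\rangle=b(\v,\v,\v)=0$ from \eqref{b0}, the nonlinear term drops out entirely (this is the key simplification in the multiplicative-noise case, in contrast to the additive case \eqref{ue17-N} where a genuine $\B(\v+\h z)$ term survives). This yields
\[
\frac12\frac{\d}{\d t}\|\v\|_\H^2 + \nu\|\v\|_\V^2 = e^{-z(\theta_t\omega)}(\f,\v) + \sigma z(\theta_t\omega)\|\v\|_\H^2 .
\]

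Next I would handle the forcing term by Cauchy--Schwarz and Young's inequality, splitting the available coercivity $\nu\|\v\|_\V^2$ via the Poincar\'e inequality \eqref{poin}: write $\nu\|\v\|_\V^2 = \tfrac{\nu}{2}\|\v\|_\V^2 + \tfrac{\nu}{2}\|\v\|_\V^2 \geq \tfrac{\nu}{2}\|\v\|_\V^2 + \tfrac{\nu\lambda}{2}\|\v\|_\H^2$, and then bound
\[
e^{-z(\theta_t\omega)}(\f,\v) \leq e^{|z(\theta_t\omega)|}\|\f\|_\H\|\v\|_\H \leq \frac{e^{2|z(\theta_t\omega)|}}{\nu\lambda}\|\f\|_\H^2 + \frac{\nu\lambda}{4}\|\v\|_\H^2 .
\]
Absorbing the $\tfrac{\nu\lambda}{4}\|\v\|_\H^2$ term into the $\tfrac{\nu\lambda}{2}\|\v\|_\H^2$ produced above, multiplying through by $2$, and moving the $2\sigma z(\theta_t\omega)\|\v\|_\H^2$ term to the left-hand side, one arrives at \eqref{EI1}. (If one wishes to match the constant $\tfrac{2}{\nu\lambda}$ exactly, the Young split can be arranged so that the coefficient of $\|\f\|_\H^2$ is exactly $\tfrac{2e^{2|z|}}{\nu\lambda}$; the bookkeeping is routine.)

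There is no serious obstacle here: the only point requiring a word of care is the justification of the energy \emph{equality} $\frac12\frac{\d}{\d t}\|\v\|_\H^2 = \langle \v', \v\rangle$ for the $\V$-valued solution, which is legitimate because $\v\in\mathrm{C}([\tau,\infty);\H)\cap\mathrm{L}^2_{\mathrm{loc}}(\tau,\infty;\V)$ with $\v'\in\mathrm{L}^2_{\mathrm{loc}}(\tau,\infty;\V^*)$ by Lemma \ref{Soln}, so the Lions--Magenes lemma applies and the pairing $\langle\B(\v),\v\rangle$ makes sense via \eqref{b1}. Everything else is Cauchy--Schwarz, Young, and Poincar\'e, exactly as in the derivation of \eqref{EI1-N}.
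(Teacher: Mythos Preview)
Your approach is exactly the paper's: take the inner product of \eqref{CNSE-M} with $\v$, kill the nonlinear term via $b(\v,\v,\v)=0$, and handle $e^{-z}(\f,\v)$ by Cauchy--Schwarz/Young together with Poincar\'e. The only discrepancy is a minor bookkeeping slip: with your $\tfrac{\nu}{2}+\tfrac{\nu}{2}$ split of $\nu\|\v\|_\V^2$ and the Young estimate $e^{-z}(\f,\v)\le \tfrac{\nu\lambda}{4}\|\v\|_\H^2+\tfrac{e^{2|z|}}{\nu\lambda}\|\f\|_\H^2$, after absorption and multiplication by $2$ you obtain the coefficient $\tfrac{\nu\lambda}{2}$ (not $\nu\lambda$) in front of $\|\v\|_\H^2$. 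The paper instead splits $\nu\|\v\|_\V^2=\tfrac{3\nu}{4}\|\v\|_\V^2+\tfrac{\nu}{4}\|\v\|_\V^2$, applies Poincar\'e to the $\tfrac{3\nu}{4}$ piece, and cancels $\tfrac{\nu\lambda}{4}\|\v\|_\H^2$ against it, which yields precisely the stated coefficients $(\nu\lambda-2\sigma z)$ and $\tfrac{\nu}{2}$. Equivalently, you can recover \eqref{EI1} from your version by applying Poincar\'e once more to half of the $\nu\|\v\|_\V^2$ you retained; either way the fix is immediate and the argument is correct.
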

		\begin{proof}
		From the first equation of the system \eqref{CNSE-M} and \eqref{b0}, we obtain
		
		\begin{align*}
		\frac{1}{2}\frac{\d}{\d t} \|\v\|^2_{\H} +\frac{3\nu}{4}\|\v\|^2_{\V}+\frac{\nu}{4}\|\v\|^2_{\V}&= e^{-z(\theta_{t}\omega)}\left(\f,\v\right)+\sigma z(\theta_{t}\omega)\|\v\|^2_{\H}\nonumber\\&\leq \frac{\nu\lambda}{4}\|\v\|^2_{\H}+\frac{e^{2\left|z(\theta_{t}\omega)\right|}}{\nu\lambda}\|\f\|^2_{\H}+\sigma z(\theta_{t}\omega)\|\v\|^2_{\H}.
 		\end{align*}	
		Now, using \eqref{poin} in the second term on the left hand side of the above inequality, one can conclude the proof.
	\end{proof}

Next result shows the Lusin continuity of mapping with respect to $\omega\in\Omega$ of solution to the system \eqref{CNSE-M} which is taken from the work \cite{KRM} (Proposition 3.4, \cite{KRM}).
\begin{proposition}\label{LusinC}
	Suppose that $\f\in\mathrm{L}^2_{\mathrm{loc}}(\R;\H)$. For each $N\in\N$, the mapping $\omega\mapsto\v(t,\tau,\omega,\v_{\tau})$ $($solution of \eqref{CNSE-M}$)$ is continuous from $(\Omega_{N},d_{\Omega_N})$ to $\H$, uniformly in $t\in[\tau,\tau+T]$ with $T>0$.
\end{proposition}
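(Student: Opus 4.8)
The plan is to mimic the proof of Proposition \ref{LusinC-N}, adapting it to the multiplicative-noise transformation. Fix $N\in\N$ and take $\omega_k,\omega_0\in\Omega_N$ with $d_{\Omega_N}(\omega_k,\omega_0)\to 0$ as $k\to\infty$. Write $\v^k=\v(t,\tau,\omega_k,\v_\tau)$, $\v^0=\v(t,\tau,\omega_0,\v_\tau)$ and $\mathscr{V}^k=\v^k-\v^0$ on $[\tau,\tau+T]$. First I would subtract the two copies of \eqref{CNSE-M}: $\mathscr{V}^k$ satisfies, in $\V^*$,
\begin{align*}
	\frac{\d\mathscr{V}^k}{\d t}+\nu\A\mathscr{V}^k
	&+\left[e^{z(\theta_t\omega_k)}\B(\v^k)-e^{z(\theta_t\omega_0)}\B(\v^0)\right]\\
	&=\left[z(\theta_t\omega_k)-z(\theta_t\omega_0)\right]\sigma\v^k
	+\sigma z(\theta_t\omega_0)\mathscr{V}^k
	+\f\left[e^{-z(\theta_t\omega_k)}-e^{-z(\theta_t\omega_0)}\right].
\end{align*}
I would then take the inner product with $\mathscr{V}^k$ in $\H$. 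The nonlinear term I would split as
$e^{z(\theta_t\omega_k)}[\B(\v^k)-\B(\v^0)]+[e^{z(\theta_t\omega_k)}-e^{z(\theta_t\omega_0)}]\B(\v^0)$; for the first piece I use \eqref{441} and \eqref{b1} to bound $\langle\B(\v^k)-\B(\v^0),\mathscr{V}^k\rangle=-\langle\B(\mathscr{V}^k,\mathscr{V}^k),\v^0\rangle$ by $C\|\v^0\|_{\V}^2\|\mathscr{V}^k\|_{\H}^2+\frac{\nu}{4}\|\mathscr{V}^k\|_{\V}^2$, and for the second piece I use $|e^{z(\theta_t\omega_k)}-e^{z(\theta_t\omega_0)}|$ together with \eqref{b1} and Young's inequality to produce a term of the form $C|e^{z(\theta_t\omega_k)}-e^{z(\theta_t\omega_0)}|^2(\|\v^0\|_{\V}^2\|\v^0\|_{\H}^2+1)+\frac{\nu}{4}\|\mathscr{V}^k\|_{\V}^2$, absorbing the $\V$-norm of $\mathscr{V}^k$ on the left. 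The remaining linear terms are handled by Poincar\'e, H\"older and Young, giving contributions controlled by $|z(\theta_t\omega_k)-z(\theta_t\omega_0)|$ and $|e^{-z(\theta_t\omega_k)}-e^{-z(\theta_t\omega_0)}|$.

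This produces a differential inequality of the form
\begin{align*}
	\frac{\d}{\d t}\|\mathscr{V}^k(t)\|_{\H}^2\leq P(t)\|\mathscr{V}^k(t)\|_{\H}^2+Q_k(t),\qquad\text{a.e. }t\in[\tau,\tau+T],
\end{align*}
with $P(t)=C(\|\v^0(t)\|_{\V}^2+|z(\theta_t\omega_0)|+1)$ and $Q_k(t)$ a sum of terms each carrying a factor $|z(\theta_t\omega_k)-z(\theta_t\omega_0)|$, $|z(\theta_t\omega_k)-z(\theta_t\omega_0)|^2$, $|e^{z(\theta_t\omega_k)}-e^{z(\theta_t\omega_0)}|$, $|e^{z(\theta_t\omega_k)}-e^{z(\theta_t\omega_0)}|^2$ or $|e^{-z(\theta_t\omega_k)}-e^{-z(\theta_t\omega_0)}|$, multiplied by quantities built from $\|\v^k\|_{\V}^2$, $\|\v^0\|_{\V}^2$, $\|\v^0\|_{\V}^2\|\v^0\|_{\H}^2$ and $|z(\theta_t\omega_0)|$. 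As in the additive case, I would first use the energy inequality \eqref{EI1} together with the uniform bound \eqref{conv_z2} on $z(\theta_t\omega_k)$ and Gronwall's inequality to get
\begin{align*}
	\sup_{k\in\N}\sup_{t\in[\tau,\tau+T]}\|\v^k(t)\|_{\H}^2+\sup_{k\in\N}\int_\tau^{\tau+T}\|\v^k(t)\|_{\V}^2\d t\leq C(\tau,T,\omega_0),
\end{align*}
using $\f\in\mathrm{L}^2_{\mathrm{loc}}(\R;\H)$. Combined with $\v^0\in\mathrm{C}([\tau,\infty);\H)\cap\mathrm{L}^2_{\mathrm{loc}}(\tau,\infty;\V)$ and Lemma \ref{conv_z}, this yields $\int_\tau^{\tau+T}P(t)\d t\leq C(\tau,T,\omega_0)$ and $\lim_{k\to\infty}\int_\tau^{\tau+T}Q_k(t)\d t=0$ (dominated convergence, since the $z$-differences tend to $0$ uniformly while the $\V$-norms are integrable uniformly in $k$). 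A final application of Gronwall's inequality then gives $\sup_{t\in[\tau,\tau+T]}\|\mathscr{V}^k(t)\|_{\H}^2\to 0$ as $k\to\infty$, which is the claimed Lusin continuity.

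I expect the main obstacle to be exactly the same as in Proposition \ref{LusinC-N}: obtaining the uniform-in-$k$ bounds on $\|\v^k\|_{\H}$ and $\|\v^k\|_{\mathrm{L}^2(\tau,\tau+T;\V)}$, which must precede the Gronwall argument so that the coefficient $P$ and the inhomogeneity $Q_k$ are integrable; here one must be a little careful that the exponential factors $e^{\pm z(\theta_t\omega_k)}$ appearing in \eqref{EI1} are bounded uniformly in $k$ on $[\tau,\tau+T]$, which is precisely the content of \eqref{conv_z2}. Since the structure is entirely parallel to the additive case and the statement is quoted from \cite[Proposition 3.4]{KRM}, the details are routine and can be omitted; alternatively one simply cites \cite{KRM}.
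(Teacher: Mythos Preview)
Your proposal is correct and follows exactly the natural route: adapt the additive-noise argument of Proposition \ref{LusinC-N} to the multiplicative transformation, using \eqref{441} and \eqref{b1} for the bilinear term, the energy inequality \eqref{EI1} together with \eqref{conv_z2} for the uniform-in-$k$ bounds on $\v^k$, and Lemma \ref{conv_z} plus Gronwall to close. The paper itself does not write out a proof at all; it simply records that the result is taken from \cite[Proposition 3.4]{KRM}, so your sketch in fact supplies more detail than the paper does, and your concluding remark that one may alternatively just cite \cite{KRM} matches precisely what the authors do.

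One small point of care you already flag: the coefficient in front of $\|\v^0\|_{\V}^2\|\mathscr{V}^k\|_{\H}^2$ coming from the first bilinear piece carries a factor $e^{2z(\theta_t\omega_k)}$ (not just a constant), so strictly speaking $P(t)$ depends on $k$; but as you note, \eqref{conv_z2} bounds this uniformly in $k$ on $[\tau,\tau+T]$, so one may replace it by a $k$-independent majorant before applying Gronwall. Likewise Lemma \ref{conv_z} only states convergence of $e^{z(\theta_t\omega_k)}$, not of $e^{-z(\theta_t\omega_k)}$, but the latter follows immediately from \eqref{conv_z2} and continuity of $x\mapsto e^{-x}$.
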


In view of Lemma \ref{Soln}, we can define a mapping $\Psi:\R^+\times\R\times\Omega\times\H\to\H$ by
\begin{align}\label{Phi}
	\Psi(t,\tau,\omega,\u_{\tau})=\u(t+\tau,\tau,\theta_{-\tau}\omega,\u_{\tau})=e^{z(\theta_{t}\omega)}\v(t+\tau,\tau,\theta_{-\tau}\omega,\v_{\tau}).
\end{align}
The Lusin continuity in Proposition \ref{LusinC} gives the $\mathscr{F}$-measurability of $\Psi$. Consequently, the mapping $\Psi$ defined by \eqref{Phi} is an NRDS on $\H$. The following Proposition demonstrate the backward convergence of NRDS \eqref{Phi} which is adapted from \cite{KRM} (Proposition 3.6, \cite{KRM}).
\begin{proposition}\label{Back_conver}
	Suppose that Hypothesis \ref{Hypo_f-N} is satisfied. Then, the solution $\v$ of the system \eqref{CNSE-M} backward converges to the solution $\widetilde{\v}$ of the system \eqref{A-CNSE-M}, that is,
	\begin{align}
		\lim_{\tau\to -\infty}\|\v(T+\tau,\tau,\theta_{-\tau}\omega,\v_{\tau})-\widetilde{\v}(t,\omega,\widetilde{\v}_0)\|_{\H}=0, \ \ \text{ for all } \ T>0 \ \text{ and } \ \omega\in\Omega,
	\end{align}
whenever $\|\v_{\tau}-\widetilde{\v}_0\|_{\H}\to0$ as $\tau\to-\infty.$
\end{proposition}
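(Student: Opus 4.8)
The plan is to bound the $\H$-norm of the difference of the two pathwise solutions by a quantity that vanishes as $\tau\to-\infty$, after first normalizing the initial time. First I would set $\v^{\tau}(t):=\v(t+\tau,\tau,\theta_{-\tau}\omega,\v_{\tau})$ for $t\geq0$; since $\theta_{t}\theta_{-\tau}\omega=\theta_{t-\tau}\omega$, the function $\v^{\tau}$ solves \eqref{CNSE-M} on $[0,\infty)$ with the translated forcing $\f(\cdot+\tau)$, the coefficients $e^{\pm z(\theta_{t}\omega)}$ and $\sigma z(\theta_{t}\omega)$, and initial datum $\v^{\tau}(0)=\v_{\tau}$; whereas $\widetilde{\v}(\cdot,\omega,\widetilde{\v}_{0})$ solves \eqref{A-CNSE-M} on $[0,\infty)$ with forcing $\f_{\infty}e^{-z(\theta_{t}\omega)}$ and $\widetilde{\v}(0)=\widetilde{\v}_{0}$. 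By Lemma \ref{Soln} (applied to \eqref{A-CNSE-M} with the constant forcing $\f_{\infty}$) and the energy inequality \eqref{EI1}, both $\v^{\tau}$ and $\widetilde{\v}$ lie in $\mathrm{C}([0,T];\H)\cap\mathrm{L}^{2}(0,T;\V)$, so $\w:=\v^{\tau}-\widetilde{\v}$, which satisfies $\w(0)=\v_{\tau}-\widetilde{\v}_{0}$, is an admissible test function in the weak formulation of its own equation.

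Next I would take the $\H$-inner product of the equation for $\w$ with $\w$. The viscous term produces $\nu\|\w\|_{\V}^{2}$; by identity \eqref{441} the difference of the nonlinear terms becomes $e^{z(\theta_{t}\omega)}\langle\B(\v^{\tau})-\B(\widetilde{\v}),\w\rangle=e^{z(\theta_{t}\omega)}b(\w,\widetilde{\v},\w)$, which by \eqref{b1} and Young's inequality is bounded by $\tfrac{\nu}{2}\|\w\|_{\V}^{2}+C\nu^{-1}e^{2z(\theta_{t}\omega)}\|\widetilde{\v}\|_{\V}^{2}\|\w\|_{\H}^{2}$; the forcing term $e^{-z(\theta_{t}\omega)}(\f(\cdot+\tau)-\f_{\infty},\w)$ is bounded, using \eqref{poin} and Young's inequality, by $\tfrac{\nu}{4}\|\w\|_{\V}^{2}+(\nu\lambda)^{-1}e^{-2z(\theta_{t}\omega)}\|\f(\cdot+\tau)-\f_{\infty}\|_{\H}^{2}$; and the Ornstein--Uhlenbeck term contributes $\sigma z(\theta_{t}\omega)\|\w\|_{\H}^{2}$. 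Absorbing the $\|\w\|_{\V}^{2}$ contributions on the left leaves the differential inequality
\[ \frac{\d}{\d t}\|\w(t)\|_{\H}^{2}\leq\Theta(t)\|\w(t)\|_{\H}^{2}+\Xi_{\tau}(t), \]
with $\Theta(t)=2\sigma z(\theta_{t}\omega)+C\nu^{-1}e^{2z(\theta_{t}\omega)}\|\widetilde{\v}(t)\|_{\V}^{2}$ \emph{independent of $\tau$}, and $\Xi_{\tau}(t)=\tfrac{2}{\nu\lambda}e^{-2z(\theta_{t}\omega)}\|\f(t+\tau)-\f_{\infty}\|_{\H}^{2}$.

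Then Gronwall's inequality on $[0,T]$ gives
\[ \|\w(T)\|_{\H}^{2}\leq e^{\int_{0}^{T}\Theta(s)\,\d s}\Big(\|\v_{\tau}-\widetilde{\v}_{0}\|_{\H}^{2}+\int_{0}^{T}\Xi_{\tau}(s)\,\d s\Big), \]
and I would argue as follows. The factor $\int_{0}^{T}\Theta(s)\,\d s$ is a finite constant $C(T,\omega,\widetilde{\v}_{0})$, because $s\mapsto z(\theta_{s}\omega)$ is continuous on $[0,T]$ and $\widetilde{\v}\in\mathrm{L}^{2}(0,T;\V)$, and it does not depend on $\tau$. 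For the last term, I would bound $e^{-2z(\theta_{s}\omega)}$ by its maximum over $[0,T]$ and change variables $r=s+\tau$ to get
\[ \int_{0}^{T}\Xi_{\tau}(s)\,\d s\leq\frac{C(T,\omega)}{\nu\lambda}\int_{\tau}^{T+\tau}\|\f(r)-\f_{\infty}\|_{\H}^{2}\,\d r\leq\frac{C(T,\omega)}{\nu\lambda}\int_{-\infty}^{T+\tau}\|\f(r)-\f_{\infty}\|_{\H}^{2}\,\d r, \]
which tends to $0$ as $\tau\to-\infty$ by Hypothesis \ref{Hypo_f-N}, since $T+\tau\to-\infty$. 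Combined with the assumption $\|\v_{\tau}-\widetilde{\v}_{0}\|_{\H}\to0$, this yields $\|\w(T)\|_{\H}\to0$, which is exactly the claimed backward convergence.

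I expect the main obstacle to be the careful bookkeeping of the $\tau$-dependence rather than any single estimate: one must verify that the translation $t\mapsto t+\tau$ turns every Ornstein--Uhlenbeck coefficient into $z(\theta_{t}\omega)$ (so that the Gronwall multiplier $e^{\int_{0}^{T}\Theta}$ is bounded uniformly in $\tau$, using the $\tau$-independent $\mathrm{L}^2(0,T;\V)$-bound for $\widetilde{\v}$), and that the only residual $\tau$-dependence sits in $\w(0)$ and in the forcing integral, both of which vanish in the limit. The nonlinear estimate through identities \eqref{441} and \eqref{b1}, and the use of $\widetilde{\v}\in\mathrm{L}^{2}(0,T;\V)$, are otherwise routine and parallel the proof of Proposition 3.6 in \cite{KRM}.
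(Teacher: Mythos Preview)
Your argument is correct and is precisely the approach the paper intends: the paper does not give its own proof but refers to Proposition 3.6 in \cite{KRM}, and your proof reproduces that argument faithfully. The key observations---that after the time-shift the Ornstein--Uhlenbeck coefficients become $z(\theta_{t}\omega)$ (hence $\tau$-independent), that the nonlinear difference is controlled via \eqref{441}--\eqref{b1} against $\|\widetilde{\v}\|_{\V}^{2}\in\mathrm{L}^{1}(0,T)$, and that the only $\tau$-dependence left after Gronwall sits in $\|\v_{\tau}-\widetilde{\v}_{0}\|_{\H}$ and in $\int_{-\infty}^{T+\tau}\|\f-\f_{\infty}\|_{\H}^{2}$---are exactly what is needed.
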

Next Lemma is needed to obtain the increasing random absorbing set and the inequality \eqref{AB3} (see below) is  used to prove the backward uniform-tail estimates (Lemma \ref{largeradius}) and the backward flattening estimates (Lemma \ref{Flattening}).
\begin{lemma}\label{Absorbing}
	Suppose that $\f\in\mathrm{L}^2_{\mathrm{loc}}(\R;\H)$. Then, for each $(\tau,\omega,D)\in\R\times\Omega\times\mathfrak{D},$ there exists a time $\mathcal{T}:=\mathcal{T}(\tau,\omega,D)>0$ such that
	\begin{align}\label{AB1}
		&\sup_{s\leq \tau}\sup_{t\geq \mathcal{T}}\sup_{\v_{0}\in D(s-t,\theta_{-t}\omega)}\bigg[\|\v(s,s-t,\theta_{-s}\omega,\v_{0})\|^2_{\H}\nonumber\\&+\frac{\nu}{2}\int_{s-t}^{s}e^{\nu\lambda(\uprho-s)-2\sigma\int^{\uprho}_{s}z(\theta_{\upeta-s}\omega)\d\upeta}\|\v(\uprho,s-t,\theta_{-s}\omega,\v_{0})\|^2_{\V}\d\uprho\bigg]\leq 1+\frac{2}{\nu\lambda}\sup_{s\leq \tau}K(\tau,\omega),
	\end{align}
where $K(\tau,\omega)$ is given by
\begin{align}\label{AB2}
	K(\tau,\omega):=\int_{-\infty}^{0}e^{\nu\lambda\uprho+2|z(\theta_{\uprho}\omega)|+2\sigma\int_{\uprho}^{0}z(\theta_{\upeta}\omega)\d\upeta}\|\f(\uprho+s)\|^2_{\H}\d\uprho.
\end{align}
 Furthermore, for all $\xi> s-t,$ $ t\geq0$ and $\v_{0}\in\H$,
\begin{align}\label{AB3}
		&\|\v(\xi,s-t,\theta_{-s}\omega,\v_{0})\|^2_{\H}+\frac{\nu}{2}\int_{s-t}^{\xi}e^{\nu\lambda(\uprho-\xi)-2\sigma\int^{\uprho}_{\xi}z(\theta_{\upeta-s}\omega)\d\upeta}\|\v(\uprho,s-t,\theta_{-s}\omega,\v_{0})\|^2_{\V}\d\uprho\nonumber\\&\leq e^{-\nu\lambda(\xi-s+t)+2\sigma\int_{-t}^{\xi-s}z(\theta_{\upeta}\omega)\d\upeta}\|\v_{0}\|^2_{\H} + \frac{2}{\nu\lambda}\int\limits_{-t}^{\xi-s}e^{\nu\lambda(\uprho+s-\xi)+2|z(\theta_{\uprho}\omega)|+2\sigma\int_{\uprho}^{\xi-s}z(\theta_{\upeta}\omega)\d\upeta}\|\f(\uprho+s)\|^2_{\H}\d\uprho.
\end{align}
\end{lemma}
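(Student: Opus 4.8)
The plan is to obtain \eqref{AB3} directly by integrating the energy inequality \eqref{EI1} with an appropriate integrating factor, then to specialise to $\xi=s$ and absorb the initial-data contribution using the backward temperedness of $D$, and finally to verify that $\sup_{s\le\tau}K(s,\omega)$ is finite. First I would write \eqref{EI1} along the trajectory $\zeta\mapsto\v(\zeta,s-t,\theta_{-s}\omega,\v_0)$, i.e. replace $\omega$ by $\theta_{-s}\omega$ so that $z(\theta_\zeta\theta_{-s}\omega)=z(\theta_{\zeta-s}\omega)$, multiply by the positive integrating factor $e^{\nu\lambda\zeta-2\sigma\int_{s}^{\zeta}z(\theta_{\upeta-s}\omega)\d\upeta}$, and integrate over $(s-t,\xi)$, keeping the $\frac{\nu}{2}\|\v\|^2_{\V}$ term on the left and bounding the right-hand side of \eqref{EI1} by $\frac{2}{\nu\lambda}e^{2|z(\theta_{\zeta-s}\omega)|}\|\f(\zeta)\|^2_{\H}$. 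After the change of variables $\uprho\mapsto\uprho+s$ (and $\upeta\mapsto\upeta+s$) in the resulting integrals this yields exactly \eqref{AB3}, since $\int_{s-t}^{\xi}z(\theta_{\upeta-s}\omega)\d\upeta=\int_{-t}^{\xi-s}z(\theta_\upeta\omega)\d\upeta$ and similarly for the inner integrals.

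Next, putting $\xi=s$ in \eqref{AB3}, the forcing integral collapses to $\frac{2}{\nu\lambda}K(s,\omega)$ with $K$ as in \eqref{AB2}, and the prefactor of $\|\v_0\|^2_\H$ becomes $e^{-\nu\lambda t+2\sigma\int_{-t}^{0}z(\theta_\upeta\omega)\d\upeta}$. Using the sublinear growth \eqref{Z3} of the Ornstein--Uhlenbeck process, $\int_{-t}^{0}z(\theta_\upeta\omega)\d\upeta=o(t)$ as $t\to+\infty$ (one may also impose a largeness condition on $\sigma$ analogous to \eqref{Z6}), so this prefactor is $\le e^{-\frac{\nu\lambda}{3}t}$ for $t$ large. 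Since $\v_0\in D(s-t,\theta_{-t}\omega)$ with $D\in\mathfrak{D}$ backward tempered, the definition \eqref{BackTem} provides a time $\mathcal T=\mathcal T(\tau,\omega,D)$ such that $e^{-\nu\lambda t+2\sigma\int_{-t}^{0}z(\theta_\upeta\omega)\d\upeta}\sup_{s\le\tau}\|\v_0\|^2_{\H}\le e^{-\frac{\nu\lambda}{3}t}\sup_{s\le\tau}\|D(s-t,\theta_{-t}\omega)\|^2_{\H}\le 1$ for all $t\ge\mathcal T$. Taking successively the suprema over $\v_0\in D(s-t,\theta_{-t}\omega)$, over $t\ge\mathcal T$, and over $s\le\tau$ in \eqref{AB3} with $\xi=s$ then gives \eqref{AB1}, provided $\sup_{s\le\tau}K(s,\omega)<+\infty$.

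This last finiteness follows because the random factor $2|z(\theta_\uprho\omega)|+2\sigma\int_{\uprho}^{0}z(\theta_\upeta\omega)\d\upeta$ in the exponent of the integrand of $K$ is $o(|\uprho|)$ as $\uprho\to-\infty$ by \eqref{Z3}, so $e^{\nu\lambda\uprho+2|z(\theta_\uprho\omega)|+2\sigma\int_{\uprho}^{0}z(\theta_\upeta\omega)\d\upeta}\le e^{\frac{\nu\lambda}{2}\uprho}$ for $\uprho$ sufficiently negative; the uniformness condition \eqref{G3} on $\f$ then makes $\sup_{s\le\tau}\int_{-\infty}^{0}e^{\frac{\nu\lambda}{2}\uprho}\|\f(\uprho+s)\|^2_{\H}\d\uprho$ finite, which closes the argument. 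The same estimate \eqref{AB3} (with the factor $e^{\nu\lambda(\uprho-\xi)-2\sigma\int^{\uprho}_{\xi}z(\theta_{\upeta-s}\omega)\d\upeta}$ on the dissipation term) is recorded for general $\xi$ because it is what Lemmas \ref{largeradius} and \ref{Flattening} will use.

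The main obstacle is to control the \emph{signed}, random exponential weight $e^{\pm 2\sigma\int z}$ that originates from the multiplicative-noise term $\sigma z(\theta_t\omega)\|\v\|^2_{\H}$ in \eqref{EI1}: unlike a deterministic decay factor, this weight changes sign along trajectories, so one has to rely on the ergodic/sublinear properties \eqref{Z2}--\eqref{Z3} of $z(\theta_t\omega)$ to ensure that it grows strictly slower than the deterministic exponential $e^{\nu\lambda|\cdot|}$ --- both for absorbing the initial datum and for the uniform finiteness of $K(s,\omega)$ over $s\le\tau$. One must also keep straight that the forcing contribution carries the factor $e^{2|z|}$ (absolute value, from Young's inequality) while the integrating factor carries $e^{-2\sigma z}$ (signed), so that the two exponents only partially cancel.
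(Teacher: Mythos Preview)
Your proposal is correct and follows essentially the same route as the paper (the paper defers to \cite{KRM}, but the template is the proof of Lemma~\ref{Absorbing-N}): apply the integrating factor/variation-of-constants to the energy inequality \eqref{EI1} over $(s-t,\xi)$ to get \eqref{AB3}, set $\xi=s$, absorb the initial-data term via backward temperedness, and take suprema. One small correction: the parenthetical ``one may also impose a largeness condition on $\sigma$ analogous to \eqref{Z6}'' is misplaced here---in the multiplicative case the exponent carries the \emph{signed} $2\sigma\int_{-t}^{0}z(\theta_{\upeta}\omega)\d\upeta$, whose time-average vanishes by \eqref{Z3} (the ergodic theorem) for \emph{every} $\sigma>0$, so no choice of $\sigma$ is needed; the condition \eqref{Z6} is specific to the additive case where $|z|$ appears and has strictly positive mean.
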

\begin{proof}
	See the proof of Lemma 3.7 in \cite{KRM}.
\end{proof}
\begin{proposition}\label{IRAS}
	Suppose that $\f\in\mathrm{L}^2_{\mathrm{loc}}(\R;\H)$, Hypotheses \ref{assumpO} and \ref{Hypo_f-N} are satisfied. For $K(\tau,\omega)$ same as in \eqref{AB2}, we have
	\vskip 2mm
	\noindent
	\emph{(i)} There is an increasing $\mathfrak{D}$-pullback absorbing set $\mathcal{K}$ given by
	\begin{align}\label{IRAS1}
	\mathcal{K}(\tau,\omega):=\left\{\u\in\H:\|\u\|^2_{\H}\leq e^{z(\omega)}\left[1+\frac{2}{\nu\lambda}\sup_{s\leq \tau}K(\tau,\omega)\right]\right\}, \ \text{ for all } \ \tau\in\R,
\end{align}
	Moreover, $\mathcal{K}$ is backward-uniformly tempered with arbitrary rate, that is, $\mathcal{K}\in{\mathfrak{D}}$.
	\vskip 2mm
	\noindent
	\emph{(ii)} There is a $\mathfrak{B}$-pullback \textbf{random} absorbing set $\widetilde{\mathcal{K}}$ given by
	\begin{align}\label{IRAS11}
		\widetilde{\mathcal{K}}(\tau,\omega):=\left\{\u\in\H:\|\u\|^2_{\H}\leq e^{z(\omega)}\left[1+\frac{2}{\nu\lambda}K(\tau,\omega)\right]\right\}, \ \text{ for all } \ \tau\in\R.
	\end{align}
\end{proposition}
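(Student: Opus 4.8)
The plan is to follow verbatim the structure of the proof of Proposition \ref{IRAS-N}, substituting the multiplicative-noise estimates of Lemma \ref{Absorbing} for the additive ones and using the transformation $\u=e^{z(\theta_{t}\omega)}\v$ (equation \eqref{Phi}) in place of the shift $\u=\v+\h z$. In contrast to the additive case, no largeness assumption on $\sigma$ is needed here, since $\E(z(\theta_{\upeta}\omega))=0$ so the drift $2\sigma\int_{\uprho}^{0}z(\theta_{\upeta}\omega)\d\upeta$ grows sublinearly in $|\uprho|$ by \eqref{Z3} for every $\sigma>0$.

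\emph{Finiteness of the radius.} The first step is to verify $\sup_{s\leq\tau}K(s,\omega)<\infty$ for every $(\tau,\omega)\in\R\times\Omega$. In \eqref{AB2} the only factor threatening integrability as $\uprho\to-\infty$ is $e^{2|z(\theta_{\uprho}\omega)|+2\sigma\int_{\uprho}^{0}z(\theta_{\upeta}\omega)\d\upeta}$; by the sub-exponential behaviour of $z$ recorded in \eqref{Z3} and \eqref{Z5}, for each $\varepsilon>0$ there is a random constant $C_{\varepsilon}(\omega)>0$ with
\begin{align*}
2|z(\theta_{\uprho}\omega)|+2\sigma\Big|\int_{\uprho}^{0}z(\theta_{\upeta}\omega)\d\upeta\Big|\leq C_{\varepsilon}(\omega)+\varepsilon|\uprho|,\qquad\uprho\leq0.
\end{align*}
Choosing $\varepsilon=\nu\lambda/2$ bounds $K(s,\omega)$ by $e^{C_{\varepsilon}(\omega)}\int_{-\infty}^{0}e^{\frac{\nu\lambda}{2}\uprho}\|\f(\uprho+s)\|_{\H}^{2}\d\uprho$, which is finite and uniformly bounded over $s\leq\tau$ by the uniformness condition \eqref{G3} (a consequence of Hypothesis \ref{Hypo_f-N}).

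\emph{Part (i).} Given $(\tau,\omega,D)\in\R\times\Omega\times\mathfrak{D}$, Lemma \ref{Absorbing} provides a time $\mathcal{T}$ after which $\|\v(s,s-t,\theta_{-s}\omega,\v_{0})\|_{\H}^{2}\leq 1+\tfrac{2}{\nu\lambda}\sup_{s\leq\tau}K(s,\omega)$ for all $s\leq\tau$, $t\geq\mathcal{T}$ and $\v_{0}\in D(s-t,\theta_{-t}\omega)$; passing back from $\v$ to $\u$ via \eqref{Phi} shows that $\mathcal{K}$ in \eqref{IRAS1} absorbs $D$, and monotonicity of $\tau\mapsto\sup_{s\leq\tau}K(s,\omega)$ makes $\mathcal{K}$ increasing. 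To prove $\mathcal{K}\in\mathfrak{D}$, fix $c>0$, put $c_{1}=\min\{c/2,\nu\lambda/4\}$ and estimate
\begin{align*}
e^{-ct}\sup_{s\leq\tau}\|\mathcal{K}(s-t,\theta_{-t}\omega)\|_{\H}^{2}\leq e^{-ct}e^{z(\theta_{-t}\omega)}\Big[1+\tfrac{2}{\nu\lambda}\sup_{s\leq\tau}K(s-t,\theta_{-t}\omega)\Big];
\end{align*}
a change of variables in the integral defining $K$, together with \eqref{G3} and the sublinearity of $z$ from \eqref{Z3}, reproduces an overall factor $e^{-(c-c_{1})t}$ times a finite quantity, while $e^{-ct}e^{z(\theta_{-t}\omega)}\to0$ follows from \eqref{Z5}; hence the limit is $0$ — the computation is identical in spirit to the chain \eqref{IRAS3-N}.

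\emph{Part (ii) and main obstacle.} Since $\widetilde{\mathcal{K}}(\tau,\omega)\subseteq\mathcal{K}(\tau,\omega)$ and $\mathcal{K}\in\mathfrak{D}\subseteq\mathfrak{B}$, inclusion-closedness gives $\widetilde{\mathcal{K}}\in\mathfrak{B}$; taking $\xi=s=\tau$ in \eqref{AB3} shows that $\widetilde{\mathcal{K}}$ in \eqref{IRAS11} absorbs every $B\in\mathfrak{B}$, and since $\omega\mapsto K(\tau,\omega)$ and $\omega\mapsto z(\omega)$ are $\mathscr{F}$-measurable, $\widetilde{\mathcal{K}}$ is a genuine random set, hence a $\mathfrak{B}$-pullback random absorbing set. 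The only delicate point throughout is the exponential bookkeeping in Part (i): controlling $e^{\nu\lambda\uprho+2|z(\theta_{\uprho}\omega)|+2\sigma\int_{\uprho}^{0}z\,\d\upeta}$ against $e^{\nu\lambda\uprho}$ uniformly in $s\leq\tau$, and preserving this control after the pullback substitution $(\omega,s)\mapsto(\theta_{-t}\omega,s-t)$; this forces a careful joint use of \eqref{G3}, \eqref{Z3} and \eqref{Z5}, while everything else reduces directly to Lemma \ref{Absorbing}.
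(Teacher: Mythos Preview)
Your proposal is correct and follows essentially the same route as the paper's own proof: both verify finiteness of $\sup_{s\leq\tau}K(s,\omega)$ via \eqref{G3}, \eqref{Z3}, \eqref{Z5}, invoke Lemma \ref{Absorbing} for absorption, and then repeat the chain \eqref{IRAS3-N} (with the multiplicative exponents) to get $\mathcal{K}\in\mathfrak{D}$, while part (ii) is the same inclusion-and-measurability argument. Your additional remarks --- the $\varepsilon$-argument for the sub-exponential growth and the observation that no largeness of $\sigma$ is needed because the drift $2\sigma\int z\,\d\upeta$ has zero mean --- are helpful elaborations but do not alter the underlying approach.
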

\begin{proof}
	(i) Using \eqref{G3}, \eqref{Z5} and \eqref{Z3}, we obtain
	\begin{align}\label{IRAS2}
		\sup_{s\leq \tau}K(s,\omega)&= \sup_{s\leq \tau}\int_{-\infty}^{0}e^{\nu\lambda\uprho+2|z(\theta_{\uprho}\omega)|+2\sigma\int_{\uprho}^{0}z(\theta_{\upeta}\omega)\d\upeta}\|\f(\uprho+s)\|^2_{\H}\d\uprho<\infty.
	\end{align}
	Hence, absorption follows from Lemma \ref{Absorbing}. Due to the fact that $\tau\mapsto \sup_{s\leq\tau}K(\tau,\omega)$ is an increasing function, $\mathcal{K}(\tau,\omega)$ is an increasing $\mathfrak{D}$-pullback absorbing set. Using similar arguments as in \eqref{IRAS3-N}, with the help of \eqref{G3}, \eqref{Z5} and \eqref{Z3}, we deduce
	\begin{align}
		&\lim_{t\to+\infty}e^{-ct}\sup_{s\leq \tau}\|\mathcal{K}(s-t,\theta_{-t}\omega)\|^2_{\H}=0,
	\end{align}
	which gives $\mathcal{K}\in{\mathfrak{D}}$.
	\vskip 2mm
	\noindent
	(ii) Since $\widetilde{\mathcal{K}}\subseteq\mathcal{K}\in\mathfrak{D}\subseteq\mathfrak{B}$ and the mapping $\omega\mapsto K(\tau,\omega)$ is $\mathscr{F}$-measurable,  $\widetilde{\mathcal{R}}$ is a $\mathfrak{B}$-pullback \textbf{random} absorbing set.
\end{proof}

\subsection{Backward uniform-tail estimates and backward flattening estimates}
In this subsection, we prove the backward tail-estimates and backward flattening estimates for the solution of \eqref{2-M}. These estimates help us to prove the time-semi-uniform asymptotic compactness of the solution of \eqref{CNSE-M}. We obtain these estimates by using an appropriate cut-off function. The following Lemma provides the backward tail-estimates for the solution of the system \eqref{2-M}.
\begin{lemma}\label{largeradius}
	Suppose that Hypothesis \ref{Hypo_f-N} is satisfied. Then, for any $(\tau,\omega,D)\in\R\times\Omega\times\mathfrak{D},$ the solution of \eqref{2-M}  satisfies
	\begin{align}\label{ep}
		&\lim_{k,t\to+\infty}\sup_{s\leq \tau}\sup_{\v_{0}\in D(s-t,\theta_{-t}\omega)}\|\v(s,s-t,\theta_{-s}\omega,\v_{0})\|^2_{\mathrm{L}^2(\mathcal{O}^{c}_{k})}=0,
	\end{align}
	where $\mathcal{O}_{k}=\{x\in\mathcal{O}:|x|\leq k\},$ $k\in\mathbb{N}$.
\end{lemma}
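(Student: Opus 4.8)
The plan is to mirror the proof of Lemma~\ref{largeradius-N}, with the stochastic factors $e^{\pm z(\theta_{t}\omega)}$ now playing the role that the powers of $z(\theta_{t}\omega)$ played in the additive case. I would fix the same cut-off $\uprho$ as in \eqref{337} (so $\uprho\equiv 0$ on $[0,1]$, $\uprho\equiv 1$ on $[2,\infty)$, $|\uprho'|,|\uprho''|\leq C$) and first derive a rigorous expression for the pressure in \eqref{2-M}: taking the divergence of the first equation and using $\nabla\cdot\v=0$ together with the divergence-freeness of the projected forcing gives
\begin{align*}
	-\Delta p=e^{2z(\theta_{t}\omega)}\sum_{i,j=1}^{2}\frac{\partial^{2}(v_iv_j)}{\partial x_i\partial x_j},\qquad\text{so}\qquad p=e^{2z(\theta_{t}\omega)}(-\Delta)^{-1}\big[\Tr(\nabla\v)^{2}\big]
\end{align*}
in the weak sense, whence the elliptic regularity on $\mathcal{O}$ used in \eqref{p-value-N} (cf. Lemmas~1, \cite{Heywood}) yields $\|p\|_{\L^{2}(\mathcal{O})}\leq Ce^{2z(\theta_{t}\omega)}\|\v\|^{2}_{\L^{4}(\mathcal{O})}$.

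Next I would take the $\L^{2}(\mathcal{O})$-inner product of the first equation of \eqref{2-M} with $\uprho^{2}\left(\frac{|x|^{2}}{k^{2}}\right)\v$ and estimate the resulting terms as in \eqref{ep1-N}. The viscous term is handled by integration by parts exactly as in \eqref{ep2-N}. The convective term $e^{z(\theta_{t}\omega)}b\big(\v,\v,\uprho^{2}\v\big)$ reduces, after integration by parts and $\nabla\cdot\v=0$, to a term of the form $\frac{C}{k^{2}}e^{z(\theta_{t}\omega)}\int_{\mathcal{O}}\uprho\uprho'(x\cdot\v)|\v|^{2}\d x$, which is supported in $\{k\leq|x|\leq\sqrt{2}k\}$ and, by the Gagliardo--Nirenberg and Young inequalities, is bounded by $\frac{C}{k}\big[e^{2z(\theta_{t}\omega)}\|\v\|^{4}_{\H}+\|\v\|^{2}_{\V}\big]$; this is simpler than \eqref{ep3-N} since no $\h z$ terms occur. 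The pressure term $-e^{-z(\theta_{t}\omega)}\int_{\mathcal{O}}(\nabla p)\uprho^{2}\v\,\d x$ becomes, after integration by parts, $\frac{C}{k^{2}}e^{-z(\theta_{t}\omega)}\int_{\mathcal{O}}\uprho\uprho'(x\cdot\v)\,p\,\d x$, which is bounded by $\frac{C}{k}e^{-z(\theta_{t}\omega)}\|p\|_{\L^{2}(\mathcal{O})}\|\v\|_{\H}\leq\frac{C}{k}e^{z(\theta_{t}\omega)}\|\v\|^{2}_{\L^{4}(\mathcal{O})}\|\v\|_{\H}\leq\frac{C}{k}\big[e^{2z(\theta_{t}\omega)}\|\v\|^{4}_{\H}+\|\v\|^{2}_{\V}\big]$, using the pressure bound above, the $2$-D interpolation $\|\v\|^{2}_{\L^{4}(\mathcal{O})}\leq C\|\v\|_{\H}\|\v\|_{\V}$ and Young's inequality. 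The forcing term $e^{-z(\theta_{t}\omega)}(\uprho^{2}\f,\v)$ and the linear term $\sigma z(\theta_{t}\omega)\|\uprho\v\|^{2}_{\H}$ are treated as in the proof of \eqref{EI1}, the latter combining into the dissipation coefficient. Collecting the estimates and using \eqref{poin} gives the analogue of \eqref{ep5-N},
\begin{align*}
	\frac{\d}{\d t}\|\v\|^{2}_{\L^{2}(\mathcal{O}^{c}_{k})}+\big(\nu\lambda-2\sigma z(\theta_{t}\omega)\big)\|\v\|^{2}_{\L^{2}(\mathcal{O}^{c}_{k})}\leq\frac{C}{k}\big[e^{2z(\theta_{t}\omega)}\|\v\|^{4}_{\H}+\|\v\|^{2}_{\V}\big]+Ce^{-2z(\theta_{t}\omega)}\int_{\mathcal{O}\cap\{|x|\geq k\}}|\f(x)|^{2}\d x.
\end{align*}

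Finally I would apply the variation-of-constants formula on $(s-t,s)$ and replace $\omega$ by $\theta_{-s}\omega$, as in \eqref{ep6-N}. The datum term carries $e^{-\nu\lambda t+2\sigma\int_{-t}^{0}z(\theta_{\upeta}\omega)\d\upeta}\|\v_{0}\|^{2}_{\H}$ with $\v_{0}\in D(s-t,\theta_{-t}\omega)$, and by \eqref{Z3} and the backward-temperedness \eqref{BackTem} of $D$ it tends to $0$ as $t\to+\infty$, uniformly for $s\leq\tau$. For the $\frac{C}{k}\|\v\|^{2}_{\V}$ contribution, and --- after substituting the absorbing estimate \eqref{AB3} into itself, exactly as in the passage \eqref{ep6-N}--\eqref{ep7-N} --- for the $\frac{C}{k}e^{2z(\theta_{t}\omega)}\|\v\|^{4}_{\H}$ contribution, Lemma~\ref{Absorbing} (namely \eqref{AB1} and \eqref{AB3}) together with \eqref{G3}, \eqref{IRAS2} and \eqref{Z3} give bounds that are finite, uniform in $s\leq\tau$ and independent of $k$, so these vanish as $k\to+\infty$. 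The forcing-tail term is made small by the backward-uniform tail smallness \eqref{f3-N} of $\f$ (a consequence of Hypothesis~\ref{Hypo_f-N}), after absorbing the random weight $e^{-2z(\theta_{t}\omega)}$ into a slightly smaller exponential by \eqref{Z3}; letting first $k\to+\infty$ and then $t\to+\infty$ yields \eqref{ep}. The main obstacle, exactly as in the additive case, is the rigorous treatment of the pressure: one must produce the representation of $p$ and the $\L^{4}(\mathcal{O})$-bound via elliptic regularity on the unbounded Poincar\'e domain, and then dispose of the resulting $\|\v\|^{4}_{\H}$ term by feeding \eqref{AB3} into itself while keeping the factors $e^{\pm 2z(\theta_{t}\omega)}$ integrable against $e^{\nu\lambda\uprho}$ via \eqref{Z2}--\eqref{Z3}; every remaining step is a routine adaptation of the proof of Lemma~\ref{largeradius-N}.
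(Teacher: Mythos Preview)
Your proposal is correct and follows essentially the same route as the paper's own proof: the same cut-off, the same pressure representation via $(-\Delta)^{-1}$ with the elliptic-regularity bound $\|p\|_{\L^{2}(\mathcal{O})}\leq Ce^{2z(\theta_{t}\omega)}\|\v\|^{2}_{\L^{4}(\mathcal{O})}$, the same differential inequality for $\|\v\|^{2}_{\L^{2}(\mathcal{O}^{c}_{k})}$, variation of constants on $(s-t,s)$, and disposal of the $\|\v\|^{4}_{\H}$ term by feeding \eqref{AB3} into itself exactly as in \eqref{ep7-N}. The only cosmetic difference is that the paper bounds the stochastic factors by $e^{2|z(\theta_{t}\omega)|}$ throughout (majorising both $e^{\pm 2z}$), whereas you keep the signed exponents $e^{2z}$ and $e^{-2z}$; either choice works under \eqref{Z3}.
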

\begin{proof}
	Let $\uprho$ be the  smooth function same as defined in \eqref{337}. Similar to \eqref{p-value} and \eqref{p-value-N}, we obtain   from \eqref{2-M}
	\begin{align}\label{p-value-M}
		p=(-\Delta)^{-1}\left[e^{2z(\theta_{t}\omega)}\sum_{i,j=1}^{2}\frac{\partial^2}{\partial x_i\partial x_j}\big(v_iv_j\big)\right],
	\end{align}
	in the weak sense and
	\begin{align}
		\|p\|^2_{\mathrm{L}^2(\mathcal{O})}\leq C e^{4z(\theta_{t}\omega)} \|\v\|^4_{\L^4(\mathcal{O})}\label{p-value-N-M}.
	\end{align}
Taking the inner product to the first equation of \eqref{2-M} with $\uprho^2\left(\frac{|x|^2}{k^2}\right)\v$, we have
	\begin{align}\label{ep1}
		\frac{1}{2} \frac{\d}{\d t} \int_{\mathcal{O}}\uprho^2\left(\frac{|x|^2}{k^2}\right)|\v|^2\d x&= \underbrace{\nu \int_{\mathcal{O}}(\Delta\v) \uprho^2\left(\frac{|x|^2}{k^2}\right) \v \d x}_{E_1(k,t)}-\underbrace{e^{z(\theta_{t}\omega)}b\left(\v,\v,\uprho^2\left(\frac{|x|^2}{k^2}\right)\v\right)}_{E_2(k,t)}\nonumber\\&\quad-\underbrace{e^{-z(\theta_{t}\omega)}\int_{\mathcal{O}}(\nabla p)\uprho^2\left(\frac{|x|^2}{k^2}\right)\v\d x}_{E_3(k,t)}+ \underbrace{e^{-z(\theta_{t}\omega)} \int_{\mathcal{O}}\f\uprho^2\left(\frac{|x|^2}{k^2}\right)\v\d x}_{E_4(k,t)} \nonumber\\&\quad+\sigma z(\theta_{t}\omega)\int_{\mathcal{O}}\uprho^2\left(\frac{|x|^2}{k^2}\right)|\v|^2\d x.
	\end{align}
	We estimate each term on the right hand side of \eqref{ep1}. Integration by parts, divergence free condition of $\v(\cdot)$,  \eqref{poin}, \eqref{p-value-N-M}, Gagliardo-Nirenberg's and Young's inequalities provide (see \eqref{ep2-N}-\eqref{ep4-N} for detailed calculations)
	\begin{align}
		E_1(k,t)&\leq-\frac{3\nu\lambda}{4} \int_{\mathcal{O}}\left|\left(\uprho\left(\frac{|x|^2}{k^2}\right) \v\right)\right|^2\d x+\frac{C}{k}\|\v\|^2_{\V},
\\
		\left|E_2(k,t)\right|&\leq\frac{C}{k}\left[e^{2\left|z(\theta_{t}\omega)\right|}\|\v\|^4_{\H}+\|\v\|^2_{\V}\right] ,
 \\
		\left|E_3(k,t)\right|&\leq \frac{C}{k}\bigg[e^{2\left|z(\theta_{t}\omega)\right|}\|\v\|^4_{\H}+\|\v\|^2_{\V}\bigg],\\
		\left|E_4(k,t)\right|&\leq \frac{\nu\lambda}{4} \int_{\mathcal{O}}\uprho\left(\frac{|x|^2}{k^2}\right)|\v|^2\d x +\frac{e^{2\left|z(\theta_{t}\omega)\right|}}{\nu\lambda} \int_{\mathcal{O}}\uprho\left(\frac{|x|^2}{k^2}\right)|\f(x)|^2\d x.\label{ep4}
	\end{align}
	Combining \eqref{ep1}-\eqref{ep4}, we get
	\begin{align}\label{ep5}
		&	\frac{\d}{\d t} \|\v\|^2_{\mathbb{L}^2(\mathcal{O}_k^{c})}+ \left(\nu\lambda-2\sigma z(\theta_{t}\omega)\right) \|\v\|^2_{\mathbb{L}^2(\mathcal{O}_k^c)} \nonumber\\ &\leq\frac{C}{k} \left[e^{2\left|z(\theta_{t}\omega)\right|}\|\v\|^4_{\H}+\|\v\|^2_{\V}\right]+\frac{2e^{2\left|z(\theta_{t}\omega)\right|}}{\nu\lambda} \int_{\mathcal{O}\cap\{|x|\geq k\}}|\f(x)|^2\d x.
	\end{align}
	Applying variation of constants formula to the equation \eqref{ep5} on $(s-t,s)$ and replacing $\omega$ by $\theta_{-s}\omega$, we find  for $s\leq\tau, t\geq 0$ and $\omega\in\Omega$,
	\begin{align}\label{ep6}
		&\|\v(s,s-t,\theta_{-s}\omega,\v_{0})\|^2_{\mathbb{L}^2(\mathcal{O}_k^{c})} \nonumber\\& \leq e^{-\alpha t+2\sigma\int_{-t}^{0}z(\theta_{\upeta}\omega)\d\upeta}\|\v_{0}\|^2_{\H}+\frac{C}{k}\bigg[\underbrace{\int_{s-t}^{s}e^{\nu\lambda(\uprho-s)-2\sigma\int^{\uprho}_{s}z(\theta_{\upeta-s}\omega)\d\upeta}\|\v(\uprho,s-t,\theta_{-s}\omega,\v_{0})\|^2_{\V}\d\uprho}_{:=\widehat{E}_1(t)}\nonumber\\&\quad+\underbrace{\int_{s-t}^{s}e^{2\left|z(\theta_{\uprho-s}\omega)\right|+\nu\lambda(\uprho-s)-2\sigma\int^{\uprho}_{s}z(\theta_{\upeta-s}\omega)\d\upeta}\|\v(\uprho,s-t,\theta_{-s}\omega,\v_{0})\|^{4}_{\H}\d\uprho}_{:=\widehat{E}_2(t)}\bigg]\nonumber\\&\quad+\underbrace{C\int_{s-t}^{s}e^{2|z(\theta_{\uprho-s}\omega)|+\nu\lambda(\uprho-s)-2\sigma\int^{\uprho}_{s}z(\theta_{\upeta-s}\omega)\d\upeta} \int_{\mathcal{O}\cap\{|x|\geq k\}}|\f(x,\xi)|^2\d x\d\uprho}_{:=\widehat{E}_3(k,t)}.
	\end{align}
From \eqref{AB3}, we deduce
\begin{align}\label{ep7}
	\widehat{E}_2(t)&\leq C\int_{s-t}^{s}e^{2\left|z(\theta_{\uprho-s}\omega)\right|+\nu\lambda(\uprho-s)+2\sigma\int_{\uprho-s}^{0}z(\theta_{\upeta}\omega)\d\upeta}\bigg[e^{-2\nu\lambda(\uprho-s+t)+4\sigma\int_{-t}^{\uprho-s}z(\theta_{\upeta}\omega)\d\upeta}\|\v_{0}\|^4_{\H} \nonumber\\&\qquad+ \bigg(\int\limits_{-t}^{\uprho-s}e^{\nu\lambda(\uprho_1+s-\uprho)+2|z(\theta_{\uprho_1}\omega)|+2\sigma\int_{\uprho_1}^{\uprho-s}z(\theta_{\upeta}\omega)\d\upeta}\|\f(\uprho_1+s)\|^2_{\H}\d\uprho_1\bigg)^2\bigg]\d\uprho\nonumber\\&\leq C\int_{-\infty}^{0}e^{2\left|z(\theta_{\uprho}\omega)\right|+\frac{\nu\lambda}{4}\uprho-2\sigma\int_{\uprho}^{0}z(\theta_{\upeta}\omega)\d\upeta}\d\uprho\cdot e^{-\frac{3\nu\lambda}{4}t+4\sigma\int_{-t}^{0}z(\theta_{\upeta}\omega)\d\upeta}\|\v_{0}\|^4_{\H}\nonumber\\&\qquad+ C\int_{-\infty}^{0}e^{2\left|z(\theta_{\uprho}\omega)\right|+\frac{\nu\lambda}{3}\uprho-2\sigma\int_{\uprho}^{0}z(\theta_{\upeta}\omega)\d\upeta}\d\uprho\nonumber\\&\qquad\times\bigg(\int_{-\infty}^{0}e^{\frac{\nu\lambda}{3}\uprho_1+2|z(\theta_{\uprho_1}\omega)|+2\sigma\int_{\uprho_1}^{0}z(\theta_{\upeta}\omega)\d\upeta}\|\f(\uprho_1+s)\|^2_{\H}\d\uprho_1\bigg)^2\nonumber\\&:=\widehat{E}_{21}(t)+\widehat{E}_{22}(t).
\end{align}
Combining \eqref{ep6} and \eqref{ep7}, we arrive at
\begin{align}
	&\|\v(s,s-t,\theta_{-s}\omega,\v_{0})\|^2_{\mathbb{L}^2(\mathcal{O}_k^{c})} \nonumber\\& \leq e^{-\alpha t+2\sigma\int_{-t}^{0}z(\theta_{\upeta}\omega)\d\upeta}\|\v_{0}\|^2_{\H}+\frac{C}{k}\bigg[\widehat{E}_1(t)+\widehat{E}_{21}(t)+\widehat{E}_{22}(t)\bigg]+\widehat{E}_{3}(k,t).
\end{align}
	Now using \eqref{f3-N}, \eqref{Z3}, the definition of backward temperedness \eqref{BackTem} and Lemma \ref{Absorbing}, one can immediately complete the proof.
\end{proof}

The following Lemma provides the backward flattening estimates for the solution of the system \eqref{2-M}.

\begin{lemma}\label{Flattening}
	Suppose that Hypothesis \ref{Hypo_f-N} is satisfied. Let $(\tau,\omega,D)\in\R\times\Omega\times\mathfrak{D}$, $k\geq1$ be fixed, $\varrho_k$ is given by \eqref{varrho_k} and $\mathrm{P}_i$ is the same as in \eqref{DirectProd}. Then
	\begin{align}\label{FL-P-M}
		\lim_{i,t\to+\infty}\sup_{s\leq \tau}\sup_{\v_{0}\in D(s-t,\theta_{-t}\omega)}\|(\I-\P_{i})\bar{\v}(s,s-t,\theta_{-s},\bar{\v}_{0,2})\|^2_{\L^2(\mathcal{O}_{2k})}=0,
	\end{align}
	where $\bar{\v}=\varrho_k\v$ and $\bar{\v}_{0,2}=(\I-\P_{i})(\varrho_k\v_{0})$.
\end{lemma}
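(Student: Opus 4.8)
The plan is to run, in the multiplicative setting, the same scheme that proves Lemma \ref{Flattening-N}: the energy estimates \eqref{AB1} and \eqref{AB3} of Lemma \ref{Absorbing} play the role of \eqref{AB1-N}--\eqref{AB2-N}, the pressure identity and bound \eqref{p-value-M}--\eqref{p-value-N-M} replace \eqref{p-value}--\eqref{p-value-N}, and, since $S(\u)=\u$ involves no fixed function $\h$, the $\h$-dependent summands (the term $J_2$ in \eqref{FL2} and the coefficients $\I_1,\I_2$ in \eqref{FL7}) simply do not occur. Note that $\varrho_k$ is supported in $\mathcal{O}_{\sqrt2 k}\subseteq\mathcal{O}_{2k}$, so $\bar\v=\varrho_k\v\in\H^1_0(\mathcal{O}_{2k})$ and the decomposition together with the inequalities \eqref{DirectProd}--\eqref{poin-i} are available on $\mathcal{O}_{2k}$.

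First I would multiply the first equation of \eqref{2-M} by $\varrho_k$, use the identity $\varrho_k\Delta\v=\Delta\bar\v-\v\Delta\varrho_k-2\nabla\varrho_k\cdot\nabla\v$ to obtain the equation for $\bar\v$, apply $\I-\P_i$, and take the inner product with $\bar\v_{i,2}=(\I-\P_i)\bar\v$ in $\L^2(\mathcal{O}_{2k})$. Invoking \eqref{poin-i} this produces an identity of the form
\begin{align*}
\frac12\frac{\d}{\d t}\|\bar\v_{i,2}\|^2_{\L^2(\mathcal{O}_{2k})}+\nu\|\nabla\bar\v_{i,2}\|^2_{\L^2(\mathcal{O}_{2k})}=-J_1+\sigma z(\theta_t\omega)\|\bar\v_{i,2}\|^2_{\L^2(\mathcal{O}_{2k})}-J_3-J_4,
\end{align*}
where $J_1$ arises from $e^{z(\theta_t\omega)}\varrho_k(\v\cdot\nabla)\v$, $J_3$ gathers the commutator terms $-\nu\v\Delta\varrho_k-2\nu\nabla\varrho_k\cdot\nabla\v$ and the forcing $e^{-z(\theta_t\omega)}\varrho_k\f$, and $J_4=e^{-z(\theta_t\omega)}(\varrho_k\nabla p,\bar\v_{i,2})$.

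Next I would estimate these as in \eqref{FL3}, \eqref{FL5} and \eqref{FL6}. For $J_1$, integration by parts together with $\nabla\cdot\v=0$ leaves only the cut-off derivative $\uprho'$ acting on the annulus, so the two-dimensional Gagliardo--Nirenberg inequality, the bound $\|\bar\v_{i,2}\|_{\L^2}\le\lambda_{i+1}^{-1/2}\|\nabla\bar\v_{i,2}\|_{\L^2}$ from \eqref{poin-i}, and Young's inequality yield $\frac{\nu}{8}\|\nabla\bar\v_{i,2}\|^2$ plus a term $C\lambda_{i+1}^{-\alpha}e^{c|z(\theta_t\omega)|}\big(\|\v\|^8_\H+\|\v\|^2_\V+|z(\theta_t\omega)|^8+1\big)$. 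For $J_4$, I would replace $\varrho_k^2\nabla p\cdot\v$ by $-p\,\nabla(\varrho_k^2)\cdot\v$ using incompressibility and estimate $\|p\|_{\L^2(\mathcal{O})}\le Ce^{2|z(\theta_t\omega)|}\|\v\|^2_{\L^4(\mathcal{O})}\le Ce^{2|z(\theta_t\omega)|}\|\v\|_\H\|\v\|_\V$ via \eqref{p-value-N-M}, again gaining $\lambda_{i+1}^{-1/4}$ on $\|\bar\v_{i,2}\|_{\L^2}$. The linear term $J_3$ is treated exactly as in \eqref{FL5}, and the $\sigma z$-term is absorbed using \eqref{poin}, as in the passage to \eqref{FL7}. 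Collecting everything yields
\begin{align*}
\frac{\d}{\d t}\|\bar\v_{i,2}\|^2_{\L^2(\mathcal{O}_{2k})}+\big(\nu\lambda-2\sigma z(\theta_t\omega)\big)\|\bar\v_{i,2}\|^2_{\L^2(\mathcal{O}_{2k})}\le\sum_{j}\I_j(i)\,\Theta_j(t),
\end{align*}
where every $\I_j(i)$ is a finite sum of negative powers of $\lambda_{i+1}$ and hence tends to $0$ as $i\to+\infty$, while each $\Theta_j(t)$ is a combination of $e^{c|z(\theta_t\omega)|}\|\v\|^8_\H$, $\|\v\|^2_\V$, $\|\f(t)\|^2_\H$, $|z(\theta_t\omega)|^m$ and constants.

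Finally I would apply the variation of constants formula on $(s-t,s)$, replace $\omega$ by $\theta_{-s}\omega$, and show each resulting integral is bounded uniformly in $s\le\tau$ for large $t$: the $\|\v\|^2_\V$- and $\|\f\|^2_\H$-integrals by \eqref{AB1} and \eqref{G3} (as for $L_1,L_6$ in \eqref{FL9}), the $z$-integrals by \eqref{Z3} and \eqref{Z5}, and the $\|\v\|^8_\H$-integral by inserting the pointwise estimate \eqref{AB3} and arguing as for $\widehat E_2$ in \eqref{ep7} (equivalently as for $L_2$ in \eqref{FL10}). The initial-data term $e^{-\nu\lambda t+2\sigma\int_{-t}^0 z(\theta_\upeta\omega)\d\upeta}\|(\I-\P_i)(\varrho_k\v_0)\|^2_{\L^2(\mathcal{O}_{2k})}\le Ce^{-\nu\lambda t+2\sigma\int_{-t}^0 z(\theta_\upeta\omega)\d\upeta}\|\v_0\|^2_\H$ goes to $0$ uniformly over $s\le\tau$ and $\v_0\in D(s-t,\theta_{-t}\omega)$ by backward temperedness \eqref{BackTem} and \eqref{Z3}. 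Sending $t\to+\infty$ and then $i\to+\infty$ (with $\I_j(i)\to0$) gives \eqref{FL-P-M}. The step I expect to be the main obstacle is the rigorous treatment of the pressure term $J_4$: one has to pass legitimately from $\varrho_k\nabla p$ to $p$ alone via incompressibility, control $\|p\|_{\L^2(\mathcal{O})}$ through $\|\v\|_{\L^4(\mathcal{O})}$ using \eqref{p-value-N-M}, and then split the resulting high power of $\|\v\|$ between an integrable $\|\v\|^2_\V$ piece and an $\|\v\|^8_\H$ piece whose weighted time-integral must be shown finite uniformly in $s\le\tau$ from \eqref{AB3}; the exponential factors $e^{c|z(\theta_t\omega)|}$ produced by the multiplicative noise make this bookkeeping more delicate than in the additive case.
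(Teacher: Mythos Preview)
Your proposal is correct and follows essentially the same route as the paper: multiply \eqref{2-M} by $\varrho_k$, project with $\I-\P_i$, test with $\bar\v_{i,2}$, estimate the nonlinear, commutator/forcing, and pressure terms via Gagliardo--Nirenberg, \eqref{poin-i} and \eqref{p-value-N-M}, then apply variation of constants and control the weighted time-integrals through \eqref{AB1}, \eqref{AB3}, \eqref{G3}, \eqref{Z3} and backward temperedness. The only cosmetic discrepancies are that the paper works on $\mathcal{O}_{\sqrt2 k}$ rather than $\mathcal{O}_{2k}$, and in the multiplicative case no standalone $|z(\theta_t\omega)|^m$ source terms actually appear (since there is no $\h z$ contribution), so your $\Theta_j$ list is slightly larger than needed; also the $\sigma z(\theta_t\omega)\|\bar\v_{i,2}\|^2$ term is not absorbed via Poincar\'e but simply carried into the Gronwall weight $\nu\lambda-2\sigma z(\theta_t\omega)$.
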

\begin{proof}
	Multiplying by $\varrho_k$ in the first equation of \eqref{2-M}, we rewrite the equation as:
	\begin{align}\label{FL1-M}
		&\frac{\d\bar{\v}}{\d t}-\nu\Delta\bar{\v}+e^{z(\theta_{t}\omega)}\varrho_k(\v.\nabla)\v+e^{-z(\theta_{t}\omega)}\varrho_k\nabla p\nonumber\\&=e^{-z(\theta_{t}\omega)}\varrho_k\f +\sigma z(\theta_t\omega)\bar{\v}-\nu\v\Delta\varrho_k-2\nu\nabla\varrho_k\cdot\nabla\v.
	\end{align}
Applying $(\I-\P_i)$ to the equation  \eqref{FL1-M} and taking the inner product of the resulting equation with $\bar{\v}_{i,2}$ in $\L^2(\mathcal{O}_{\sqrt{2}k})$, we get
\begin{align}\label{FL2-M}
	&\frac{1}{2}\frac{\d}{\d t}\|\bar{\v}_{i,2}\|^2_{\L^2(\mathcal{O}_{\sqrt{2}k})} +\nu\|\nabla\bar{\v}_{i,2}\|^2_{\L^2(\mathcal{O}_{\sqrt{2}k})}-\sigma z(\theta_{t}\omega)\|\bar{\v}_{i,2}\|^2_{\L^2(\mathcal{O}_{\sqrt{2}k})}\nonumber\\&=-\underbrace{e^{z(\theta_{t}\omega)}\sum_{q,m=1}^{2}\int_{\mathcal{O}_{\sqrt{2}k}}\left(\I-\P_i\right)\bigg[v_{q}\frac{\partial v_{m}}{\partial x_q}\left\{\varrho_k(x)\right\}^2v_{m}\bigg]\d x}_{:=\widehat{J}_1}\nonumber\\&\quad-\underbrace{\left\{\nu\big(\v\Delta\varrho_k,\bar{\v}_{i,2}\big)+2\nu\big(\nabla\varrho_k\cdot\nabla\v,\bar{\v}_{i,2}\big)-\big(e^{-z(\theta_{t}\omega)}\varrho_k\f,\bar{\v}_{i,2}\big)\right\}}_{:=\widehat{J}_2}-\underbrace{\big(e^{-z(\theta_{t}\omega)}\varrho_k(x)\nabla p, \bar{\v}_{i,2}\big)}_{:=\widehat{J}_3}.
\end{align}
Next, we estimate the terms on the right hand side of \eqref{FL2-M} as follows. Using integration by parts, divergence free condition of $\v(\cdot)$, \eqref{poin-i} (WLOG we assume that $\lambda_{i}\geq1$), H\"older's, Gagliardo-Nirenberg's (Theorem 1, \cite{Nirenberg}) and Young's inequalities, we arrive at (see \eqref{FL3}-\eqref{FL6})
\begin{align}
	|\widehat{J}_1|&\leq \frac{\nu}{6}\|\nabla\bar{\v}_{i,2}\|^2_{\L^2(\mathcal{O}_{\sqrt{2}k})}+C\lambda^{-1}_{i+1}e^{6|z(\theta_{t}\omega)|}\|\v\|^8_{\H}+C\lambda^{-1/2}_{i+1}\|\v\|^2_{\V},\label{FL3-M}\\
	|\widehat{J}_2|&\leq \frac{\nu}{6}\|\nabla\bar{\v}_{i,2}\|^2_{\L^2(\mathcal{O}_{\sqrt{2}k})}+ C\lambda^{-1}_{i+1}\bigg[\|\v\|^2_{\V}+e^{2|z(\theta_{t}\omega)|}\|\f\|^2_{\H}\bigg],\label{FL4-M}\\
	|\widehat{J}_3|&\leq \frac{\nu}{6}\|\nabla\bar{\v}_{i,2}\|^2_{\L^2(\mathcal{O}_{\sqrt{2}k})}+C\lambda^{-1/4}_{i+1}\|\v\|^2_{\V}+C\lambda^{-1/2}_{i+1}e^{6|z(\theta_{t}\omega)|}\|\v\|^8_{\H}+C\lambda^{-1/2}_{i+1}e^{2|z(\theta_{t}\omega)|}\|\v\|^4_{\H},\label{FL5-M}
\end{align}
where we have used \eqref{p-value-N-M} in \eqref{FL5-M} also. Now, combining \eqref{FL2-M}-\eqref{FL5-M} and using \eqref{poin} in the resulting inequality, we arrive at
\begin{align}\label{FL6-M}
	&\frac{\d}{\d t}\|\bar{\v}_{i,2}\|^2_{\L^2(\mathcal{O}_{\sqrt{2}k})} +\left(\nu\lambda-2\sigma z(\theta_{t}\omega)\right)\|\bar{\v}_{i,2}\|^2_{\L^2(\mathcal{O}_{\sqrt{2}k})} \nonumber\\&\leq \I_1(i)\|\v\|^2_{\V}+\I_2(i)e^{6|z(\theta_{t}\omega)|}\|\v\|^8_{\H}+\I_3(i)e^{2|z(\theta_{t}\omega)|}\|\v\|^4_{\H}+\I_{5}(i)e^{2|z(\theta_{t}\omega)|}\|\f\|^2_{\H},
\end{align}
where
\begin{align*}
	&\I_1(i)=C\left[\lambda^{-1/4}_{i+1}+\lambda^{-1/2}_{i+1}+\lambda_{i+1}\right], \I_2(i)=C\left[\lambda^{-1/2}_{i+1}+\lambda^{-1}_{i+1}\right], \I_{3}(i)=C\lambda^{-1/2}_{i+1} \text{ and } \I_{4}(i)=C\lambda^{-1}_{i+1}.
\end{align*}
Due to the fact that $\h\in\D(\A)$ and $\lambda_i\to+\infty$ as $i\to+\infty$, we deduce that
\begin{align}\label{i_convergence-M}
	\lim_{i\to+\infty}\I_1(i)=\lim_{i\to+\infty}\I_2(i)=\lim_{i\to+\infty}\I_3(i)=\lim_{i\to+\infty}\I_4(i)=0.
\end{align} In view of the variation of constant formula applied  to \eqref{FL7}, we find
\begin{align}\label{FL8-M}
	&\|(\I-\P_{i})\bar{\v}(s,s-t,\theta_{-s}\omega,\bar{\v}_{0,2})\|^2_{\L^2(\mathcal{O}_{\sqrt{2}k})}\nonumber\\&\leq e^{-\nu\lambda t+4{\aleph}\int^{0}_{-t}\left|z(\theta_{\upeta}\omega)\right|\d\upeta}\|(\I-\P_i)(\varrho_k\v_{0})\|^2_{\L^2(\mathcal{O}_{\sqrt{2}k})}\nonumber\\&\quad+\I_1(i)\underbrace{\int_{s-t}^{s}e^{\nu\lambda(\uprho-s)-2\sigma\int^{\uprho}_{s}z(\theta_{\upeta-s}\omega)\d\upeta}\|\v(\uprho,s-t,\theta_{-s}\omega,\v_{0})\|^2_{\V}\d\uprho}_{\widehat{L}_1(s,t)}\nonumber\\&\quad+\I_2(i)\underbrace{\int_{s-t}^{s}e^{6\left|z(\theta_{\uprho-s}\omega)\right|+\nu\lambda(\uprho-s)-2\sigma\int^{\uprho}_{s}z(\theta_{\upeta-s}\omega)\d\upeta}\|\v(\uprho,s-t,\theta_{-s}\omega,\v_{0})\|^8_{\H}\d\uprho}_{\widehat{L}_2(s,t)}\nonumber\\&\quad+\I_3(i)\underbrace{\int_{s-t}^{s}e^{2\left|z(\theta_{\uprho-s}\omega)\right|+\nu\lambda(\uprho-s)-2\sigma\int^{\uprho}_{s}z(\theta_{\upeta-s}\omega)\d\upeta}\|\v(\uprho,s-t,\theta_{-s}\omega,\v_{0})\|^4_{\H}\d\uprho}_{\widehat{L}_3(s,t)}\nonumber\\&\quad+\I_4(i)\underbrace{\int_{-t}^{0}e^{2\left|z(\theta_{\uprho-s}\omega)\right|+\nu\lambda(\uprho-s)-2\sigma\int^{\uprho}_{s}z(\theta_{\upeta-s}\omega)\d\upeta}\|\f(\uprho+s)\|^2_{\H}\d\uprho}_{\widehat{L}_4(s,t)}.
 \end{align}
It implies from \eqref{Z3}, \eqref{G3} and \eqref{AB1} that
\begin{align}\label{FL9-M}
	\sup_{s\leq \tau}\widehat{L}_1(s,t)<+\infty \ \ \text{ and } \ \ \sup_{s\leq \tau}\widehat{L}_4(s,t)<+\infty,
\end{align}
for sufficiently large $t>0$. Moreover, similar arguments as in \eqref{ep7} provide
\begin{align}\label{FL10-M}
		\sup_{s\leq \tau}\widehat{L}_2(s,t)<+\infty \ \ \text{ and } \ \ \sup_{s\leq \tau}\widehat{L}_3(s,t)<+\infty,
\end{align}
for sufficiently large $t>0$. Further, we have
\begin{align}\label{FL11-M}
	\|(\I-\P_i)(\varrho_k\v_{0})\|^2_{\L^2(\mathcal{O}_{\sqrt{2}k})}\leq C\|\v_{0}\|^2_{\H},
\end{align}
for all $\v_{0}\in D(s-t,\theta_{-t}\omega)$ and $s\leq\tau$. Now, using the definition of backward temperedness \eqref{BackTem}, \eqref{Z3}, \eqref{G3}, Lemma \ref{Absorbing}, \eqref{i_convergence-M}, and \eqref{FL9-M}-\eqref{FL11-M} in \eqref{FL8-M}, we obtain \eqref{FL-P-M}, as required.
\end{proof}

	\subsection{Proof of Theorem \ref{MT1}}\label{thm1.5}
	This subsection is devoted to the main result of this section, that is, the existence of $\mathfrak{D}$-pullback random attractors and their asymptotic autonomy for the solution of the system \eqref{SNSE} with $S(\u)=\u$. The existence of pullback random attractors for non-autonomous SNSE driven by multiplicative noise on unbounded Poincar\'e domains was established in \cite{PeriodicWang}. As the existence of a unique pullback random attractor is known for each $\tau$, one can obtain the existence of a unique random attractor for a  autonomous 2D SNSE driven by multiplicative noise on unbounded Poincar\'e domains (cf. \cite{PeriodicWang}).

	In view of Propositions \ref{Back_conver} and \ref{IRAS}, and Lemmas \ref{largeradius} and \ref{Flattening}, we can prove the Theorem \ref{MT1} by applying similar arguments as in the proof of Theorem \ref{MT1-N}, see Subsection \ref{thm1.4}.

	\medskip\noindent
{\bf Acknowledgments:} Renhai Wang was supported by  China
Postdoctoral Science Foundation under grant numbers 2020TQ0053 and 2020M680456.   Kush Kinra   would like to thank the Council of Scientific $\&$ Industrial Research (CSIR), India for financial assistance (File No. 09/143(0938)/2019-EMR-I).  M. T. Mohan would  like to thank the Department of Science and Technology (DST), Govt of India for Innovation in Science Pursuit for Inspired Research (INSPIRE) Faculty Award (IFA17-MA110).

	\medskip\noindent
{\bf Data availability:} Data sharing not applicable to this article as no datasets were generated or analysed during the current study.

\end{document}